\documentclass{amsart}
\usepackage[utf8]{inputenc}
\usepackage{amssymb}
\usepackage{hyperref}
\usepackage[final]{showkeys} 

\input xy
\xyoption{all}

\theoremstyle{definition}
\newtheorem{mydef}{Definition}[section]
\newtheorem{lem}[mydef]{Lemma}
\newtheorem{thm}[mydef]{Theorem}

\newtheorem{cor}[mydef]{Corollary}

\newtheorem{question}[mydef]{Question}

\newtheorem{defin}[mydef]{Definition}
\newtheorem{example}[mydef]{Example}
\newtheorem{remark}[mydef]{Remark}

\newtheorem{fact}[mydef]{Fact}

\newcommand{\fct}[2]{{}^{#1}#2}



\newcommand{\ba}{\bar{a}}
\newcommand{\bb}{\bar{b}}

\newcommand{\by}{\bar{y}}


\newcommand{\sea}{\mathfrak{C}}

\newcommand{\cf}[1]{\text{cf} (#1)}
\newcommand{\ccl}{\mathbf{cl}}
\newcommand{\cclp}[1]{\ccl_{#1}}
\newcommand{\seq}[1]{\langle #1 \rangle}

\newcommand{\id}{\text{id}}





\newcommand{\leap}[1]{\le_{#1}}
\newcommand{\ltap}[1]{<_{#1}}

\newcommand{\lta}{\ltap{\K}}
\newcommand{\lea}{\leap{\K}}

\newcommand\cq{\mathcal {Q}}
\newcommand\Set{\operatorname{\bf Set}}
\newcommand\colim{\operatorname{colim}}
\newcommand\cp{\mathcal {P}}




\newcommand{\K}{\mathbf{K}}


\newbox\noforkbox \newdimen\forklinewidth
\forklinewidth=0.3pt \setbox0\hbox{$\textstyle\smile$}
\setbox1\hbox to \wd0{\hfil\vrule width \forklinewidth depth-2pt
 height 10pt \hfil}
\wd1=0 cm \setbox\noforkbox\hbox{\lower 2pt\box1\lower
2pt\box0\relax}
\def\unionstick{\mathop{\copy\noforkbox}\limits}


\def\1nf{\unionstick^{(1)}}

\def\2nf{\unionstick^{(2)}}
\def\3nf{\unionstick^{(3)}}




\newcommand{\Ll}{\mathbb{L}}
\newcommand{\otp}{\operatorname{otp}}

\newcommand{\ck}{{\mathcal K}}
\newcommand{\Hilb}{{\bf Hilb}}

\newcommand{\tlt}{\triangleleft}
\newcommand{\tleq}{\trianglelefteq}


\newcommand{\cl}{\ccl}

\newcommand{\LS}{\text{LS}}
\newcommand{\OR}{\text{OR}}

\newcommand{\BI}{\mathbf{I}}
\newcommand{\BJ}{\mathbf{J}}

\newcommand{\SCH}{\text{SCH}}






\newcommand{\Def}{\operatorname{Def}}

\title{Internal sizes in $\mu$-abstract elementary classes}
\date{\today\\
AMS 2010 Subject Classification: Primary 03C48. Secondary: 18C35, 03C45, 03C52, 03C55, 03C75, 03E04, 03E55.
}
\keywords{internal size, presentability rank, $\mu$-AECs admitting intersections, existence spectrum, categoricity spectrum}

\parindent 0pt
\parskip 5pt

\setcounter{tocdepth}{1}


\author[Lieberman]{Michael Lieberman}
\email{lieberman@math.muni.cz}
\urladdr{http://www.math.muni.cz/\textasciitilde lieberman/}
\address{Department of Mathematics and Statistics, Faculty of Science, Masaryk University, Brno, Czech Republic}

\author[Rosick\'y]{Ji\v r\'i Rosick\'y}
\email{rosicky@math.muni.cz}
\urladdr{http://www.math.muni.cz/\textasciitilde rosicky/}
\address{Department of Mathematics and Statistics, Faculty of Science, Masaryk University, Brno, Czech Republic}
\thanks{The first and second authors are supported by the Grant Agency of the Czech Republic under the grant P201/12/G028.}

\author[Vasey]{Sebastien Vasey}
\email{sebv@math.harvard.edu}
\urladdr{http://math.harvard.edu/\textasciitilde sebv/}
\address{Department of Mathematics \\ Harvard University \\ Cambridge, Massachusetts, USA}

\begin{document}

\begin{abstract}
 Working in the context of $\mu$-abstract elementary classes ($\mu$-AECs)---or, equivalently, accessible categories with all morphisms monomorphisms---we examine the two natural notions of size that occur, namely cardinality of underlying sets and \emph{internal size}.  The latter, purely category-theoretic, notion generalizes e.g.\ density character in complete metric spaces and cardinality of orthogonal bases in Hilbert spaces.  We consider the relationship between these notions under mild set-theoretic hypotheses, including weakenings of the singular cardinal hypothesis.  We also establish preliminary results on the existence and categoricity spectra of $\mu$-AECs, including specific examples showing dramatic failures of the eventual categoricity conjecture (with categoricity defined using cardinality) in $\mu$-AECs.
 
\end{abstract}

\maketitle


\section{Introduction}

There are a number of known connections between accessible categories and abstract model theory. Per \cite{lrcaec-jsl}, for example, abstract elementary classes (AECs) and their metric analogues (mAECs) can be characterized as accessible categories with directed colimits, with a faithful functor to the category of sets satisfying certain natural properties.  More recently, \cite{mu-aec-jpaa} develops the notion of a $\mu$-AEC---a generalization of AECs in which closure under unions of chains is replaced with closure under $\mu$-directed colimits---and shows that they are, up to equivalence of categories, precisely the accessible categories all of whose morphisms are monomorphisms.  The model-theoretic motivations for this generalization are manifold: for example, the class of $\mu$-saturated models of a given elementary class need not be an AEC, but is a $\mu$-AEC.  The equivalence between $\mu$-AECs and accessible categories with monomorphisms, moreover, allows the easy application of model-theoretic methods to accessible categories, and vice versa, and serves as the foundation in \cite{multipres-pams} for an emerging set of correspondences between $\mu$-AECs with natural additional properties (universal $\mu$-AECs, $\mu$-AECs admitting intersections) and accessible categories with added structure (locally multipresentable categories, locally polypresentable categories).

The aim of the present paper is to analyze the \emph{existence spectrum} of a general $\mu$-AEC $\ck$ and, to a lesser extent, those satisfying the additional closure conditions of the form described above.  By the existence spectrum, we mean the class of cardinals $\lambda$ such that $\ck$ contains \emph{at least one} object of size $\lambda$. We will also mention the \emph{categoricity spectrum} of $\ck$: the class of cardinals $\lambda$ such that $\ck$ contains \emph{exactly one} object of size $\lambda$, up to isomorphism.

At this point it is essential to note that a $\mu$-AEC $\ck$ comes equipped with \emph{two} natural notions of size: first, and most obviously, the cardinality $|U M|$ of the underlying set $U M$ of an object $M$, but also the {\it internal size} $|M|_{\ck}$ of $M$, which is derived from the purely category-theoretic {\it presentability rank}, $r_{\ck} (M)$, of $M$. Recall that the presentability rank of $M$ is the least regular cardinal $\lambda$ such that for any morphism $f: M \to N$ and any $\lambda$-directed system $\seq{N_i : i \in I}$ whose colimit is $N$, the map $f$ factors essentially uniquely through some $N_i$. Assuming that $|U M| > \LS (\ck)$, as we will throughout the text (even in AECs, the behavior of internal sizes below $\LS (\ck)$ seems hard to control, see Examples \ref{metric-example} or \ref{wo-example}), if $\ck$ has directed colimits (\cite[4.2]{beke-rosicky}) or if we assume GCH (\cite[2.3(5)]{beke-rosicky}) then the presentability rank of $M$ is always a successor cardinal, say $r_\ck(M)=\lambda^+$.  In this case, we define $|M|_\ck=\lambda$.  This notion of internal size matches nicely with the intuitive ones in familiar categories: if $\ck$ is an AEC, then $|M|_\ck=|UM|$, \cite[4.3]{lieberman-categ}; if $\ck$ is the category of Hilbert spaces and isometries, $|M|_\ck$ is the size of an orthonormal basis of $M$, Example \ref{hilbert-example} below; if $\ck$ is a mAEC, $|M|_\ck=\mbox{dc}(UM)$, the density character of the underlying complete metric space of $M$, \cite[3.1]{lrcaec-jsl}; and so on.  The case of mAECs---which are $\aleph_1$-AECs---is instructive: cardinalities and internal sizes will usually disagree when the latter is of countable cofinality.

The crux of this article consists of the analysis of the delicate relationship between these two notions of size, and the differences in the existence and categoricity spectra depending on the particular notion of size that we adopt.  Broadly speaking, we observe that the spectra are much smoother in the case of internal sizes, while gaps and even meaningful failures of eventual categoricity are easily constructed when we work instead with simple cardinality of models.

Let us make a bolder statement. In \cite{sh1019}, Shelah discusses whether there is a real model theory for infinitary quantification, i.e.\ going beyond AECs. We argue that the answer is positive but we should replace cardinalities of underlying sets by category-theoretic internal sizes. We note that this shift has already occurred---naturally, with no input from category theory---in continuous model theory. For example, the classes of complete metric structures considered in \cite{shus837} satisfy a Morley-like eventual categoricity result phrased in terms of the notion of internal size particular to that context, i.e.\ density character (\cite[8.2]{shus837}).

Going back to the present paper, we consider the following questions concerning the existence spectrum:

\begin{enumerate}
\item Under what conditions is the presentability rank a successor?
\item When does a $\mu$-AEC (or, more generally, an accessible category) have an object in all sufficiently large internal sizes?
\item Given an object $M$ of a $\mu$-AEC $\ck$, when can one give a simple description of the internal size of $M$? For example, when does $|U M|$ coincide with $|M|_{\ck}$?
\end{enumerate}

Regarding the first question, we have already mentioned that the presentability rank is a successor in any accessible category provided that GCH holds. In this paper, we weaken the assumption of GCH to SCH (the singular cardinal hypothesis). Since SCH is known to hold above a strongly compact cardinal \cite{solovay-sch} (and indeed above an $\omega_1$-strongly compact \cite{bagaria-magidor}), we obtain from a large cardinal axiom that any sufficiently large presentability rank is a successor. Interestingly, the strongly compact cardinals have been used several times before in the classification theory for AECs \cite{makkaishelah, tamelc-jsl, bg-apal, stab-spec-jml} and the study of $\mu$-AECs \cite{mu-aec-jpaa}. More generally, we give a category-theoretic condition implying that limit presentability rank cannot exist (Theorem \ref{successor-thm}). We show that this condition holds (in ZFC) in any $\mu$-AEC admitting intersections (Corollary \ref{inter-succ}). We do not know of any examples of an accessible category where sufficiently large presentability ranks are not successors.

The second question above was originally asked by Beke and the second author \cite{beke-rosicky}. As there, let us call an accessible category $\ck$ \emph{LS-accessible} if there is a cardinal $\lambda$ such that $\ck$ has objects of every internal size $\lambda' \ge \lambda$. It is open whether a large accessible category is always LS-accessible. Several conditions suffice to guarantee that this holds. For example it is true if the category has products (\cite[4.7(2)]{beke-rosicky}) or coproducts (\cite[4.7(3)]{beke-rosicky}), and true in any $\mu$-AEC provided it has directed colimits (\cite[2.7]{ct-accessible-jsl}).  We show here that, under the assumption of a suitable instance of SCH, any $\mu$-AEC admitting intersections and with arbitrarily large models is LS-accessible (Theorem~\ref{intersec-ls-acc}).  Put another way, under this set-theoretic assumption any large accessible category with wide pullbacks (i.e.\ a large locally polypresentable category) and all morphisms monomorphisms is LS-accessible (see \cite[5.7]{multipres-pams}). We show, moreover, that even without assuming any instance of SCH, such $\mu$-AECs are weakly LS-accessible: they contain an object of internal size $\lambda$ for any sufficiently large \emph{regular} cardinal $\lambda$. In fact, we can show this purely category-theoretically for the locally multipresentable categories (without assuming that all morphisms are monomorphisms), see Theorem \ref{lsacc} in the Appendix.

The third question, namely the relationship between internal size and cardinality, is studied in Section \ref{mu-aecs-sec}. We show there that in a $\mu$-AEC $\ck$ we have $|M|_\ck=|UM|$ (so in particular the presentability rank is a successor) whenever $\lambda=|UM|$ is $\mu$-closed---$\theta^{<\mu}<\lambda$ for all $\theta<\lambda$---but that the sizes may disagree otherwise.  Under GCH (or, indeed, appropriate instances of SCH), this inequality simplifies drastically, yielding Theorem~\ref{internal-size-mu-aec}: If $\lambda=|UM|$ is not $\lambda_0^+$ with $\cf{\lambda_0} < \mu$, then $|M|_\ck=|UM|$, else $|M|_\ck$ is either $\lambda_0$ or $\lambda$. In the case of $\mu$-AECs admitting intersections, one can give an even simpler description of the internal size: it is the minimum cardinality of a subset $A$ of $U M$ such that $M$ is minimal over $A$ (Theorem \ref{internal-cl}).

In Section \ref{examples-sec} we give several examples, primarily concerning the following two generalizations of Shelah's eventual categoricity conjecture to $\mu$-AECS given in Section 6 of \cite{mu-aec-jpaa}:

\begin{question}[Eventual categoricity, cardinality]\label{categ-q-1}
	If a $\mu$-AEC is categorical in \emph{some} sufficiently large cardinal $\lambda$ with $\lambda=\lambda^{<\mu}$, is it categorical in \emph{all} sufficiently large $\lambda'$ such that $\lambda'=(\lambda')^{<\mu}$?
\end{question}
\begin{question}[Eventual categoricity, internal size]\label{categ-q-2}
	If a $\mu$-AEC is categorical in internal size $\lambda$ for some sufficiently large $\lambda$, is it categorical in every sufficiently large internal size?
\end{question}

Shelah's eventual categoricity conjecture for ($\aleph_0$-)AECs is one of the main open questions of classification theory for non-elementary classes and it is natural to ask about broader frameworks than AECs where it may fail. This was the motivation for the two questions above.

In Section \ref{shelah-example}, we carefully analyze an example of Shelah (see the introduction of \cite{sh1019}) in the hope of obtaining a counterexample to Question \ref{categ-q-2}.  The particular $\aleph_1$-AEC involved consists of the class of well-founded models of Kripke-Platek set theory isomorphic to $(L_\alpha, \in)$ such that for all $\beta < \alpha$, $[L_\beta]^{\le \aleph_0} \cap L \subseteq L_\alpha$. This example fails to contradict Question \ref{categ-q-2}, but in an instructive way: while---assuming $V=L$---it is categorical in power in any cardinal of countable cofinality and has many models in all other powers (see Theorem \ref{card-nm}), it is nowhere categorical in the sense of internal sizes (Corollary \ref{internal-nm}). This is because, roughly speaking, some of the abundance of models of cardinality $\lambda^+$ with $\cf{\lambda}=\aleph_0$ drop down to internal size $\lambda$---a concrete illustration of the pathology captured in Theorem~\ref{internal-size-mu-aec}, and a suggestion of the smoothing effect that comes with the passage to internal sizes.  This example doubles, incidentally, as an example of the drastic extent to which the spectrum for categoricity in power of a $\mu$-AEC may depend on the ambient set theory, e.g. on the existence or nonexistence of $0^{\sharp}$.

Note that this example does not contradict Question \ref{categ-q-1} either, as categoricity occurs only in cardinals of countable cofinality.  We can, however, give a very simple and mathematically natural counterexample to Question \ref{categ-q-1}, namely the $\aleph_1$-AEC $\ck$ of Hilbert spaces and isometries (Example \ref{hilbert-example}).  Here the internal sizes correspond, as mentioned above, to the cardinality of orthonormal bases, meaning that $\ck$ is everywhere categorical in the sense of internal sizes.  In terms of cardinalities, though, $\ck$ has (assuming GCH for simplicity) exactly two models of cardinality $\lambda^+$ for each $\lambda$ with $\cf{\lambda}=\aleph_0$---one of internal size $\lambda$ and one of internal size $\lambda^+$---with exactly one model of cardinality $\lambda$ for all other $\lambda$ of uncountable cofinality.  There is, perhaps, a broader lesson to be drawn: the case of Hilbert spaces strongly suggests that \emph{the formulation of eventual categoricity in terms of internal size is, in a sense, the more mathematically honest}: it is a mere quirk of fate that, depending on our ambient set theory, there may be literally as many models as we like of a particular cardinal of uncountable cofinality (GCH limits this number to two), but the essential character of each such model is determined by its internal size, i.e. the size of its basis, which can be pinned down in ZFC.

We assume a knowledge of the basic concepts related to accessible categories and $\mu$-AECs (we still briefly give the relevant background definitions at the beginning of each section). Comprehensive accounts of the former can be found in \cite{adamek-rosicky} and \cite{makkai-pare}, while the current state of knowledge concerning the latter is summarized in \cite{mu-aec-jpaa} and \cite{multipres-pams}.

We thank Will Boney, Rami Grossberg, and the referee, for comments that helped improve this paper. 

\section{Set-theoretic preliminaries}\label{set-theory-sec}

For an infinite cardinal $\lambda$, let $\lambda_r$ denote the least regular cardinal greater than or equal to $\lambda$. That is, $\lambda_r$ is $\lambda^+$ if $\lambda$ is singular or $\lambda$ otherwise. We also define:

\begin{defin}\label{star-def}
$$
  \lambda^\ast := \begin{cases}
    \lambda^+ & \text{if } \lambda \text{ is a successor cardinal} \\
    \lambda & \text{if } \lambda \text{ is a limit cardinal}
  \end{cases}
$$
\end{defin}

The notion of a $\mu$-closed cardinal will be used often:

\begin{defin}\label{sch-def}
  Let $\mu \le \lambda$ be infinite cardinals.
  \begin{enumerate}
  \item We say that $\lambda$ is \emph{$\mu$-closed} if $\theta^{<\mu} < \lambda$ for all $\theta < \lambda$.
  \item We say that $\lambda$ is \emph{almost $\mu$-closed} if $\theta^{<\mu} \le \lambda$ for all $\theta < \lambda$.
  \item For $S$ a class of infinite cardinals greater than or equal to $\mu$, we write $\SCH_{\mu, S}$ for the statement ``every $\lambda \in S$ is almost $\mu$-closed''. $\SCH_{\mu, \ge \lambda}$ has the obvious meaning.
  \item We write $\SCH_{\mu, \lambda}$ for the statement ``There exists a set $S \subseteq \lambda$ of cardinals that is unbounded in $\lambda$ and such that $\SCH_{\mu, S}$ holds''.
  \end{enumerate}
\end{defin}

\begin{remark}\label{sch-rmk}
  Assuming the generalized continuum hypothesis (GCH), an infinite cardinal $\lambda$ is $\mu$-closed (for $\mu$ regular) if and only if $\lambda \ge \mu$ and $\lambda$ is not the successor of a cardinal of cofinality less than $\mu$. Therefore in this case all cardinals greater than or equal to $\mu = 2^{<\mu}$ are almost $\mu$-closed. In fact the singular cardinal hypothesis\footnote{That is, for every infinite singular cardinal $\lambda$, $\lambda^{\cf{\lambda}} = 2^{\cf{\lambda}} + \lambda^+$.} is equivalent to $\SCH_{\mu, \ge 2^{<\mu}}$ for all regular cardinals $\mu$: see \cite[5.22]{jechbook} and Lemma \ref{sch-lem} below. Further if $\kappa$ is a strongly compact cardinal, then by a result of Solovay \cite[20.8]{jechbook}, $\SCH_{\mu, \ge (2^{<\mu} + \kappa)}$ holds for all regular cardinals $\mu$. In fact, this holds even if $\kappa$ is only $\omega_1$-strongly compact \cite[4.2]{bagaria-magidor}. However, it is consistent (assuming a large cardinal axiom) that there is no $\theta$ so that $\SCH_{\aleph_1, \ge \theta}$, see \cite{gch-foreman-woodin}. In fact, for a fixed sufficiently high $\lambda$, even the failure of $\SCH_{\aleph_1, \lambda}$ is consistent \cite[1.13]{gitik-magidor}.
\end{remark}

The following ordering between cardinals is introduced in \cite[2.3.1]{makkai-pare}. The reason for its appearance is that for $\mu \le \lambda$ regular, $\mu \tleq \lambda$ if and only if any $\mu$-accessible category is also $\lambda$-accessible.

\begin{defin}\label{triangle-def}
  Let $\mu \le \lambda$ be regular cardinals. We write $\mu \tlt \lambda$ if $\mu < \lambda$ and for any $\theta < \lambda$, $\cf{[\theta]^{<\mu}, \subseteq} < \lambda$. Here, $[\theta]^{<\mu}$ denotes the set of all subsets of $\theta$ of size less than $\mu$. Write $\mu \tleq \lambda$ if $\mu = \lambda$ or $\mu \tlt \lambda$.
\end{defin}

We will use the following characterization of $\tleq$. One direction appears in \cite[2.3.4]{makkai-pare} and the other in \cite[4.11]{lrcaec-jsl}. We sketch a short proof here for the convenience of the reader.

\begin{fact}\label{triangle-fact}
  Let $\mu < \lambda$ be regular cardinals. If $\lambda$ is $\mu$-closed, then $\mu \tlt \lambda$. Conversely, if $\lambda > 2^{<\mu}$ and $\mu \tlt \lambda$ then $\lambda$ is $\mu$-closed.
\end{fact}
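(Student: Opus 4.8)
The plan is to reduce both implications to the single cardinal identity
$$\theta^{<\mu} = \cf{[\theta]^{<\mu}, \subseteq} \cdot 2^{<\mu},$$
valid for all $\theta \ge \mu$ (recall $\mu$ is regular), and then read off each direction. First I would record the two easy inequalities: since $[\theta]^{<\mu}$ is cofinal in itself, $\cf{[\theta]^{<\mu}, \subseteq} \le |[\theta]^{<\mu}| = \theta^{<\mu}$, and since $\mu \le \theta$ we also have $2^{<\mu} \le \theta^{<\mu}$. The one step with genuine content is the reverse inequality $\theta^{<\mu} \le \cf{[\theta]^{<\mu}, \subseteq} \cdot 2^{<\mu}$: fixing a cofinal family $C \subseteq [\theta]^{<\mu}$ of size $\cf{[\theta]^{<\mu}, \subseteq}$, every $a \in [\theta]^{<\mu}$ is contained in some $c \in C$, hence is a subset of a set of size $<\mu$; as each such $c$ has at most $2^{<\mu}$ subsets, counting the pairs $(c,a)$ with $a \subseteq c$ gives $|[\theta]^{<\mu}| \le |C| \cdot 2^{<\mu}$. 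Since we are multiplying infinite cardinals, the product is just the larger factor, so the identity follows.

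For the forward direction, suppose $\lambda$ is $\mu$-closed. For $\theta < \mu$ the poset $([\theta]^{<\mu}, \subseteq)$ has $\theta$ itself as a maximum, so its cofinality is $1 < \lambda$. For $\mu \le \theta < \lambda$ the easy bound gives $\cf{[\theta]^{<\mu}, \subseteq} \le \theta^{<\mu} < \lambda$, the last inequality by $\mu$-closedness. Together with the hypothesis $\mu < \lambda$, this is precisely the definition of $\mu \tlt \lambda$.

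For the converse, assume $\lambda > 2^{<\mu}$ and $\mu \tlt \lambda$, fix $\theta < \lambda$, and aim to show $\theta^{<\mu} < \lambda$. If $\theta < \mu$, then (using that a product of two cardinals below the regular cardinal $\mu$ stays below $\mu$, so each $\theta^\kappa \le 2^{<\mu}$ for $\kappa < \mu$) we get $\theta^{<\mu} \le 2^{<\mu} < \lambda$. If $\mu \le \theta$, then $\mu \tlt \lambda$ gives $\cf{[\theta]^{<\mu}, \subseteq} < \lambda$ while $2^{<\mu} < \lambda$ by hypothesis; feeding both into the identity and using that $\lambda$ is regular (so a product of two cardinals below $\lambda$ remains below $\lambda$) yields $\theta^{<\mu} < \lambda$. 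Hence $\lambda$ is $\mu$-closed. I expect the main obstacle to be none of this bookkeeping but the content inequality underlying the identity; once it is in hand, both directions are routine consequences of the regularity of $\mu$ and of $\lambda$.
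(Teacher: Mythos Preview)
Your proposal is correct and follows essentially the same approach as the paper: both proofs rest on the identity $\theta^{<\mu} = \cf{[\theta]^{<\mu},\subseteq}\cdot 2^{<\mu}$ for $\mu\le\theta$, with the nontrivial inequality obtained by covering each $a\in[\theta]^{<\mu}$ by a member of a fixed cofinal family and bounding the number of subsets of each such member by $2^{<\mu}$. The paper phrases the latter step as an explicit injection (using the increasing enumeration of each $F$ in the cofinal family to code subsets by binary strings of length $<\mu$), whereas you simply count, but these are the same argument; and where the paper says the fact ``follows easily'' from the identity, you have written out both directions in full, which is fine.
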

\begin{proof}

It is well-known that for $\mu \le \theta$ with $\mu$ regular, we have the following identity, from which the result follows easily:

$$
\theta^{<\mu} = \cf{[\theta]^{<\mu}, \subseteq} \cdot 2^{<\mu}
$$

To see the identity, first observe that $\cf{[\theta]^{<\mu}, \subseteq} \cdot 2^{<\mu} \le \theta^{<\mu} \cdot 2^{<\mu} = \theta^{<\mu}$. Conversely, fix a cofinal family $\mathcal{F} \subseteq [\theta]^{<\mu}$ of cardinality $\cf{[\theta]^{<\mu}, \subseteq}$ and build an injection $f : [\theta]^{<\mu} \to \mathcal{F} \times \fct{<\mu}{2}$ (where $\fct{<\mu}{2}$ is the set of functions from an ordinal $\alpha < \mu$ to $2 = \{0, 1\}$) as follows: given $S \in [\theta]^{<\mu}$, pick $F = F_S \in \mathcal{F}$ such that $S \subseteq F$ and enumerate $F$ in increasing order as $\seq{\alpha_i : i < \otp (F)}$. Then let $\eta_{S, F} : \otp (F) \to 2$ be defined by $\eta_{S, F} (i) = 1$ if $\alpha_i \in S$ and $\eta_{S, F} (i) = 0$ otherwise. Set $f (S) = (F_S, \eta_{S, F})$. It is clear that from $F_S$ and $\eta_{S, F}$ one can recover $S$, hence $f$ is indeed and injection witnessing that $\theta^{<\mu} \le \cf{[\theta]^{<\mu}, \subseteq} \cdot 2^{<\mu}$.
\end{proof}

We end this section with some easy lemmas on computing cardinal exponentiation assuming instances of SCH.

\begin{lem}\label{sch-lem}
  Let $\mu \le \lambda$ be cardinals with $\mu$ regular.

  \begin{enumerate}
  \item If $\cf{\lambda} \ge \mu$, then $\lambda^{<\mu}$ is the least almost $\mu$-closed $\lambda' \ge \lambda$.
  \item If $\cf{\lambda} < \mu$, then $\lambda^{<\mu}$ is the least almost $\mu$-closed $\lambda' > \lambda$.
  \end{enumerate}

  In particular, if $\SCH_{\mu, \{\lambda, \lambda^+\}}$, then 
  \begin{align*}\lambda^{<\mu} = 
  \begin{cases}
  	\lambda & \text{ if }\,\cf{\lambda} \ge \mu\\
  	\lambda^+ & \text{ if }\,\cf{\lambda} < \mu
  \end{cases}	
  \end{align*}
\end{lem}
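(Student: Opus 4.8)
The plan is to write $\kappa := \lambda^{<\mu}$ and establish three things in turn: that $\kappa$ is almost $\mu$-closed, that it sits in the right place relative to $\lambda$ (namely $\kappa \ge \lambda$ always, with $\kappa > \lambda$ when $\cf{\lambda} < \mu$), and finally that it is the \emph{least} almost $\mu$-closed cardinal in the relevant range. The workhorse for the first point is the identity $(\lambda^{<\mu})^{<\mu} = \lambda^{<\mu}$, which holds precisely because $\mu$ is regular: a function from some $\nu < \mu$ into $\lambda^{<\mu}$ amounts to $\nu$-many functions, each defined on some ordinal below $\mu$, and since $\mu$ is regular these domains can be amalgamated into a single function whose domain is still an ordinal below $\mu$. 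Granting this, for any $\theta < \kappa$ we get $\theta^{<\mu} \le \kappa^{<\mu} = \kappa$, so $\kappa$ is almost $\mu$-closed; note this argument is insensitive to $\cf{\lambda}$ and so serves both cases at once.

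For part (1), assume $\cf{\lambda} \ge \mu$. Since $\mu$ is infinite we have $\kappa \ge \lambda^1 = \lambda$, and $\kappa$ is almost $\mu$-closed by the above, so it remains only to prove minimality: if $\lambda'$ is almost $\mu$-closed with $\lambda' \ge \lambda$, then $\kappa \le \lambda'$. I would do this by bounding $\lambda^\nu$ for each $\nu < \mu$ separately. This is the step I expect to be the main obstacle, since it is the only genuinely combinatorial (pcf-flavored) computation and the only place the hypothesis on $\cf{\lambda}$ is used: because $\cf{\lambda} \ge \mu > \nu$, every function from $\nu$ into $\lambda$ is bounded below $\lambda$, giving ${}^\nu\lambda = \bigcup_{\theta < \lambda} {}^\nu\theta$ and hence $\lambda^\nu \le \lambda \cdot \sup_{\theta < \lambda} \theta^\nu$. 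As $\nu < \mu$ and each $\theta < \lambda \le \lambda'$, almost $\mu$-closedness of $\lambda'$ yields $\theta^\nu \le \theta^{<\mu} \le \lambda'$; combined with $\lambda \le \lambda'$ this gives $\lambda^\nu \le \lambda'$. Taking the supremum over $\nu < \mu$ gives $\kappa = \lambda^{<\mu} \le \lambda'$, as desired.

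For part (2), assume $\cf{\lambda} < \mu$. Then K\"onig's theorem gives $\lambda < \lambda^{\cf{\lambda}} \le \lambda^{<\mu} = \kappa$, so $\kappa > \lambda$, and again $\kappa$ is almost $\mu$-closed. Here minimality is immediate and needs no cofinality computation: if $\lambda'$ is almost $\mu$-closed with $\lambda' > \lambda$, then $\lambda < \lambda'$, so applying the definition of almost $\mu$-closedness directly to the cardinal $\lambda$ (which is now below $\lambda'$) gives $\kappa = \lambda^{<\mu} \le \lambda'$. Thus $\kappa$ is the least almost $\mu$-closed cardinal strictly above $\lambda$.

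Finally, for the ``in particular'' clause, assume $\SCH_{\mu, \{\lambda, \lambda^+\}}$, so that both $\lambda$ and $\lambda^+$ are almost $\mu$-closed. If $\cf{\lambda} \ge \mu$, then $\lambda$ is itself an almost $\mu$-closed cardinal $\ge \lambda$, so part (1) forces $\lambda^{<\mu} = \lambda$. If $\cf{\lambda} < \mu$, then $\lambda^+$ is an almost $\mu$-closed cardinal $> \lambda$ and is trivially the least cardinal exceeding $\lambda$, so part (2) forces $\lambda^{<\mu} = \lambda^+$. This completes the plan.
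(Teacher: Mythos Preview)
Your proof is correct and follows essentially the same approach as the paper: both rely on the identity $(\lambda^{<\mu})^{<\mu}=\lambda^{<\mu}$ for almost $\mu$-closedness and on the cofinality hypothesis to write $\lambda^{<\mu}$ as $\sup_{\theta<\lambda}\theta^{<\mu}$ for minimality in part (1). Your write-up is more explicit and uniform---you prove minimality directly against an arbitrary almost $\mu$-closed $\lambda'$, whereas the paper does a case split on whether $\lambda$ itself is almost $\mu$-closed---but the underlying cardinal arithmetic is the same.
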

\begin{proof} \
  \begin{enumerate}
  \item Given that $\cf{\lambda} \ge \mu$, it is easy to check that $\lambda^{<\mu} = \sup_{\theta < \lambda} \theta^{<\mu}$. If $\lambda$ is almost $\mu$-closed then it follows that $\lambda = \lambda^{<\mu}$. If $\lambda$ is \emph{not} almost $\mu$-closed, there exists $\theta < \lambda$ such that $\lambda < \theta^{<\mu}$. Since $\left(\theta^{<\mu}\right)^{<\mu} = \theta^{<\mu}$, it is easy to check that $\theta^{<\mu}$ must be the least almost $\mu$-closed cardinal above $\lambda$ and moreover $\lambda^{<\mu} = \theta^{<\mu}$. 
  \item Check that $\lambda^{<\mu}$ is almost $\mu$-closed.
  \end{enumerate}
\end{proof}

\begin{lem}\label{mu-closed-sch}
  Let $\mu \le \lambda$ be cardinals with $\mu$ regular. The following are equivalent:

  \begin{enumerate}
  \item $\lambda$ is $\mu$-closed.
  \item $\SCH_{\mu, \lambda}$ and $\lambda$ is not the successor of a cardinal of cofinality less than $\mu$.
  \end{enumerate}
\end{lem}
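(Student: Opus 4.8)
The plan is to prove the two implications separately, in both directions leaning on Lemma \ref{sch-lem}, which computes $\theta^{<\mu}$ as the least almost $\mu$-closed cardinal at or above $\theta$, with the answer sensitive to whether $\cf{\theta} \ge \mu$ or $\cf{\theta} < \mu$. For (1) $\Rightarrow$ (2), assume $\lambda$ is $\mu$-closed. To extract $\SCH_{\mu, \lambda}$ I would exhibit a cofinal family of almost $\mu$-closed cardinals below $\lambda$, namely $S = \{\theta^{<\mu} : \theta < \lambda\}$: each $\theta^{<\mu}$ is almost $\mu$-closed because the idempotence $\left(\theta^{<\mu}\right)^{<\mu} = \theta^{<\mu}$ forces $\rho^{<\mu} \le \theta^{<\mu}$ for all $\rho < \theta^{<\mu}$; each $\theta^{<\mu}$ lies strictly below $\lambda$ by $\mu$-closedness; and since $\theta \le \theta^{<\mu} \in S$ the family is cofinal among the cardinals below $\lambda$. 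To see that $\lambda$ is not the successor of a cardinal of cofinality less than $\mu$, suppose toward a contradiction that $\lambda = \lambda_0^+$ with $\cf{\lambda_0} < \mu$. Then Lemma \ref{sch-lem}(2) gives $\lambda_0^{<\mu} > \lambda_0$, hence $\lambda_0^{<\mu} \ge \lambda_0^+ = \lambda$, contradicting $\mu$-closedness applied to $\theta = \lambda_0 < \lambda$.

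For (2) $\Rightarrow$ (1), assume $\SCH_{\mu, \lambda}$ witnessed by a set $S$ of almost $\mu$-closed cardinals cofinal below $\lambda$, and assume $\lambda$ is not the successor of a cardinal of cofinality less than $\mu$. Fixing a cardinal $\theta < \lambda$, I want $\theta^{<\mu} < \lambda$, and I would split on whether $\theta$ is the largest cardinal below $\lambda$. If it is not, there is a cardinal strictly between $\theta$ and $\lambda$, so cofinality of $S$ yields some $\kappa \in S$ with $\theta < \kappa < \lambda$; almost $\mu$-closedness of $\kappa$ then gives $\theta^{<\mu} \le \kappa < \lambda$. If $\theta$ is the largest cardinal below $\lambda$, i.e.\ $\lambda = \theta^+$, then every $\kappa \in S$ satisfies $\kappa \le \theta$, while cofinality applied to the cardinal $\theta$ itself produces $\kappa \in S$ with $\theta \le \kappa$; hence $\theta = \kappa \in S$ is almost $\mu$-closed. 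Moreover, the hypothesis that $\lambda$ is not the successor of a cardinal of cofinality less than $\mu$ gives $\cf{\theta} \ge \mu$, so Lemma \ref{sch-lem}(1) yields $\theta^{<\mu} = \theta < \lambda$, as the least almost $\mu$-closed cardinal $\ge \theta$ is $\theta$ itself.

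The main obstacle—and the reason the cofinality clause appears at all in (2)—is precisely this boundary case $\lambda = \theta^+$ with $\theta$ the top cardinal below $\lambda$: here $S$ cannot supply a \emph{strictly} larger almost $\mu$-closed cardinal, so almost $\mu$-closedness of $\theta$ by itself does not bound $\theta^{<\mu}$ below $\lambda$, and one genuinely needs $\cf{\theta} \ge \mu$ together with Lemma \ref{sch-lem}(1) to force $\theta^{<\mu} = \theta$. For every other $\theta < \lambda$—and therefore for all $\theta$ when $\lambda$ is a limit cardinal, since limit cardinals are never successors—the argument goes through from $\SCH_{\mu, \lambda}$ alone, which is exactly why the extra condition is active only at successor cardinals.
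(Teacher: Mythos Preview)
Your proof is correct and follows essentially the same approach as the paper: both directions lean on Lemma~\ref{sch-lem}, and for $(1)\Rightarrow(2)$ you exhibit the same witnessing set $S=\{\theta^{<\mu}:\theta<\lambda\}$. The only cosmetic difference is in $(2)\Rightarrow(1)$, where the paper splits on whether $\cf{\theta}\ge\mu$ or $\cf{\theta}<\mu$ while you split on whether $\lambda=\theta^+$ or not; these are dual decompositions that invoke the successor-cofinality hypothesis and Lemma~\ref{sch-lem} at the same place.
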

\begin{proof}
  If $\lambda$ is $\mu$-closed, then (in ZFC), $\lambda$ cannot be the successor of a cardinal of cofinality less than $\mu$. Moreover, given $\theta < \lambda$, then $\theta^{<\mu}  < \lambda$ and $\theta^{<\mu}$ is almost $\mu$-closed. Therefore $\SCH_{\mu, \lambda}$. 

  Conversely, assume that $\lambda$ is not the successor of a cardinal of cofinality less than $\mu$ and $\SCH_{\mu, \lambda}$ holds. Let $\theta < \lambda$ be arbitrary. If $\cf{\theta} \ge \mu$, then by Lemma \ref{sch-lem} and the definition of $\SCH_{\mu, \lambda}$,  $\theta^{<\mu} < \lambda$. If $\cf{\theta} < \mu$, then since $\lambda \neq \theta^+$ by assumption we have that $\theta^+ < \lambda$. Thus again by Lemma \ref{sch-lem} and $\SCH_{\mu, \lambda}$, $\theta^{<\mu} < \lambda$. Thus $\lambda$ is $\mu$-closed, as desired.
\end{proof}

\section{Presentability in accessible categories}\label{foundations-sec}

For a cardinal $\mu$, a partially ordered set is called \emph{$\mu$-directed} if each of its subsets of cardinality strictly less than $\mu$ has an upper bound. Note that any non-empty poset is $2$-directed, $\aleph_0$-directed is equivalent to $3$-directed, and $\mu$-directed is equivalent to $\mu_r$-directed. Thus we will usually assume that $\mu$ is a regular cardinal.

For $\lambda$ a regular cardinal, we call an object $M$ of a category $\ck$ \emph{$\lambda$-presentable} if its hom-functor $\ck(M,-):\ck\to\Set$ preserves $\lambda$-directed colimits, i.e. colimits indexed by $\lambda$-directed sets. Put another way, $M$ is $\lambda$-presentable if for any morphism $f:M\to N$ with $N$ a $\lambda$-directed colimit $\langle \phi_\alpha:N_\alpha\to N\rangle$ with diagram maps $\phi_{\beta\alpha}:N_\alpha\to N_\beta$, $f$ factors essentially uniquely through one of the $N_\alpha$.  That is, $f=\phi_\alpha f_\alpha$ for some $f_\alpha:M\to N_\alpha$, and if $f=\phi_\beta f_\beta$ as well, there is $\gamma>\alpha,\beta$ such that $\phi_{\gamma\alpha}f_\alpha=\phi_{\gamma\beta}f_\beta$.

For $\lambda$ an infinite cardinal, we call an object $M$ of a category $\ck$ \emph{$(<\lambda)$-presentable} if it is $\lambda_0$-presentable for some regular $\lambda_0 < \lambda + \aleph_1$. Note that, for $\lambda$ regular, $(<\lambda^+)$-presentable is the same as $\lambda$-presentable. We will use the following key parameterized notions:

\begin{defin}
  Let $\ck$ be a category and let $\mu, \lambda$ be infinite cardinals with $\mu$ regular. A \emph{$(\mu, <\lambda)$-system} is a $\mu$-directed system consisting of $(<\lambda)$-presentable objects. Similarly, define \emph{$(\mu, \lambda)$-system} (for $\lambda$ regular).
\end{defin}

\begin{defin}
  Let $\ck$ be a category and let $\mu, \lambda$ be infinite cardinals with $\mu$ regular. We say an object $M$ of $\ck$ is \emph{$(\mu, <\lambda)$-resolvable} if it is the colimit of a $(\mu, <\lambda)$-system. We say that $\ck$ is \emph{$(\mu, <\lambda)$-resolvable} if all its object are $(\mu, <\lambda)$-resolvable. Similarly, define $(\mu, \lambda)$-resolvable (for $\lambda$ regular).
\end{defin}

To investigate presentability, the following notion is useful:

\begin{defin}\label{proper-def}
  A $(\mu, <\lambda)$-system with colimit $M$ is \emph{proper} if the identity map on $M$ does \emph{not} factor essentially uniquely through an element of the system. That is, if the system has diagram maps $\phi_{\beta \alpha} : M_\alpha \rightarrow M_\beta$ and colimit maps $\phi_{\alpha}: M_\alpha \rightarrow M$, then the following does \emph{not} occur: there exists $\alpha$ and $f_\alpha : M \rightarrow M_\alpha$ such that $\id_{M} = \phi_\alpha f_\alpha$ and whenever $\id_M = \phi_\beta f_\beta$ for $f_\beta : M \rightarrow M_\beta$, then there is $\gamma > \alpha, \beta$ such that $\phi_{\gamma \alpha} f_\alpha = \phi_{\gamma \beta} f_\beta$.
\end{defin}

We can see when a system is proper by looking at the presentability of its colimit. This is is well known and goes back to the fact that split subobjects of $\lambda$-presentables are $\lambda$-presentable (see for example \cite[1.3]{adamek-rosicky}). The proof is included for the convenience of the reader unacquainted with presentability. 

\begin{lem}\label{system-resolvable}
  Let $\mu, \lambda$ be infinite cardinals with $\mu$ regular and let $\ck$ be a category. Let $M \in \ck$ be the colimit of some $(\mu, <\lambda)$-system $\BI$. Then:

  \begin{enumerate}
  \item If $\BI$ is proper, then $M$ is not $\mu$-presentable.
  \item If $M$ is not $(<\lambda)$-presentable, then $\BI$ is proper.
  \end{enumerate}
\end{lem}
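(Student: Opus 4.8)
The plan is to read both parts directly off the definitions, noting that (1) is all but tautological while (2) rests on the standard fact that a retract (split subobject) of a $\lambda_0$-presentable object is again $\lambda_0$-presentable. For (1), observe that the system $\BI$ is itself $\mu$-directed with colimit $M$ and colimit maps $\phi_\alpha : M_\alpha \to M$. If $M$ were $\mu$-presentable, then by definition \emph{every} morphism out of $M$ into a $\mu$-directed colimit would factor essentially uniquely through an object of the diagram; applying this to the identity $\id_M : M \to M$, with the target $M$ presented as the colimit of $\BI$, would force $\id_M$ to factor essentially uniquely through some $M_\alpha$. But that is precisely the configuration which properness of $\BI$ forbids. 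Hence properness of $\BI$ is incompatible with $\mu$-presentability of $M$, giving (1).

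For (2), I argue the contrapositive: if $\BI$ is not proper, then $M$ is $(<\lambda)$-presentable. Failure of properness yields in particular a factorization $\id_M = \phi_\alpha f_\alpha$ for some $\alpha$ and some $f_\alpha : M \to M_\alpha$ (the essential-uniqueness clause in the definition is not even needed here), so that $M$ is a retract of $M_\alpha$. Since $M_\alpha$ is $(<\lambda)$-presentable, say $\lambda_0$-presentable for some regular $\lambda_0 < \lambda + \aleph_1$, it suffices to know that retracts of $\lambda_0$-presentable objects are $\lambda_0$-presentable; then $M$ is $\lambda_0$-presentable, hence $(<\lambda)$-presentable, contradicting the hypothesis. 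To keep the argument self-contained (as the remark preceding the statement promises) rather than merely citing \cite[1.3]{adamek-rosicky}, I would include the short diagram chase for the retract fact: given any $g : M \to N$ with $N$ a $\lambda_0$-directed colimit $\langle \psi_i : N_i \to N \rangle$, factor $g \phi_\alpha : M_\alpha \to N$ essentially uniquely through some $\psi_i$, say $g \phi_\alpha = \psi_i h$; then $g = g\phi_\alpha f_\alpha = \psi_i (h f_\alpha)$ exhibits a factorization of $g$. For essential uniqueness, given two factorizations $g = \psi_j k_j = \psi_{j'} k_{j'}$, precompose each with $\phi_\alpha$ to obtain two factorizations of $g\phi_\alpha$, invoke essential uniqueness at $M_\alpha$ to find $l$ with $\psi_{lj} k_j \phi_\alpha = \psi_{lj'} k_{j'} \phi_\alpha$, and finally postcompose with $f_\alpha$, using $\phi_\alpha f_\alpha = \id_M$, to conclude $\psi_{lj} k_j = \psi_{lj'} k_{j'}$.

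There is no genuine obstacle in this lemma; its content is essentially unwinding the two definitions. The one point demanding care is the transfer of essential uniqueness in the retract argument: one cannot apply $\lambda_0$-presentability of $M_\alpha$ to $g$ directly, but must route the two competing factorizations of $g$ through $\phi_\alpha$ before appealing to presentability, and only afterward cancel $f_\alpha$ on the right. A secondary subtlety worth flagging is that in (2) one uses only the existence half of the failed-factorization in the definition of proper, whereas in (1) it is the full essential-uniqueness statement that is being negated; being explicit about this asymmetry clarifies why the two implications are not simply converses of one another.
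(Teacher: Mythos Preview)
Your argument is correct and essentially identical to the paper's: both parts are proven exactly as you do, with (1) by applying $\mu$-presentability to $\id_M$ and (2) by the contrapositive via the retract-of-a-$\lambda_0$-presentable diagram chase (which the paper also writes out in full rather than merely citing). The only cosmetic slip is that in your essential-uniqueness step you write ``postcompose with $f_\alpha$'' where you mean to compose on the right, i.e.\ form $(\psi_{lj}k_j\phi_\alpha)f_\alpha$; the intent is clear from your use of $\phi_\alpha f_\alpha=\id_M$.
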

\begin{proof} \
  \begin{enumerate}
  \item If $M$ is $\mu$-presentable, then since $\BI$ is $\mu$-directed, $\id_M$ must factor essentially uniquely through an element of $\BI$, so $\BI$ is not proper by definition.
  \item Say $\BI$ has diagram maps $\seq{\phi_{\beta \alpha} : M_\alpha \rightarrow M_\beta, \alpha, \beta \in I}$ and colimit maps $\phi_\alpha : M_\alpha \rightarrow M$. Assume that $\BI$ is not proper and let $f_\alpha : M \rightarrow M_\alpha$ be such that $\phi_\alpha f_\alpha = \id_M$. By hypothesis, $M_\alpha$ is $(<\lambda)$-presentable, hence $\lambda_0$-presentable for some regular $\lambda_0 < \lambda + \aleph_1$. We show that $M$ is $\lambda_0$-presentable. Let $g: M \rightarrow N$, where $N$ is the directed colimit of a $\lambda_0$-directed system with diagram maps $\psi_{\beta \alpha} : N_\alpha \rightarrow N_\beta$ and colimit maps $\psi_{\beta} : N_\beta \rightarrow N$. Since $M_\alpha$ is $\lambda_0$-presentable, the map $g \phi_\alpha$ must factor essentially uniquely through some $N_\beta$, i.e.\ there exists an essentially unique $g_{\beta \alpha} : M_\alpha \rightarrow N_\beta$ so that $\psi_\beta g_{\beta \alpha} = g \phi_\alpha$. Now let $g_\beta := g_{\beta \alpha} f_\alpha$. Then $\psi_\beta g_\beta = \psi_\beta g_{\beta \alpha} f_\alpha = g \phi_\alpha f_\alpha = g \id_M = g$, so $g$ factors through $N_\beta$. Let us now see that $g_\beta$ is essentially unique. Suppose that $g = \psi_{\beta'} h_{\beta'}$ for some $h_{\beta'} : M \rightarrow N_{\beta'}$. Let $h_{\beta' \alpha} := h_\beta \phi_\alpha$. By essential uniqueness of $g_{\beta \alpha}$, we know that there is $\gamma$ above both $\beta$ and $\beta'$ such that $\psi_{\gamma \beta} g_{\beta \alpha} = \psi_{\gamma \beta'} h_{\beta' \alpha}$. We then have that $\psi_{\gamma \beta} g_\beta = \psi_{\gamma \beta} g_{\beta \alpha} f_\alpha = \psi_{\gamma \beta'} h_{\beta' \alpha} f_\alpha = \psi_{\gamma \beta'} h_{\beta'} \phi_{\alpha} f_\alpha = \psi_{\gamma \beta'} h_{\beta'}$, as desired.
  \end{enumerate}
\end{proof}

We can give the following bounds on the presentability of the colimit of a $(\mu, <\lambda)$-system:

\begin{lem}\label{system-rank}
  Let $\mu, \lambda$ be cardinals with $\mu$ regular. Let $\ck$ be a category with $\mu$-directed colimits. Let $\BI$ be a $(\mu, <\lambda)$-system with diagram maps $\phi_{\beta \alpha} : M_\alpha \rightarrow M_\beta$, $\alpha, \beta \in I$, and let $M$ be its colimit. Suppose that for $\alpha \in I$, $M_\alpha$ is $\lambda_\alpha$-presentable, $\lambda_\alpha < \lambda + \aleph_1$. Then:

  \begin{enumerate}
  \item If $\BI$ is proper, then $M$ is not $\mu$-presentable.
  \item $M$ is $\left(|I|^+ + \sup_{\alpha \in I} \lambda_\alpha\right)_r$-presentable. In particular, $M$ is $(|I|^+ + \lambda_r)$-presentable.
  \item If $\cf{\lambda} > |I|$ and $\lambda$ is not the successor of a singular cardinal, then $M$ is $(<(|I|^{++} + \lambda))$-presentable.
  \end{enumerate}
\end{lem}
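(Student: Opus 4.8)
The plan is to reduce all three parts to one standard closure fact: for a regular cardinal $\nu$, a colimit of a $\nu$-small diagram (one with fewer than $\nu$ morphisms) of $\nu$-presentable objects is again $\nu$-presentable (\cite[1.16]{adamek-rosicky}); I will combine this with the monotonicity of presentability, namely that a $\nu$-presentable object is $\nu'$-presentable for every regular $\nu' \ge \nu$ (a $\nu'$-directed colimit being in particular $\nu$-directed). Part (1) then needs no new argument: it is exactly Lemma \ref{system-resolvable}(1), whose hypotheses hold since $M$ is the colimit of the $\mu$-directed system $\BI$.

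For part (2), I would set $\nu := \left(|I|^+ + \sup_{\alpha \in I} \lambda_\alpha\right)_r$. By construction $\nu$ is regular and $\nu \ge |I|^+$, so the index poset has at most $|I| < \nu$ morphisms and the diagram is $\nu$-small; moreover $\nu \ge \lambda_\alpha$ for each $\alpha$, so every $M_\alpha$ is $\nu$-presentable by monotonicity. The closure fact gives that $M$ is $\nu$-presentable. For the ``in particular'' clause, each $\lambda_\alpha < \lambda + \aleph_1$ forces $\lambda_\alpha \le \lambda_r$, hence $\sup_\alpha \lambda_\alpha \le \lambda_r$ and $\nu \le |I|^+ + \lambda_r$; the right-hand side is a maximum of two regular cardinals, hence itself regular, so monotonicity upgrades $\nu$-presentability to $(|I|^+ + \lambda_r)$-presentability.

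Part (3) is the delicate step, and the place where both hypotheses are used. Assuming first $\lambda > \aleph_0$ (the case $\lambda = \aleph_0$, which forces $|I|$ finite, is a direct check giving $\nu = \aleph_0 < \aleph_1$), each $\lambda_\alpha < \lambda + \aleph_1 = \lambda$, so the $\lambda_\alpha$ are fewer than $\cf{\lambda}$ many cardinals (as $|I| < \cf{\lambda}$) below $\lambda$; hence $\sigma := \sup_\alpha \lambda_\alpha < \lambda$. The goal is to show the regular cardinal $\nu = \left(|I|^+ + \sigma\right)_r$ from part (2) satisfies $\nu < |I|^{++} + \lambda + \aleph_1$, which is exactly $(<(|I|^{++}+\lambda))$-presentability. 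Writing $\rho := \max(|I|^+, \sigma)$, I split into two cases. If $\rho$ is regular then $\nu = \rho$, and from $|I|^+ < |I|^{++}$ and $\sigma < \lambda$ we get $\rho < \max(|I|^{++}, \lambda) \le |I|^{++} + \lambda + \aleph_1$. If $\rho$ is singular then, since $|I|^+$ is regular, necessarily $\rho = \sigma > |I|^+$ with $\sigma$ singular, so $\nu = \sigma^+$; here $\sigma < \lambda$ gives $\sigma^+ \le \lambda$, and the hypothesis that $\lambda$ is not the successor of a singular cardinal rules out $\sigma^+ = \lambda$, leaving $\nu = \sigma^+ < \lambda \le |I|^{++} + \lambda + \aleph_1$.

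I expect the main obstacle to be precisely this boundary arithmetic in part (3): the rounding $(\cdot)_r$ can inflate $|I|^+ + \sigma$ to $\sigma^+$, and the only way that value can fail to stay strictly below $|I|^{++} + \lambda + \aleph_1$ is when $\sigma^+ = \lambda$ with $\sigma$ singular --- exactly the configuration the hypotheses forbid. Securing the cofinality bound $\sigma < \lambda$ and then isolating this single critical case is the real content; the rest is the routine application of the closure fact from part (2).
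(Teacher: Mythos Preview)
Your proposal is correct and follows essentially the same approach as the paper. For part (1) you both invoke Lemma \ref{system-resolvable}; for part (2) you cite \cite[1.16]{adamek-rosicky} while the paper reproves that very fact by hand (factor each $g\phi_\alpha$ through some $N_{\gamma_\alpha}$ and take an upper bound $\gamma$); and for part (3) the paper simply says ``follows directly from the second,'' whereas you spell out the cardinal arithmetic---your case split on whether $\rho = |I|^+ + \sigma$ is regular or singular is exactly the computation hidden behind that phrase, and it correctly isolates the one obstruction ($\sigma$ singular with $\sigma^+ = \lambda$) that the hypothesis on $\lambda$ is there to exclude.
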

\begin{proof}
  The first part is by Lemma \ref{system-resolvable}. For the second part, let $\lambda' := \left(|I|^+ + \sup_{\alpha \in I} \lambda_\alpha\right)_r$.
  
  Pick a $\lambda'$-directed system $\BJ$ with objects $\seq{N_\gamma : \gamma \in J}$ and with colimit $N$. Let $g : M \rightarrow N$ be a morphism. For each $\alpha \in I$, $M_\alpha$ is $\lambda_\alpha$-presentable, hence $\lambda'$-presentable, $g$ factors essentially uniquely through some $N_{\gamma_\alpha}$. Let $\gamma$ be an upper bound to all the $\gamma_\alpha$'s (exists since $|I| < \lambda'$ and $J$ is $\lambda'$-directed). Then $g$ factors essentially uniquely through $N_\gamma$, as desired.

  The third part follows directly from the second.
\end{proof}

Using the terminology above, a $\lambda$-accessible category is a category which has $\lambda$-directed colimits, is $(\lambda, \lambda)$-resolvable, and has a set of $\lambda$-presentable objects, up to isomorphism. We will consider the following parameterized generalization:

\begin{defin}
  Let $\kappa \le \mu \le \lambda$ be cardinals with $\kappa$ and $\mu$ regular. A category $\ck$ is \emph{$(\kappa, \mu, <\lambda)$-accessible} if:

  \begin{enumerate}
  \item It has $\kappa$-directed colimits.
  \item It is $(\mu, <\lambda)$-resolvable.
  \item It has only a set (up to isomorphism) of $(<\lambda)$-presentable objects.
  \end{enumerate}

  Define \emph{$(\kappa, \mu, \lambda)$-accessible} similarly. We say that $\ck$ is \emph{$(\mu, <\lambda)$-accessible} if it is $(\mu, \mu, <\lambda)$-accessible.  As noted above, a category $\ck$ is \emph{$\mu$-accessible} just in case it is $(\mu, <\mu^+)$-accessible. We say that $\ck$ is \emph{accessible} if it is $\mu$-accessible for some regular cardinal $\mu$.
\end{defin}
\begin{remark}
  Assume that $\ck$ is $(\kappa, \mu, <\lambda)$-accessible. We have the following monotonicity properties:

  \begin{enumerate}
  \item If $\kappa' \in [\kappa, \mu]$ is regular, then $\ck$ is $(\kappa', \mu, <\lambda)$-accessible.
  \item If $\mu' \in [\kappa, \mu]$ is regular, then $\ck$ is $(\kappa, \mu', <\lambda)$-accessible.
  \item If $\lambda' \ge \lambda$, then $\ck$ is $(\kappa, \mu, <\lambda')$-accessible.
  \end{enumerate}
\end{remark}

The proof of \cite[2.3.10]{makkai-pare} gives the following way of raising the index $\mu$ in the definition of a $(\kappa, \mu, <\lambda)$-accessible category. We sketch a proof for the convenience of the reader:

\begin{fact}\label{triangle-acc}
  Let $\kappa < \mu \le \lambda$ be cardinals with $\kappa$ and $\mu$ regular and $\cf{\lambda} \ge \mu$. Let $\ck$ be a category with $\kappa$-directed colimits. If $M \in \ck$ is $(\kappa, <\lambda)$-resolvable and $\kappa \tlt \mu$ (see Definition \ref{triangle-def}), then $M$ is the colimit of a $\mu$-directed system where each object is a $\kappa$-directed colimit of strictly less than $\mu$-many $(<\lambda)$-presentable objects. In particular, $M$ is $(\mu, <(\lambda + \mu^\ast))$-resolvable (see Definition \ref{star-def}).
\end{fact}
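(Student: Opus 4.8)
The plan is to pass from the given $\kappa$-directed resolution of $M$ to a $\mu$-directed one by organizing the small $\kappa$-directed subsystems into a $\mu$-directed poset. Write $M = \colim_{i \in I} M_i$ for a $(\kappa, <\lambda)$-system $\BI = \seq{M_i : i \in I}$, so $I$ is $\kappa$-directed and each $M_i$ is $(<\lambda)$-presentable, say $\lambda_i$-presentable with $\lambda_i < \lambda + \aleph_1$. Let $J$ be the set of all $\kappa$-directed subsets $S \subseteq I$ with $|S| < \mu$, ordered by inclusion. For each $S \in J$ the restriction of $\BI$ to $S$ is again a $\kappa$-directed system, so it has a colimit $N_S$ (here we use that $\ck$ has $\kappa$-directed colimits), and $N_S$ is by construction a $\kappa$-directed colimit of $|S| < \mu$ many $(<\lambda)$-presentable objects. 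The inclusions $S \subseteq S'$ induce a diagram $\seq{N_S : S \in J}$ together with canonical maps $N_S \to M$.

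The heart of the argument --- and the step where $\kappa \tlt \mu$ is indispensable --- is the closure lemma: every $A \subseteq I$ with $|A| < \mu$ is contained in some $S \in J$. I would prove this by iterating a one-step closure $\kappa$ times. Set $A_0 = A$; given $A_n$ with $|A_n| < \mu$, use $\kappa \tlt \mu$ (Definition \ref{triangle-def}) together with $|A_n| < \mu$ to fix a cofinal family $\mathcal{F}_n \subseteq [A_n]^{<\kappa}$ with $|\mathcal{F}_n| = \cf{[A_n]^{<\kappa}, \subseteq} < \mu$, choose for each $F \in \mathcal{F}_n$ an upper bound $u_F \in I$ (available since $I$ is $\kappa$-directed and $|F| < \kappa$), and put $A_{n+1} = A_n \cup \{u_F : F \in \mathcal{F}_n\}$, taking unions at limit stages. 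Regularity of $\mu$ and $\kappa < \mu$ keep $|A_n| < \mu$ at every stage and give $|S| < \mu$ for $S := \bigcup_{n<\kappa} A_n$. Finally $S$ is $\kappa$-directed: any $B \subseteq S$ with $|B| < \kappa$ lies in some $A_n$ by regularity of $\kappa$, hence in some $F \in \mathcal{F}_n$ by cofinality of $\mathcal{F}_n$, and $u_F \in A_{n+1} \subseteq S$ bounds it.

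With the closure lemma in hand the rest is formal. Given fewer than $\mu$ members $S_t$ of $J$, their union has size $< \mu$ by regularity of $\mu$, and so extends to a member of $J$; thus $J$ is $\mu$-directed. Since every singleton lies in $J$ and $\bigcup_{S \in J} S = I$, the canonical maps $N_S \to M$ exhibit $M$ as $\colim_{S \in J} N_S$ by the usual interchange of colimits: a $\mu$-directed colimit of the subsystem-colimits $N_S = \colim_{i \in S} M_i$, taken over a directed family of subsystems covering $I$, recovers $\colim_{i \in I} M_i = M$. This already yields the first assertion, namely that $M$ is a $\mu$-directed colimit of objects each of which is a $\kappa$-directed colimit of strictly fewer than $\mu$ many $(<\lambda)$-presentables.

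For the ``in particular'' I would bound the presentability of each $N_S$ using Lemma \ref{system-rank}. Since $|S| < \mu \le \cf{\lambda}$ and each $\lambda_i < \lambda$, the hypothesis $\cf{\lambda} \ge \mu$ forces $\sup_{i \in S} \lambda_i < \lambda$, so Lemma \ref{system-rank}(2) makes $N_S$ presentable at $\left(|S|^+ + \sup_{i \in S} \lambda_i\right)_r$. The term $\sup_{i \in S} \lambda_i$ remains below $\lambda$, while $|S|^+$ is at most $\mu$ and is absorbed into $\mu^\ast$ --- the passage from $\mu$ to $\mu^\ast = \mu^+$ in the successor case being exactly what is needed at the boundary $\mu = \lambda$, where $N_S$ can be genuinely $\lambda$-presentable; and when $\lambda$ is not the successor of a singular cardinal one may instead invoke Lemma \ref{system-rank}(3), which gives the bound $|S|^{++} + \lambda \le \lambda + \mu^\ast$ cleanly (the delicate case to watch is precisely $\lambda$ a successor of a singular, where the finer estimate of part (2) and the constraint $\cf{\lambda} \ge \mu$ must be used carefully). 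In every case $N_S$ is $(<(\lambda + \mu^\ast))$-presentable, so $\seq{N_S : S \in J}$ witnesses $(\mu, <(\lambda + \mu^\ast))$-resolvability. I expect the closure lemma to be the only real obstacle: the remaining steps are bookkeeping, but that step is exactly where the combinatorial hypothesis $\kappa \tlt \mu$ enters, through the cofinality bound $\cf{[\theta]^{<\kappa}, \subseteq} < \mu$.
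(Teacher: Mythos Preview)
Your proposal is correct and follows essentially the same route as the paper's (sketched) proof: form the poset $J$ of $\kappa$-directed subsets of $I$ of size $<\mu$, use $\kappa \tlt \mu$ to see that every small subset of $I$ extends to a member of $J$ (hence $J$ is $\mu$-directed), and conclude by interchange of colimits and Lemma~\ref{system-rank}. You supply the details the paper omits---notably the iterative construction behind the closure lemma and the presentability bookkeeping for the ``in particular''---but the underlying strategy is identical.
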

\begin{proof}[Proof sketch]
  Suppose that the $(\kappa, <\lambda)$-system is indexed by $I$ and has colimit $M$. Since $\kappa \tleq \mu$, any subset of $I$ of cardinality strictly less than $\mu$ is contained inside a $\kappa$-directed subset of $I$ of cardinality strictly less than $\mu$. Thus the set of all $\kappa$-directed subsets of $I$ of cardinality strictly less than $\mu$ is $\mu$-directed. It is easy to check that it induces the desired $(\mu, <(\lambda + \mu^\ast))$-system whose colimit is $M$.
\end{proof}

Note that this implies in particular that any $(\kappa, \mu, <\lambda)$-accessible category is $\theta$-accessible for some $\theta$ (for example, $\theta = \left(2^{\lambda^+}\right)^+$ suffices).

By Lemma \ref{system-rank}, any object of an accessible category is $\lambda$-presentable for some regular cardinal $\lambda$. Therefore it makes sense to define:

\begin{defin} Let $\ck$ be an accessible category.
\begin{enumerate}
\item For any $M\in\ck$, the {\it presentability rank of $M$}, denoted $r_{\ck} (M)$, is the least regular cardinal $\lambda$ such that $M$ is $\lambda$-presentable.
\item We denote by $|M|_{\ck}$ the {\it internal size of $M$}. It is defined by:

  $$
  |M|_{\ck} = \begin{cases}
    \mu & \text{ if } r_{\ck} (M) = \mu^+ \\
    r_{\ck} (M) & \text{ if } r_{\ck} (M) \text{ is limit}
  \end{cases}
  $$
  
\end{enumerate}
\end{defin}

The following gives a criteria for when the presentability rank is a successor:

\begin{thm}\label{successor-thm}
  Let $\lambda$ be a regular cardinal and let $\ck$ be a $(\lambda, <\lambda)$-accessible category. If $M \in \ck$ is $\lambda$-presentable, then $M$ is $(<\lambda)$-presentable.
\end{thm}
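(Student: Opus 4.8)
The plan is to exploit the resolvability built into the definition of a $(\lambda, <\lambda)$-accessible category together with the two halves of Lemma \ref{system-resolvable}, which between them pin down exactly when the colimit of a $(\mu, <\lambda)$-system fails to be $(<\lambda)$-presentable. The whole argument amounts to chaining those two implications, so the role of $\lambda$ being regular is simply to license taking $\mu = \lambda$ throughout the parameterized notions.

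First I would unpack condition (2) of $(\lambda, <\lambda)$-accessibility: since $\ck$ is $(\lambda, <\lambda)$-resolvable, the given object $M$ is the colimit of some $(\lambda, <\lambda)$-system $\BI$, that is, of a $\lambda$-directed system of $(<\lambda)$-presentable objects. This is the only structural input I need, and it hands me precisely the kind of system to which Lemma \ref{system-resolvable} applies (with $\mu = \lambda$, which is regular by hypothesis).

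Next I would argue by contradiction. Suppose $M$ is not $(<\lambda)$-presentable. Then Lemma \ref{system-resolvable}(2), applied to the system $\BI$, tells me that $\BI$ is proper. But Lemma \ref{system-resolvable}(1), applied to the same proper system with $\mu = \lambda$, gives that $M$ is not $\lambda$-presentable, contradicting the hypothesis. Hence $M$ must be $(<\lambda)$-presentable, as desired.

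I do not expect a genuine obstacle here: the entire content is already packaged in Lemma \ref{system-resolvable}, whose two parts are exactly the two implications I chain. The only points to verify are bookkeeping — that regularity of $\lambda$ permits setting $\mu = \lambda$, and that resolvability supplies a system with the correct parameters. It is worth noting that neither the existence of $\lambda$-directed colimits (condition (1)) nor the essential smallness of the class of $(<\lambda)$-presentables (condition (3)) is used in this particular argument; they presumably enter elsewhere in the development, so I would resist any temptation to invoke them here.
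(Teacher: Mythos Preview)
Your proposal is correct and essentially identical to the paper's proof: both argue by contradiction, use $(\lambda,<\lambda)$-resolvability to obtain a $(\lambda,<\lambda)$-system with colimit $M$, apply Lemma~\ref{system-resolvable}(2) to conclude the system is proper, and then deduce that $M$ is not $\lambda$-presentable. The only cosmetic difference is that the paper cites Lemma~\ref{system-rank}(1) for the last step, but that part of Lemma~\ref{system-rank} is proved by a one-line appeal to Lemma~\ref{system-resolvable}(1), so the arguments coincide.
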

\begin{proof}
  Otherwise, $M$ would be $(\lambda, <\lambda)$-resolvable (by definition of accessibility) but not $(<\lambda)$-presentable. Therefore there is a proper $(\lambda, <\lambda)$-system whose colimit is $M$ by Lemma \ref{system-resolvable}. By Lemma \ref{system-rank}, $r_{\ck} (M) > \lambda$, contradicting $\lambda$-presentability.
\end{proof}

Note that $(\aleph_0, <\aleph_0)$-accessible is the same as $\aleph_0$-accessible. In addition, if a category is $(\lambda^+, <\lambda^+)$-accessible then it is quite pathological (for example Theorem \ref{successor-thm} says that it cannot have an object of presentability rank $\lambda^+$). Thus we will use Theorem \ref{successor-thm} only when $\lambda$ is weakly inaccessible and uncountable. 

Assuming that every weakly inaccessible cardinal is sufficiently closed, we obtain that the presentability rank is always a successor. This weakens the GCH hypothesis in \cite[2.3(5)]{beke-rosicky} and also shows (Remark \ref{sch-rmk}) that sufficiently large presentability ranks are always successors if there is an $\omega_1$-strongly compact cardinal. Note that we could have obtained the result directly by carefully examining the proof in \cite{beke-rosicky}, but Theorem \ref{successor-thm} is new and can be used even in situations where SCH does not hold (see Corollary \ref{inter-succ}).

\begin{cor}
  Let $\ck$ be a $(\mu, <\lambda)$-accessible category and let $\theta \ge \lambda$ be weakly inaccessible. If $\theta$ is $\mu$-closed, then the presentability rank of an object in $\ck$ cannot be equal to $\theta$.
\end{cor}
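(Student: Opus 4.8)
The plan is to show that the stated hypotheses force $\ck$ to be $(\theta, <\theta)$-accessible, at which point Theorem~\ref{successor-thm} (applied with its $\lambda$ set to $\theta$) forbids an object of presentability rank $\theta$. Concretely, I would argue by contradiction: suppose $M \in \ck$ has $r_\ck(M) = \theta$. Then $M$ is $\theta$-presentable, while $(<\theta)$-presentability of $M$ would mean (since $\theta$ is uncountable, so $\theta + \aleph_1 = \theta$) that $M$ is $\theta_0$-presentable for some regular $\theta_0 < \theta$, i.e.\ $r_\ck(M) < \theta$ — impossible. So it suffices to verify the three clauses of $(\theta,\theta,<\theta)$-accessibility: Theorem~\ref{successor-thm} will then declare the $\theta$-presentable $M$ to be $(<\theta)$-presentable, the desired contradiction.

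From $(\mu,<\lambda)$-accessibility we have $\mu \le \lambda \le \theta$. I would first dispose of the degenerate case $\mu = \theta$, in which $\lambda = \theta$ as well, so that $\ck$ is $(\theta,<\theta)$-accessible by hypothesis; hence assume $\mu < \theta$. Clause~(1), the existence of $\theta$-directed colimits, is immediate: since $\mu \le \theta$, every $\theta$-directed diagram is in particular $\mu$-directed, so the assumed $\mu$-directed colimits supply all $\theta$-directed ones.

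Clause~(2), resolvability, is the heart of the matter. Since $\lambda \le \theta$, every $(<\lambda)$-presentable object is $(<\theta)$-presentable, so $\ck$ is $(\mu,<\theta)$-resolvable. As $\theta$ is $\mu$-closed with $\mu < \theta$ both regular, Fact~\ref{triangle-fact} gives $\mu \tlt \theta$, and $\cf{\theta} \ge \theta$ holds trivially since $\theta$ is regular. Hence Fact~\ref{triangle-acc}, with its triple $(\kappa,\mu,\lambda)$ instantiated as $(\mu,\theta,\theta)$, shows that every object of $\ck$ is $(\theta, <(\theta + \theta^\ast))$-resolvable. Because $\theta$ is weakly inaccessible it is a limit cardinal, so $\theta^\ast = \theta$ by Definition~\ref{star-def} and $\theta + \theta^\ast = \theta$; thus $\ck$ is $(\theta,<\theta)$-resolvable.

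For clause~(3) I would invoke the standard fact that in an accessible category the $\kappa$-presentable objects form, up to isomorphism, a set for every regular $\kappa$ (\cite{adamek-rosicky}). Each $(<\theta)$-presentable object is $\theta_0$-presentable for some regular $\theta_0 < \theta$; ranging over the set-many regular $\theta_0 < \theta$ and taking the union of the corresponding sets shows that $\ck$ has only a set of $(<\theta)$-presentable objects. This completes the verification of $(\theta,<\theta)$-accessibility, and with it the proof. The main obstacle is clause~(2): both hypotheses on $\theta$ are consumed there — $\mu$-closedness yields $\mu \tlt \theta$, which is what lets Fact~\ref{triangle-acc} raise the resolvability index to $\theta$, while weak inaccessibility (limitness) is what collapses $\theta + \theta^\ast$ back to $\theta$. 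Without $\mu$-closedness one cannot in general push resolvability up to index $\theta$, which is precisely why the conclusion can fail for non-$\mu$-closed $\theta$.
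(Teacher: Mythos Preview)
Your proposal is correct and follows essentially the same route as the paper's proof: both reduce to showing $\ck$ is $(\theta,<\theta)$-accessible via Fact~\ref{triangle-fact} (to obtain $\mu \tlt \theta$) and Fact~\ref{triangle-acc} (to raise the resolvability index), then invoke Theorem~\ref{successor-thm}. The paper compresses this into three lines, while you spell out each accessibility clause and separately treat the boundary case $\mu = \theta$; the substance is identical.
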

\begin{proof}
  By Fact \ref{triangle-fact}, $\mu \tlt \theta$. Note that $\ck$ is in particular $(\mu, <\theta)$-accessible so by Fact \ref{triangle-acc} also $(\theta, <\theta)$-accessible. Now apply Theorem \ref{successor-thm}.
\end{proof}

Note that a weakly inaccessible $\theta > \mu$ is $\mu$-closed if and only if $\SCH_{\mu, \theta}$ holds (see Lemma \ref{mu-closed-sch}). This is quite a weak assumption, although its failure is still consistent (assuming quite large cardinals, see Remark \ref{sch-rmk}). Note also that when $\mu = \aleph_0$, $\theta$ is always $\mu$-closed so we recover \cite[4.2]{beke-rosicky}: sufficiently large presentability ranks are successors in any accessible category with directed colimits. Indeed, a $\lambda$-accessible category with directed colimits is, in particular, an $(\aleph_0, <\lambda^+)$-accessible category.

We now move to the study of accessible categories whose morphisms are monomorphisms. Recall (Fact \ref{mu-aec-acc}) that these are the same as $\mu$-AECs. The following gives a criteria for when an object of a certain presentability rank exists:

\begin{thm}\label{system-existence-thm}
  Let $\mu < \lambda$ be cardinals with $\mu$ regular, $\cf{\lambda} > \mu$, and $\lambda$ not the successor of a singular cardinal. Let $\ck$ be a category with $\mu$-directed colimits and all morphisms monomorphisms. 

  \begin{enumerate}
  \item If $\ck$ has a proper $(\mu, <\lambda)$-system, then there exists an object $M \in \ck$ such that $\mu < r_{\ck} (M) < \lambda + \mu^{++}$.
  \item If $\ck$ is $(\mu, <\lambda)$-accessible and has an object that is not $\mu$-presentable, then there exists $M \in \ck$ with $\mu < r_{\ck} (M) < \lambda + \mu^{++}$.
  \end{enumerate}
\end{thm}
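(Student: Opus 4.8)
The plan is to reduce everything to producing a \emph{small} proper subsystem, exploiting heavily that all morphisms are monomorphisms. The basic observation is that in such a category a split epimorphism is automatically an isomorphism: if $\phi f = \id$ and $\phi$ is a (left-cancellable) monomorphism, then from $\phi(f\phi) = \phi$ we get $f\phi = \id$, so $\phi$ is invertible. Applying this to the colimit maps $\phi_\alpha \colon M_\alpha \to M$ of a $(\mu,<\lambda)$-system, we see that $\id_M$ factors through some $M_\alpha$ precisely when $\phi_\alpha$ is an isomorphism, and in that case the factorization is automatically essentially unique. Hence, rephrasing Definition \ref{proper-def}: the system is proper if and only if none of its colimit maps $\phi_\alpha$ is an isomorphism. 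This concrete description is what I would use throughout.

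For part (1), let $M$ be the colimit of the given proper $(\mu,<\lambda)$-system $\BI$. By Lemma \ref{system-rank}(1), $M$ is not $\mu$-presentable, so $r_\ck(M) > \mu$; the only danger is that $r_\ck(M)$ is too large. I would split into two cases. If $M$ happens to be $(<\lambda)$-presentable, then $M$ itself works, since $\mu < r_\ck(M) < \lambda + \aleph_1 \le \lambda + \mu^{++}$. So assume $M$ is not $(<\lambda)$-presentable; in particular $M$ is not isomorphic to any $M_\alpha$, as the latter are all $(<\lambda)$-presentable.

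The heart of the argument, and the step I expect to be the main obstacle, is to extract from $\BI$ a proper subsystem whose index set has size only $\mu$. The naive move---closing a $\mu$-sized subset of the index poset $I$ off under upper bounds of its $(<\mu)$-subsets---produces a $\mu$-directed set of size $\mu^{<\mu}$, which may be strictly larger than $\mu$ and would spoil the bound coming from Lemma \ref{system-rank}(3). The trick I would use to avoid this is to build a \emph{chain} rather than a general $\mu$-directed subset: a strictly increasing sequence $\seq{\alpha_i : i < \mu}$ in $I$ is a set of size $\mu$, and since $\mu$ is regular every subset of it of size $<\mu$ is bounded, so it is $\mu$-directed. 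I construct it by recursion so that each consecutive colimit map $M_{\alpha_i}\to M_{\alpha_{i+1}}$ fails to be an isomorphism. At successor stages this is possible because, if $M_{\alpha_i}\to M_\beta$ were an isomorphism for every $\beta \ge \alpha_i$, then, as $\{\beta : \beta \ge \alpha_i\}$ is cofinal, $M$ would be isomorphic to $M_{\alpha_i}$, hence $(<\lambda)$-presentable, contrary to the case assumption; at limit stages I simply take an upper bound, available by $\mu$-directedness of $I$. Writing $M^\ast$ for the colimit of the resulting chain, the split-epi-is-iso observation shows that each $M_{\alpha_i}\to M^\ast$ is non-iso (a factorization $M_{\alpha_i}\to M_{\alpha_{i+1}}\to M^\ast$ with iso composite would force the first map to be iso), so this subsystem is proper.

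It then remains to read off the bounds. By Lemma \ref{system-rank}(1) the proper subsystem has non-$\mu$-presentable colimit, so $r_\ck(M^\ast) > \mu$; and since its index set has size $\mu < \cf{\lambda}$ and $\lambda$ is not the successor of a singular cardinal, Lemma \ref{system-rank}(3) gives that $M^\ast$ is $(<(\mu^{++}+\lambda))$-presentable, i.e.\ $r_\ck(M^\ast) < \lambda + \mu^{++}$. This $M^\ast$ witnesses part (1). Finally, part (2) reduces to part (1): by $(\mu,<\lambda)$-resolvability I write the non-$\mu$-presentable object $X$ as the colimit of a $(\mu,<\lambda)$-system. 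If that system is proper, part (1) applies directly. If it is not proper, then by the observation above $X$ is isomorphic to one of the $(<\lambda)$-presentable objects of the system, so $X$ is itself $(<\lambda)$-presentable while remaining non-$\mu$-presentable, whence $\mu < r_\ck(X) < \lambda + \aleph_1 \le \lambda+\mu^{++}$ and $X$ works.
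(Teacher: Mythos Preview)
Your proof is correct and follows the same route as the paper: extract from the given proper $(\mu,<\lambda)$-system a proper subsystem indexed by a chain of length $\mu$, then apply Lemma \ref{system-rank}. The paper compresses this to a single sentence (``using that all morphisms are monomorphisms, one can fix a subsystem of it indexed by a chain of length $\mu$ which is also proper''), whereas you spell out the split-epi-plus-mono-is-iso observation, the reformulation of properness, and the recursive construction of the chain---all of which are exactly the details being suppressed. Two cosmetic remarks: your case split in part (1) is not needed, since properness alone (not the stronger assumption that $M$ is not $(<\lambda)$-presentable) already guarantees that for every $\alpha$ there is $\beta \ge \alpha$ with $M_\alpha \to M_\beta$ non-iso; and in part (2) your treatment of the non-proper case is just a direct unpacking of Lemma \ref{system-resolvable}(2), which the paper invokes by name.
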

\begin{proof} \
  \begin{enumerate}
  \item Fix a proper $(\mu, <\lambda)$-system. Using that all morphisms are monomorphisms, one can fix a subsystem of it indexed by a chain of length $\mu$ which is also proper. Now its colimit is as desired by Lemma \ref{system-rank}.
  \item Pick $M \in \ck$ that is not $\mu$-presentable. By definition of accessibility, $M$ is $(\mu, <\lambda)$-resolvable. If $M$ is $(<\lambda)$-presentable, we are done so assume that $M$ is not $(<\lambda)$-presentable. By Lemma \ref{system-resolvable}, there is a proper $(\mu, <\lambda)$-system whose colimit is $M$. By the previous part, $\ck$ has an object of presentability rank strictly between $\mu$ and $\lambda + \mu^{++}$, as desired.
  \end{enumerate}
\end{proof}

Note the following special case:

\begin{cor}\label{mono-special-case}
  Let $\ck$ be a $(\lambda, \lambda^+)$-accessible category with all morphisms monomorphisms. If $\ck$ has an object that is not $\lambda$-presentable, then $\ck$ has an object of presentability rank $\lambda^+$.
\end{cor}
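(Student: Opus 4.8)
The plan is to derive this as an immediate application of Theorem~\ref{system-existence-thm}(2), after translating the hypothesis into the ``$(<\cdot)$'' notation in which that theorem is stated. First I would observe that being $(\lambda, \lambda^+)$-accessible is literally the same condition as being $(\lambda, <\lambda^{++})$-accessible. Indeed, since $\lambda^+$ is regular, the remark following the definition of $(<\lambda)$-presentability (namely that $(<\mu^+)$-presentable coincides with $\mu$-presentable for regular $\mu$) applied with $\mu = \lambda^+$ shows that $(<\lambda^{++})$-presentable is the same as $\lambda^+$-presentable. Hence $(\lambda, \lambda^+)$-systems and $(\lambda, <\lambda^{++})$-systems coincide, the two resolvability conditions coincide, and the two ``only a set of presentable objects'' conditions coincide; the $\lambda$-directed colimit clause is shared. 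So $\ck$ is $(\lambda, <\lambda^{++})$-accessible.

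Next I would invoke Theorem~\ref{system-existence-thm}(2) with its parameter $\mu$ set equal to $\lambda$ and its parameter $\lambda$ set equal to $\lambda^{++}$, and check the hypotheses one by one. We have $\mu = \lambda < \lambda^{++}$ with $\lambda$ regular; the cofinality requirement $\cf{\lambda^{++}} > \lambda$ holds because $\lambda^{++}$ is regular and above $\lambda$; and $\lambda^{++}$ is not the successor of a singular cardinal, since it is the successor of $\lambda^+$, which is a successor cardinal and therefore regular. On the categorical side, $\ck$ has $\lambda$-directed (hence $\mu$-directed) colimits and all its morphisms are monomorphisms by hypothesis, it is $(\lambda, <\lambda^{++})$-accessible by the previous paragraph, and it has an object that is not $\lambda$-presentable ($=\mu$-presentable) by assumption. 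Thus all hypotheses of part (2) are satisfied.

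The theorem then yields $M \in \ck$ with $\lambda < r_{\ck}(M) < \lambda^{++} + \lambda^{++}$. Since $\lambda^{++} + \lambda^{++} = \lambda^{++}$, this says $\lambda < r_{\ck}(M) < \lambda^{++}$, and the only cardinal strictly between $\lambda$ and $\lambda^{++}$ is $\lambda^+$; hence $r_{\ck}(M) = \lambda^+$, which is exactly the desired object of presentability rank $\lambda^+$. There is no substantial obstacle here, as the statement is a direct specialization of Theorem~\ref{system-existence-thm}. The only two points requiring a moment's care are the bookkeeping that identifies $(\lambda, \lambda^+)$-accessibility with $(\lambda, <\lambda^{++})$-accessibility, and the verification that $\lambda^{++}$ satisfies the ``not the successor of a singular'' hypothesis --- after which the cardinal arithmetic $\lambda^{++} + \lambda^{++} = \lambda^{++}$ collapses the output interval precisely onto the single cardinal $\lambda^+$.
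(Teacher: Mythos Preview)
Your proposal is correct and follows exactly the paper's approach: the paper's proof is the single line ``Apply the second part of Theorem~\ref{system-existence-thm} with $(\lambda, \lambda^{++})$ in place of $(\mu, \lambda)$,'' and you have simply unpacked this, verifying the hypotheses and the cardinal arithmetic in detail.
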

\begin{proof}
  Apply the second part of Theorem \ref{system-existence-thm} with $(\lambda, \lambda^{++})$ in place of $(\mu, \lambda)$.
\end{proof}

We deduce that, when all morphisms are monomorphisms and the category is large, the accessibility spectrum (the set of cardinals $\lambda$ such that the category is $\lambda$-accessible) is contained in the existence spectrum. In particular, if such a category is well-accessible (in the sense that its accessibility spectrum is a tail, see \cite[2.1]{beke-rosicky}) then it is weakly LS-accessible: it has an object of every \emph{regular} internal size (Definition \ref{ls-acc-def}).

\begin{cor}
  A large $\lambda$-accessible category all of whose morphisms are monomorphisms has an object of internal size $\lambda$. 
\end{cor}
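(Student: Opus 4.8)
The plan is to reduce everything to Corollary~\ref{mono-special-case}: that result already delivers an object of presentability rank $\lambda^+$ provided $\ck$ is $(\lambda,\lambda^+)$-accessible, has all morphisms monomorphisms, and contains some object that is not $\lambda$-presentable. Since $\lambda$ is regular (being an accessibility index) and $\lambda^+$ is a successor cardinal, such an object $M$ will satisfy $r_\ck(M)=\lambda^+$, whence $|M|_\ck=\lambda$ by the definition of internal size, which is exactly the desired conclusion. So the work reduces to verifying the three hypotheses of Corollary~\ref{mono-special-case}; the monomorphism hypothesis is given outright.

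First I would upgrade the index of accessibility. By definition a $\lambda$-accessible category is $(\lambda,\lambda,<\lambda^+)$-accessible. Applying the third monotonicity property from the remark following the definition of $(\kappa,\mu,<\lambda)$-accessibility, with $\lambda'=\lambda^{++}\ge\lambda^+$, I obtain that $\ck$ is $(\lambda,\lambda,<\lambda^{++})$-accessible; and since $\lambda^+$ is regular, this is precisely $(\lambda,\lambda^+)$-accessibility. Next I would produce the required non-$\lambda$-presentable object from largeness: a $\lambda$-accessible category has, up to isomorphism, only a set of $\lambda$-presentable objects, so were every object $\lambda$-presentable the category would be essentially small, contradicting the hypothesis that it is large. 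Hence some object of $\ck$ fails to be $\lambda$-presentable.

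With $\ck$ now recognized as $(\lambda,\lambda^+)$-accessible, with all morphisms monomorphisms and with a non-$\lambda$-presentable object in hand, Corollary~\ref{mono-special-case} yields an object $M$ of presentability rank $\lambda^+$, and the computation of internal size above completes the proof.

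I do not expect a serious obstacle, as the argument is an assembly of the preceding results. The one step carrying genuine content is the raising of the presentability bound: the monotonicity property invoked there silently includes the assertion that, on enlarging the bound from $\lambda^+$ to $\lambda^{++}$, there remain only a set of presentable objects of the relevant rank. This is the standard accessibility-theoretic fact that the objects of any fixed presentability rank form a set up to isomorphism; if a fully self-contained treatment were wanted, I would isolate and prove this as a preliminary lemma rather than leaning on the remark.
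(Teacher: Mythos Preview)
Your proposal is correct and follows exactly the approach the paper intends: the paper's proof is simply ``Immediate from Corollary~\ref{mono-special-case},'' and you have correctly unpacked what ``immediate'' means here---verifying $(\lambda,\lambda^+)$-accessibility via monotonicity, extracting a non-$\lambda$-presentable object from largeness, and reading off the internal size from the presentability rank $\lambda^+$.
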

\begin{proof}
  Immediate from Corollary \ref{mono-special-case}.
\end{proof}

We deduce a general result on the existence spectrum of a large accessible category whose morphisms are monomorphisms.

\begin{cor}\label{mono-internal-size}
  Let $\mu \le \lambda$ both be regular cardinals and let $\ck$ be a $(\mu, \lambda^+)$-accessible category with all morphisms monomorphisms. If $\ck$ has an object that is not $\lambda$-presentable, then there is $M \in \ck$ such that $\lambda \le |M|_{\ck} \le \lambda^{<\mu}$.
\end{cor}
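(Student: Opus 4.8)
The plan is to produce $M$ by starting from a (possibly very large) non-$\lambda$-presentable object and shrinking it to internal size at most $\lambda^{<\mu}$ while keeping it non-$\lambda$-presentable. Two reductions come first. If $\mu=\lambda$, then $\ck$ is already $(\lambda,\lambda^+)$-accessible, so Corollary \ref{mono-special-case} gives an object of presentability rank $\lambda^+$, i.e.\ internal size $\lambda\in[\lambda,\lambda^{<\mu}]$; hence assume $\mu<\lambda$. Next, fix an object $N$ that is not $\lambda$-presentable and, using $(\mu,\lambda^+)$-accessibility, write it as the colimit of a $(\mu,\lambda^+)$-system $\langle N_i:i\in I\rangle$. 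If some $N_i$ is not $\lambda$-presentable, then $N_i$ is $\lambda^+$-presentable but not $\lambda$-presentable, so $r_{\ck}(N_i)=\lambda^+$ and $|N_i|_{\ck}=\lambda$, and we are done. So assume every $N_i$ is $\lambda$-presentable; then $\langle N_i:i\in I\rangle$ is a $(\mu,\lambda)$-system whose colimit $N$ is not $(<\lambda^+)$-presentable, hence proper by Lemma \ref{system-resolvable}(2).

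The core is a downward L\"owenheim--Skolem step, and the essential bookkeeping is that a $\mu$-directed subset of $I$ of cardinality at most $\lambda^{<\mu}$ has $\mu$-directed closure again of cardinality at most $(\lambda^{<\mu})^{<\mu}=\lambda^{<\mu}$. Thus the family $\mathcal{D}$ of all $\mu$-directed $J\subseteq I$ with $|J|\le\lambda^{<\mu}$ is $(\lambda^{<\mu})^+$-directed, covers $I$, and satisfies $N=\colim_{J\in\mathcal{D}}M_J$, where $M_J:=\colim_{i\in J}N_i$. By Lemma \ref{system-rank}(2), each $M_J$ is $(|J|^++\lambda)$-presentable, hence $(\lambda^{<\mu})^+$-presentable, so $|M_J|_{\ck}\le\lambda^{<\mu}$. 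It therefore suffices to find a single $J\in\mathcal{D}$ with $M_J$ not $\lambda$-presentable, for then $\lambda<r_{\ck}(M_J)\le(\lambda^{<\mu})^+$, whence $\lambda\le|M_J|_{\ck}\le\lambda^{<\mu}$ and $M=M_J$ works.

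To locate such a $J$ I would argue by a dichotomy on the $(\lambda^{<\mu})^+$-directed system $\langle M_J:J\in\mathcal{D}\rangle$ with colimit $N$. If this system is \emph{not} proper, then $\id_N$ factors through some $M_J$, exhibiting $N$ as a retract of $M_J$; since split subobjects of $\lambda$-presentable objects are $\lambda$-presentable and $N$ is not $\lambda$-presentable, neither is $M_J$, and this $M_J$ is as desired. The remaining, proper, case — in which every $M_J$ is $\lambda$-presentable yet $N$ is not — is the crux: it asserts exactly that a $(\lambda^{<\mu})^+$-directed colimit of $\lambda$-presentable objects (with $(\lambda^{<\mu})^+>\lambda$) is itself $\lambda$-presentable, contradicting the choice of $N$. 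Here the hypothesis that all morphisms are monomorphisms is indispensable: passing to the underlying $\mu$-AEC, colimits are honest $\mu$-directed unions, so a witness $g:N\to P$ to non-$\lambda$-presentability ($P$ a $\lambda$-directed colimit through which $g$ does not factor) can be analyzed stage-by-stage. Each $g\rest M_J$ factors through a single stage of $P$, and $(\lambda^{<\mu})^+$-directedness together with the regularity of the relevant index forces these stages to be bounded, so that $g$ factors after all — the sought contradiction.

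The main obstacle is precisely this last bounding step: the naive argument breaks when the witnessing $\lambda$-directed system has cofinality exceeding $\lambda^{<\mu}$, and controlling this genuinely needs the concrete union structure of a $\mu$-AEC rather than formal accessibility alone (an order-theoretic ``accessible'' category with a gap in its chain of presentable objects violates the statement, so monomorphisms cannot be dropped). It is convenient to package the whole proper case via a minimal counterexample: taking $N$ of least presentability rank among non-$\lambda$-presentable objects, every $M_J$ has strictly smaller rank and is therefore $\lambda$-presentable, reducing the corollary to the single claim about sufficiently directed colimits of $\lambda$-presentables. Finally, the special case in which $\lambda$ is $\mu$-closed admits a cleaner route avoiding the shrinking altogether: then $\mu\tlt\lambda$ by Fact \ref{triangle-fact}, so Fact \ref{triangle-acc} upgrades $\ck$ to a $(\lambda,\lambda^+)$-accessible category and Corollary \ref{mono-special-case} yields internal size $\lambda=\lambda^{<\mu}$ directly.
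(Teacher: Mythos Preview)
Your argument has a real gap in the proper case, which you yourself flag as ``the main obstacle'' and do not close. The stage-by-stage bounding you sketch fails: the $\lambda$-directed system defining $P$ can have cofinality far exceeding $\lambda^{<\mu}$, so there is no reason the stages $\alpha_J$ are bounded. The bare claim that a $(\lambda^{<\mu})^+$-directed colimit of $\lambda$-presentables is $\lambda$-presentable is in fact false even with all morphisms mono: in the category of linear orders with initial-segment embeddings, $(\omega_2,\in)$ is the $\aleph_2$-directed colimit of its initial segments of countable cofinality---each of which is $\aleph_1$-presentable, since in that category $\aleph_1$-presentable means cofinality at most $\aleph_0$---yet $(\omega_2,\in)$ itself has cofinality $\aleph_2$. (That category fails the set-of-presentables condition, so this does not contradict the corollary; it only shows your isolated claim needs more input than monomorphisms.) The minimal-counterexample reformulation changes nothing: it merely restates the same unproven assertion.

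The missing idea is chain extraction, and in fact your closing remark is already the whole proof once applied to the right cardinal. The paper sets $\theta:=(\lambda^{<\mu})^+$, which is \emph{always} $\mu$-closed; then $\mu\tlt\theta$ by Fact~\ref{triangle-fact}, Fact~\ref{triangle-acc} raises $\ck$ from $(\mu,\lambda^+)$-accessible to $(\lambda,\theta)$-accessible, and Theorem~\ref{system-existence-thm}(2) (with $(\lambda,\theta^+)$ standing for its $(\mu,\lambda)$) produces $M$ with $\lambda<r_\ck(M)<\theta^+$, i.e.\ $\lambda\le|M|_\ck\le\lambda^{<\mu}$. Inside Theorem~\ref{system-existence-thm} the work is done by passing to a proper $\lambda$-indexed subchain and invoking Lemma~\ref{system-rank}; one can equally patch your argument directly by building a strictly increasing $\lambda$-chain $J_0\subsetneq J_1\subsetneq\cdots$ inside $\mathcal{D}$, noting that $J^\ast:=\bigcup_{\xi<\lambda}J_\xi$ again lies in $\mathcal{D}$, and observing that $M_{J^\ast}$ would then be both $\lambda$-presentable (by the case hypothesis) and not (as the colimit of a proper $\lambda$-chain). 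You had the right tool but deployed it only when $\lambda$ itself is $\mu$-closed, missing that $(\lambda^{<\mu})^+$ always is.
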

\begin{proof}
  Let $\theta := \left(\lambda^{<\mu}\right)^+$. Note that $\theta$ is $\mu$-closed, so by Fact \ref{triangle-fact}, $\mu \tlt \theta$. By Fact \ref{triangle-acc} , $\ck$ is $(\mu, \theta, \theta)$-accessible, hence $(\lambda, \theta)$-accessible. By the second part of Theorem \ref{system-existence-thm} (where $(\mu, \lambda)$ there stands for $(\lambda, \theta^+)$ here), there exists $M \in \ck$ with $\lambda < r_{\ck} (M) < \theta^+ + \lambda^{++} = \theta^+$. Thus $\lambda \le |M|_{\ck} < \theta$, i.e.\ $\lambda \le |M|_{\ck} \le \lambda^{<\mu}$, as desired.
\end{proof}

We will see later (Corollary \ref{internal-size-cor-2}) that the assumption of regularity of $\lambda$ can be relaxed assuming SCH.

\section{Presentability in $\mu$-AECs}\label{mu-aecs-sec}

Recall from \cite[\S2]{mu-aec-jpaa} that a \emph{($\mu$-ary) abstract class} is a pair $\K = (K, \lea)$ such that $K$ is a class of structures is a fixed $\mu$-ary vocabulary $\tau = \tau (\K)$, and $\lea$ is a partial order on $K$ that respects isomorphisms and extends the $\tau$-substructure relation. In any abstract class $\K$, there is a natural notion of morphism: we say that $f: M \rightarrow N$ is a \emph{$\K$-embedding} if $f$ is an isomorphism from $f$ onto $f[M]$ and $f[M] \lea N$. We see an abstract class and its $\K$-embeddings as a category (but we still use boldface, i.e.\ denote it by $\K$ and not $\ck$, to emphasize the concreteness of the category). In fact (see \cite[\S2]{mu-aec-jpaa}), an abstract class is a replete and iso-full subcategory of the category of $\tau$-structures and $\tau$-structure embeddings.  It is moreover a \emph{concrete} category: we will write $U M$ for the universe of a member $M$ of $\K$. 

We now recall the definition of a $\mu$-AEC from \cite[2.2]{mu-aec-jpaa}:

\begin{defin}
  Let $\mu$ be a regular cardinal. An abstract class $\K$ is a \emph{$\mu$-abstract elementary class} (or \emph{$\mu$-AEC} for short) if it satisfies the following three axioms:

  \begin{enumerate}
  \item Coherence: for any $M_0, M_1, M_2 \in \K$, if $M_0 \subseteq M_1 \lea M_2$ and $M_0 \lea M_2$, then $M_0 \lea M_1$.
  \item Chain axioms: if $\seq{M_i : i \in I}$ is a $\mu$-directed system in $\K$, then:
    \begin{enumerate}
    \item $M := \bigcup_{i \in I} M_i$ is in $\K$.
    \item $M_i \lea M$ for all $i \in I$.
    \item If $M_i \lea N$ for all $i \in I$, then $M \lea N$.
    \end{enumerate}
  \item Löwenheim-Skolem-Tarski (LST) axiom: there exists a cardinal $\lambda = \lambda^{<\mu} \ge |\tau (\K)| + \mu$ such that for any $M \in \K$ and any $A \subseteq U M$, there exists $M_0 \in \K$ with $M_0 \lea M$, $A \subseteq U M_0$, and $|U M_0| \le |A|^{<\mu} + \lambda$. We write $\LS (\K)$ for the least such $\lambda$.
  \end{enumerate}
\end{defin}

Note that when $\mu = \aleph_0$, we recover Shelah's definition of an AEC from \cite{sh88}. The connection of $\mu$-AECs to accessible categories is given by \cite[\S 4]{mu-aec-jpaa}:

\begin{fact}\label{mu-aec-acc}
  If $\K$ is a $\mu$-AEC, then it is a $(\mu, \LS (\K)^+)$-accessible category whose morphisms are monomorphisms. Conversely, any $\mu$-accessible category whose morphisms are monomorphisms is equivalent to a $\mu$-AEC.
\end{fact}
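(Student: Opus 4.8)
The plan is to prove the two implications separately, treating the forward direction (every $\mu$-AEC is accessible) as a bookkeeping exercise built on the presentability machinery already developed, and reserving the substantive work for the converse.

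For the forward direction, suppose $\K$ is a $\mu$-AEC and write $\lambda = \LS(\K)$. First, $\K$-embeddings are injective on underlying sets, hence monomorphisms in the concrete category $\K$. Next I would show that $\mu$-directed colimits exist and are computed as unions: given a $\mu$-directed system, chain axiom (2a) places its union in $\K$, (2b) makes the insertions $\K$-embeddings, and the universal property follows by applying (2a) and (2c) to the $\mu$-directed system of images of any competing cocone. The heart of the argument is the identification of the $\LS(\K)^+$-presentable objects as exactly those $M$ with $|UM|\le\lambda$. For the backward inclusion, if $|UM|\le\lambda$ and $f:M\to N$ with $N$ an $\LS(\K)^+$-directed colimit (hence a directed union), then $f[UM]$ has size $\le\lambda<\LS(\K)^+$ and so lands inside a single $N_i$; coherence upgrades $f[M]\subseteq N_i$ to $f[M]\lea N_i$, giving a factorization, and essential uniqueness is immediate because the transition maps are monomorphisms. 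For the forward inclusion, using the LST axiom together with $\lambda^{<\mu}=\lambda$, the poset of $\lea$-substructures of $M$ of size $\le\lambda$ is $\LS(\K)^+$-directed (any $\le\lambda$ of them have a common upper bound of size $\le\lambda$, by LST and coherence) with colimit $M$; so if $M$ is $\LS(\K)^+$-presentable then $\id_M$ factors through one such substructure $M_0$, and the inclusion $M_0\to M$ is then a split epimorphism as well as a monomorphism, hence an isomorphism, whence $|UM|\le\lambda$.

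This characterization yields resolvability (every $M$ is the $\mu$-directed colimit of its $\lea$-substructures of size $\le\lambda$) and the fact that there is only a set of $\LS(\K)^+$-presentable objects up to isomorphism (structures of size $\le\lambda$ in a fixed $\mu$-ary vocabulary, bounded by $2^\lambda$), so $\K$ is $(\mu, \LS(\K)^+)$-accessible. For the converse, let $\ck$ be $\mu$-accessible with all morphisms monomorphisms, and let $\mathcal A$ be a small set of representatives of the $\mu$-presentable objects. I would represent $\ck$ concretely via the restricted Yoneda embedding $E:\ck\to\Set^{\mathcal A^{\mathrm{op}}}$, $E(M)=\ck(-,M)\rest\mathcal A$, viewing a presheaf on $\mathcal A$ as a many-sorted structure (one sort per object of $\mathcal A$, one unary function symbol per morphism of $\mathcal A$, acting by precomposition), so that $\tau$ is a $\mu$-ary vocabulary. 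Define $K$ as the class of $\tau$-structures isomorphic to some $E(M)$ and set $P\lea Q$ to mean $P$ is a substructure of $Q$. The key input is that the $\mu$-presentables are dense in a $\mu$-accessible category, so $E$ is fully faithful; hence $\tau$-homomorphisms between members of $K$ are exactly the images $E(h)$ of $\ck$-morphisms, and as these are all monomorphisms they coincide with the injective homomorphisms, i.e.\ the substructure embeddings, giving $\K\simeq\ck$. Full faithfulness also makes coherence trivial, since a substructure inclusion between members of $K$ is a natural transformation, hence $E$ of a $\K$-embedding; and the chain axioms hold because each $\ck(A,-)$ with $A$ $\mu$-presentable preserves $\mu$-directed colimits, so $E$ does, and such colimits of presheaves are computed sortwise as unions.

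The main obstacle is the Löwenheim--Skolem--Tarski axiom, where accessibility must be converted into a concrete cardinal bound. I would set $\lambda=\left(|\mathrm{Mor}(\mathcal A)|+\mu\right)^{<\mu}$, so that $\lambda^{<\mu}=\lambda\ge|\tau|+\mu$, and argue as follows: an element of the underlying set of $E(M)$ is a morphism $f:A\to M$ with $A\in\mathcal A$, and since $M$ is a $\mu$-directed colimit of $\mu$-presentable subobjects and $A$ is $\mu$-presentable, $f$ factors through one such subobject. Given a set $S$ of $\le\kappa$ elements of $UE(M)$, one closes off under these factorizations to build a subobject $M_0\lea M$ generated by $\mu$-presentables, and the delicate step is bounding $|UE(M_0)|=\sum_{A\in\mathcal A}|\ck(A,M_0)|$ by $\kappa^{<\mu}+\lambda$, using that each hom-set from a $\mu$-presentable into a small $\mu$-directed colimit of $\mu$-presentables is controlled by $\sup_{A,B\in\mathcal A}|\ck(A,B)|$ together with $|\mathcal A|$. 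Carrying out this cardinal arithmetic and checking that the resulting bound has exactly the form $|S|^{<\mu}+\lambda$ demanded by the axiom is the technical crux of the statement.
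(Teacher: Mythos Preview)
The paper does not give a proof of this statement: it is recorded as a \emph{Fact} and simply cited from \cite[\S 4]{mu-aec-jpaa}. So there is no in-paper argument to compare your proposal against.

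That said, your sketch is essentially the standard proof, and is in line with what the cited reference does. Your forward direction is clean and correct; in particular, your identification of the $\LS(\K)^+$-presentables with the models of cardinality at most $\LS(\K)$ via the $\LS(\K)^+$-directed system of small $\lea$-substructures is exactly Fact~\ref{internal-around-ls} (and Lemmas~\ref{pres-1}, \ref{pres-2}, \ref{closed-lst}) of the present paper. For the converse, the restricted Yoneda embedding into presheaves on a skeleton of the $\mu$-presentables, reinterpreted as many-sorted structures, is again the standard route. Two minor remarks: coherence here is automatic for the substructure ordering on any class of structures and does not actually require full faithfulness; and in the L\"owenheim--Skolem--Tarski step you correctly flag the cardinal bookkeeping as the crux, but the argument as you wrote it would need one more explicit pass---namely, using $\mu\tleq\lambda^+$ to enlarge the chosen family of $\mu$-presentable subobjects to a $\mu$-directed one of size at most $\kappa^{<\mu}+\lambda$ before taking its colimit, and then exploiting that all morphisms are monomorphisms to bound each $|\ck(A,M_0)|$ by the size of that directed family times $\sup_{A,B\in\mathcal A}|\ck(A,B)|$.
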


Thus applying Corollary \ref{mono-internal-size}, we immediately get that for any $\mu$-AEC $\K$ with arbitrarily large models and any $\lambda \ge \LS (\K)$ regular, there exists $M \in \K$ with $\lambda \le |M|_{\K} \le \lambda^{<\mu}$. The aim of this section is to investigate the relationship between internal sizes and cardinalities in $\mu$-AECs. The main result is that assuming GCH, or weakening of the form described in Definition~\ref{sch-def}, internal size and cardinality agree on any sufficiently large model whose cardinality is not the successor of a cardinal of cofinality less than $\mu$. From this we can conclude further results on the existence spectrum.

First note that the definition of presentability simplifies in $\mu$-AECs:

\begin{lem}\label{mu-aec-present}
  Let $\K$ be a $\mu$-AEC, let $\lambda \ge \mu$ be a regular cardinal, and let $M \in \K$. Then $M$ is $\lambda$-presentable if and only if for any $\lambda$-directed system $\seq{M_i : i \in I}$, if $M \lea \bigcup_{i \in I} M_i$, then there exists $i \in I$ such that $M \lea M_i$.
\end{lem}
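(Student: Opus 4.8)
The plan is to unwind the categorical definition of $\lambda$-presentability, exploiting that in a $\mu$-AEC colimits are computed concretely. Since $\lambda \ge \mu$, every $\lambda$-directed diagram is $\mu$-directed, so by the chain axioms its colimit is the union $N = \bigcup_\alpha N_\alpha$, with colimit maps $\phi_\alpha \colon N_\alpha \to N$ the inclusions. Moreover, any $\lambda$-directed colimit diagram may be replaced, up to isomorphism, by a $\lambda$-directed system of substructures: replace each $N_\alpha$ by its image under the colimit map, which is $\lea N$ since the colimit maps are $\K$-embeddings. This is the standard observation that a directed colimit of monomorphisms is a directed union. Thus it suffices to show that the displayed condition is equivalent to: for every $\lambda$-directed system of substructures $\seq{N_\alpha}$ with union $N$ and every $\K$-embedding $f \colon M \to N$, the map $f$ factors essentially uniquely through some $N_\alpha$. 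A pleasant feature of this concrete picture is that essential uniqueness is automatic: if $f = \phi_\alpha f_\alpha = \phi_\beta f_\beta$ with $\phi_\alpha, \phi_\beta$ inclusions, then $f_\alpha$ and $f_\beta$ are literally the same function as $f$ (with restricted codomains), hence are equalized by the inclusion into any $N_\gamma$ with $\gamma \ge \alpha, \beta$. So only the existence of a factorization is at issue.

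For the forward direction, suppose $M$ is $\lambda$-presentable and let $\seq{M_i : i \in I}$ be a $\lambda$-directed system with $M \lea N := \bigcup_i M_i$. The inclusion $M \hookrightarrow N$ is a $\K$-embedding, hence a morphism, and $\lambda$-presentability yields a factorization through some $M_i$, say via $f_i \colon M \to M_i$ with $\phi_i f_i$ equal to the inclusion. Since $\phi_i$ is also an inclusion, $f_i$ must be the identity on $U M$; as $f_i$ is a $\K$-embedding this forces $M = f_i[M] \lea M_i$, which is exactly the desired conclusion.

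For the reverse direction, assume the displayed condition and let $f \colon M \to N$ be a $\K$-embedding with $N = \bigcup_\alpha N_\alpha$ a $\lambda$-directed union. Having dispatched essential uniqueness, it remains to find $\alpha$ with $f[M] \lea N_\alpha$, for then the corestriction of $f$ is a $\K$-embedding $M \to N_\alpha$ composing with the inclusion to give $f$. Since $f$ is an isomorphism onto $f[M]$, the structure $f[M] \cong M$ inherits the displayed condition; concretely, one transports the system $\seq{N_\alpha}$ through $f^{-1}$ on $f[M]$ and the identity elsewhere to obtain an isomorphic $\lambda$-directed system whose union contains $M$ as a strong substructure, applies the hypothesis to $M$, and transports the resulting inequality back through $f$. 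This produces the required $\alpha$.

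The main obstacle—more bookkeeping than genuine difficulty—is the passage between the purely categorical definition (arbitrary morphisms into arbitrary $\lambda$-directed colimit diagrams) and the concrete formulation (the inclusion $M \lea \bigcup_i M_i$ into a directed union). Two reductions carry this out: rigidifying an arbitrary colimit diagram into a system of substructures, and replacing an arbitrary morphism $f$ by the inclusion of its image $f[M] \cong M$, which requires noting that the displayed condition is invariant under isomorphism. Both are routine in the concrete setting of a $\mu$-AEC, and once they are in place the equivalence follows immediately.
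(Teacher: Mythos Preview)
Your proof is correct and follows essentially the same approach as the paper's: concretize $\lambda$-directed colimits as unions, reduce an arbitrary morphism $f\colon M\to N$ to the inclusion of its image, and read off the factorization. You are in fact more careful than the paper, which compresses the forward direction to a single sentence and handles the reverse direction with ``coherence does the rest''; your explicit treatment of essential uniqueness (automatic from monomorphisms) and of the isomorphism-invariance needed to apply the hypothesis to $f[M]$ fills in exactly what the paper leaves implicit. One minor quibble: the phrase ``through $f^{-1}$ on $f[M]$ and the identity elsewhere'' could run into trouble if $UM$ and $UN\setminus f[UM]$ overlap, but the standard renaming fix is routine and your intent is clear.
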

\begin{proof}
($\Rightarrow$) Let $M$ be $\lambda$-presentable, and let $\seq{M_i : i \in I}$ be a $\lambda$-directed system, with $M \lea \bigcup_{i \in I} M_i$. By $\lambda$-presentability, there exists $i \in I$ such that $M \lea M_i$. 

($\Leftarrow$) Say we have $M\to N$, $N=\mbox{colim}_{i\in I} N_i$ with $I$ $\lambda$-directed.  Then $N$ is a $\lambda$-directed union of the images of the $N_i$ under the colimit coprojections, and so by hypothesis, the image of $M$ in $N$ must land in one of them. Coherence does the rest.
\end{proof}

Toward bounding how big the presentability of an object can be, we look at what it means for a model to be minimal over a set:

\begin{defin}\label{minimal-def}
  Let $\K$ be a $\mu$-AEC, let $M \in \K$, and let $A \subseteq U M$. We say that $M$ is \emph{minimal over $A$} if for any $M_0, N \in \K$, if $M \lea N$, $M_0 \lea N$, and $A \subseteq U M_0$, then $M \lea M_0$.
\end{defin}

\begin{lem}\label{pres-1}
  Let $\K$ be a $\mu$-AEC and let $\lambda \ge \mu$ be a regular cardinal. Let $M \in \K$ and let $A \subseteq U M$ be such that $M$ is minimal over $A$. If $|A| < \lambda$, then $M$ is $\lambda$-presentable.
  
  In particular (taking $A := U M$), if $|U M| < \lambda$, then $M$ is $\lambda$-presentable.
\end{lem}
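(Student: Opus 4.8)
The plan is to reduce to the combinatorial characterization of presentability provided by Lemma \ref{mu-aec-present} and then exploit $\lambda$-directedness to cover the small set $A$ by a single member of the system. By Lemma \ref{mu-aec-present}, since $\lambda \ge \mu$ is regular, it suffices to show the following: for every $\lambda$-directed system $\seq{M_i : i \in I}$ with $M \lea N := \bigcup_{i \in I} M_i$, there is some $i \in I$ with $M \lea M_i$. So I would fix such a system and aim to produce the index $i$.

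The central step is the covering argument. We have $A \subseteq U M \subseteq U N = \bigcup_{i \in I} U M_i$, and by assumption $|A| < \lambda$. For each $a \in A$ choose $i_a \in I$ with $a \in U M_{i_a}$. Since the system is $\lambda$-directed and $\{i_a : a \in A\}$ is a subset of $I$ of cardinality at most $|A| < \lambda$, it has an upper bound $i \in I$. As the diagram maps are inclusions (the chain axiom realizes the colimit as a union), $U M_{i_a} \subseteq U M_i$ for each $a$, and hence $A \subseteq U M_i$.

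Finally I would invoke minimality to close the gap. Note that $\lambda \ge \mu$ means the system is in particular $\mu$-directed, so the chain axiom applies and yields $N \in \K$ together with $M_i \lea N$. We then have $M \lea N$ (the hypothesis), $M_i \lea N$, and $A \subseteq U M_i$; applying the fact that $M$ is minimal over $A$ (with $M_i$ playing the role of $M_0$) gives exactly $M \lea M_i$, as required. For the ``in particular'' clause, I would first observe that $M$ is always minimal over $U M$: if $M \lea N$, $M_0 \lea N$, and $U M \subseteq U M_0$, then since $M \subseteq M_0 \lea N$ and $M \lea N$, coherence yields $M \lea M_0$. Taking $A := U M$ in the main statement then gives the result whenever $|U M| < \lambda$.

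There is no serious obstacle here; the argument is essentially a bookkeeping exercise. The only point requiring a little care is the covering step, where one must use regularity of $\lambda$ (equivalently, $\lambda$-directedness of $I$) to absorb the whole of $A$—of size strictly below $\lambda$—into a single $M_i$, after which the definition of minimality does all the work.
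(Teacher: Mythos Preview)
Your proof is correct and follows essentially the same approach as the paper: reduce via Lemma~\ref{mu-aec-present}, use $\lambda$-directedness to absorb $A$ into a single $M_i$, and apply minimality. You simply spell out in more detail the covering step and the coherence argument behind the ``in particular'' clause, both of which the paper leaves implicit.
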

\begin{proof}
  We use Lemma \ref{mu-aec-present}. Let $\seq{M_i : i \in I}$ be a $\lambda$-directed system such that $M \lea \bigcup_{i \in I} M_i$. Since the system is $\lambda$-directed, there exists $i \in I$ such that $A \subseteq U M_i$. By definition of minimality, this means that $M \lea M_i$, as desired.
\end{proof}

\begin{remark}
  The assumption that $\lambda \ge \mu$ cannot in general be removed, see Example \ref{metric-example}.
\end{remark}

Assuming a certain closure condition, we obtain a converse to Lemma \ref{pres-1}:

\begin{defin}\label{closed-def}
  Let $\K$ be a $\mu$-AEC, let $\lambda$ be a regular cardinal, and let $M \in \K$. $M$ is \emph{$\lambda$-closed} if for any $A \subseteq U M$ of cardinality less than $\lambda$ there exists $M_0 \in \K$ with $M_0 \lea M$ of cardinality less than $\lambda$ and containing $A$.
\end{defin}

\begin{lem}\label{pres-2}
  Let $\K$ be a $\mu$-AEC and let $\lambda \ge \mu$ be a regular cardinal. If $M$ is $\lambda$-closed and $\lambda$-presentable, then $| U M| < \lambda$.
\end{lem}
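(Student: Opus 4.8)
The plan is to realize $M$ as a $\lambda$-directed union of its substructures of cardinality less than $\lambda$, and then let $\lambda$-presentability collapse $M$ into one of them.

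First I would consider the family $\mathcal{F}$ of all $M_0 \in \K$ with $M_0 \lea M$ and $|U M_0| < \lambda$, ordered by $\lea$. The two facts to establish are that $\mathcal{F}$ is $\lambda$-directed and that $\bigcup \mathcal{F} = M$. For directedness, take fewer than $\lambda$ members $\seq{M_\alpha : \alpha < \delta}$ of $\mathcal{F}$ with $\delta < \lambda$. Since $\lambda$ is regular and each $|U M_\alpha| < \lambda$, the set $A := \bigcup_{\alpha < \delta} U M_\alpha$ has cardinality less than $\lambda$; here regularity of $\lambda$ is exactly what keeps the bounding set small enough. Applying $\lambda$-closedness of $M$ to $A$ yields some $M_0 \lea M$ with $|U M_0| < \lambda$ and $A \subseteq U M_0$, so $M_0 \in \mathcal{F}$. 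Coherence then upgrades the mere set inclusions to the order relation: from $M_\alpha \subseteq M_0 \lea M$ together with $M_\alpha \lea M$ we get $M_\alpha \lea M_0$, so $M_0$ is an upper bound in $\mathcal{F}$. That $\bigcup \mathcal{F} = M$ follows by applying $\lambda$-closedness to singletons $\{a\}$ for each $a \in U M$, so every point of $U M$ lies in some member of $\mathcal{F}$, forcing the underlying sets of $\bigcup \mathcal{F}$ and $M$ to coincide.

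Now, since $\lambda \ge \mu$, the system $\mathcal{F}$ is in particular $\mu$-directed, so the chain axioms apply and exhibit $M$ as the $\lambda$-directed union (colimit) of $\mathcal{F}$, with $M \lea \bigcup \mathcal{F} = M$ holding trivially by reflexivity. Invoking the characterization of $\lambda$-presentability in Lemma \ref{mu-aec-present} against this particular $\lambda$-directed system produces some $M_0 \in \mathcal{F}$ with $M \lea M_0$. Combined with $M_0 \lea M$ (which holds by definition of $\mathcal{F}$), and using that $\lea$ extends the substructure relation in both directions, we conclude $U M = U M_0$, whence $|U M| = |U M_0| < \lambda$, as desired.

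I expect the only genuinely delicate step to be the verification of $\lambda$-directedness of $\mathcal{F}$: it is precisely there that all three hypotheses must be combined—$\lambda$-closedness to produce a small bounding object, regularity of $\lambda$ to guarantee the union of the chosen universes stays below $\lambda$, and coherence to convert set-theoretic containment into the abstract-class order $\lea$. Everything after that is a routine application of the presentability criterion.
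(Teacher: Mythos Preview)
Your proof is correct and follows exactly the same approach as the paper: define the family of small $\lea$-substructures, observe it is $\lambda$-directed with union $M$, and apply the presentability criterion of Lemma~\ref{mu-aec-present}. The paper's proof is a three-line sketch that asserts $\lambda$-directedness without justification; you have simply filled in the details (regularity, $\lambda$-closedness, coherence) that the paper leaves implicit.
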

\begin{proof}
  Let $S := \{M_0 \lea M \mid |U M_0| < \lambda\}$. Then $S$ is $\lambda$-directed and (because $M$ is $\lambda$-closed), $M$ is the colimit of the system induced by $S$. Since $M$ is $\lambda$-presentable, there exists $M_0 \in S$ such that $M \lea M_0$, hence $M$ has size less than $\lambda$.
\end{proof}

Note that if $\lambda$ is a sufficiently-nice cardinal, then any member of $\K$ will be $\lambda$-closed:

\begin{lem}\label{closed-lst}
  Let $\K$ be a $\mu$-AEC and let $\lambda > \LS (\K)$. If $\lambda$ is $\mu$-closed and $\lambda > \LS (\K)$, then any element of $\K$ is $\lambda$-closed.
\end{lem}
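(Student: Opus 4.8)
The plan is to apply the Löwenheim--Skolem--Tarski (LST) axiom directly and then bound the size of the resulting substructure using $\mu$-closedness. First I would fix $M \in \K$ and a subset $A \subseteq U M$ with $|A| < \lambda$; the goal, per Definition \ref{closed-def}, is to produce $M_0 \lea M$ with $A \subseteq U M_0$ and $|U M_0| < \lambda$. Applying the LST axiom to $M$ and $A$ yields $M_0 \in \K$ with $M_0 \lea M$, $A \subseteq U M_0$, and $|U M_0| \le |A|^{<\mu} + \LS (\K)$. It then remains only to check that this upper bound lies strictly below $\lambda$.

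For this I would estimate the two summands separately. The second is handled by the hypothesis $\lambda > \LS (\K)$. For the first, observe that $\lambda$ is uncountable (indeed $\lambda > \LS (\K) \ge \mu \ge \aleph_0$), so $\theta := |A| + \aleph_0$ satisfies $\theta < \lambda$; since $|A|^{<\mu} \le \theta^{<\mu}$ and $\lambda$ is $\mu$-closed, Definition \ref{sch-def} gives $\theta^{<\mu} < \lambda$, whence $|A|^{<\mu} < \lambda$. As both $|A|^{<\mu}$ and $\LS (\K)$ are infinite cardinals below $\lambda$, their sum equals $\max(|A|^{<\mu}, \LS (\K)) < \lambda$, so $|U M_0| < \lambda$, exactly as required to witness that $M$ is $\lambda$-closed.

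I do not anticipate any serious obstacle here: the whole content is that the LST bound $|A|^{<\mu} + \LS (\K)$ remains below any $\mu$-closed $\lambda > \LS (\K)$. The only points demanding a moment's attention are the absorption of a possibly finite $A$ into the $\aleph_0$ summand (so that $\mu$-closedness genuinely applies to give $|A|^{<\mu} < \lambda$) and the recollection that addition of infinite cardinals is just the maximum. Incidentally, regularity of $\lambda$---part of the meaning of ``$\lambda$-closed'' in Definition \ref{closed-def}---is never actually invoked in the argument.
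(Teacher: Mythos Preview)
Your argument is correct and is precisely a detailed unpacking of the paper's one-line proof, which just reads ``By the L\"owenheim--Skolem--Tarski axiom of $\mu$-AECs.'' The care you take with the possibly finite $|A|$ and the observation about regularity are sound but go beyond what the paper records.
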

\begin{proof}
  By the Löwenheim-Skolem-Tarski axiom of $\mu$-AECs.
\end{proof}

By \cite[4.2]{mu-aec-jpaa}, the behavior of $|M|_{\K}$ around $\LS (\K)$ is well-understood. We give a proof here again for completeness:

\begin{fact}\label{internal-around-ls}
  Let $\K$ be a $\mu$-AEC and let $M \in \K$. The following are equivalent:

  \begin{enumerate}
  \item\label{around-ls-1} $|U M| \le \LS (\K)$.
  \item\label{around-ls-2} $M$ is $\LS (\K)^+$-presentable.
  \item\label{around-ls-3} $|M|_{\K} \le \LS (\K)$.
  \end{enumerate}
\end{fact}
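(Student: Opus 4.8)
Set $\lambda := \LS(\K)$ throughout, and recall that the LST axiom forces $\lambda = \lambda^{<\mu}$ and $\lambda \ge \mu$. The plan is to prove the equivalence by first observing that (\ref{around-ls-2}) and (\ref{around-ls-3}) are two ways of saying the same thing, and then closing the loop between (\ref{around-ls-1}) and (\ref{around-ls-2}) using the presentability lemmas \ref{pres-1} and \ref{pres-2} together with the closure lemma \ref{closed-lst}.

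For (\ref{around-ls-2})$\iff$(\ref{around-ls-3}) I would simply unwind the definition of internal size. Since presentability is upward closed in its regular parameter (any $\nu$-directed colimit is a fortiori $\lambda'$-directed for regular $\lambda' \le \nu$), $M$ is $\lambda^+$-presentable precisely when $r_\K(M) \le \lambda^+$. A short case split then identifies this with $|M|_\K \le \lambda$: if $r_\K(M) = \nu^+$ is a successor then $|M|_\K = \nu$ and $\nu \le \lambda \Leftrightarrow \nu^+ \le \lambda^+$, while if $r_\K(M)$ is a (necessarily weakly inaccessible) limit then $|M|_\K = r_\K(M)$, and a limit cardinal is $\le \lambda^+$ exactly when it is $\le \lambda$.

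The implication (\ref{around-ls-1})$\Rightarrow$(\ref{around-ls-2}) is then immediate from the ``in particular'' clause of Lemma \ref{pres-1}, applied with $\lambda^+$ in place of $\lambda$: since $|UM| \le \lambda < \lambda^+$ and $\lambda^+ \ge \mu$, the object $M$ is $\lambda^+$-presentable.

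The converse (\ref{around-ls-2})$\Rightarrow$(\ref{around-ls-1}) carries the real content and is the step I expect to be the main obstacle, since it demands the closure hypothesis needed to invoke Lemma \ref{pres-2}. The crucial observation is that $\lambda^+$ is $\mu$-closed: for every $\theta < \lambda^+$ we have $\theta \le \lambda$, hence $\theta^{<\mu} \le \lambda^{<\mu} = \lambda < \lambda^+$, where the middle equality is exactly the LST constraint $\LS(\K) = \LS(\K)^{<\mu}$. Because also $\lambda^+ > \LS(\K)$, Lemma \ref{closed-lst} shows that every member of $\K$, and in particular $M$, is $\lambda^+$-closed. Now $M$ is both $\lambda^+$-closed and $\lambda^+$-presentable, so Lemma \ref{pres-2} gives $|UM| < \lambda^+$, i.e.\ $|UM| \le \lambda$. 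Assembling the three implications closes the equivalence.
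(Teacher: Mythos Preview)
Your proof is correct and follows essentially the same route as the paper: both use the definition of internal size for (\ref{around-ls-2})$\Leftrightarrow$(\ref{around-ls-3}), Lemma~\ref{pres-1} for (\ref{around-ls-1})$\Rightarrow$(\ref{around-ls-2}), and the combination of $\LS(\K)^{<\mu}=\LS(\K)$, Lemma~\ref{closed-lst}, and Lemma~\ref{pres-2} for (\ref{around-ls-2})$\Rightarrow$(\ref{around-ls-1}). You simply spell out in more detail the case split for (\ref{around-ls-2})$\Leftrightarrow$(\ref{around-ls-3}) and the verification that $\lambda^+$ is $\mu$-closed, both of which the paper leaves implicit.
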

\begin{proof}
  By definition of internal size, (\ref{around-ls-2}) is equivalent to (\ref{around-ls-3}). Moreover, Lemma \ref{pres-1} says in particular that (\ref{around-ls-1}) implies (\ref{around-ls-3}). Conversely, assume that $M$ is $\LS (\K)^+$-presentable. The axioms of $\mu$-AECs imply that $\LS (\K)^{<\mu} = \LS (\K)$. Therefore by Lemma \ref{closed-lst} (where $\lambda$ there stands for $\LS (\K)^+$ here), $M$ is $\LS (\K)^+$-closed. By Lemma \ref{pres-2}, $|U M| < \LS (\K)^+$. Therefore (\ref{around-ls-2}) implies (\ref{around-ls-1}).
\end{proof}

We now attempt to compute $|M|_{\K}$ when $|U M| > \LS (\K)$.

\begin{lem}\label{zfc-size}
  Let $\K$ be a $\mu$-AEC and let $M \in \K$. Let $\lambda := |U M|$ and assume that $\lambda > \LS (\K)$. Then:

  \begin{enumerate}
  \item $|M|_{\K} \le \lambda$ (note: this also holds when $\lambda^+ \in [\mu, \LS (\K)^+]$).
  \item If $\lambda_0 \le \lambda$ is regular such that $M$ is $\lambda_0$-closed, then $r_{\K} (M) > \lambda_0$, so in particular $|M|_{\K} \ge \lambda_0$.
  \item If $\lambda$ is $\mu$-closed, then $r_{\K} (M) = \lambda^+$, so $|M|_{\K} = \lambda$.
  \end{enumerate}
\end{lem}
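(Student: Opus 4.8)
The plan is to deduce all three parts from the presentability criteria of Lemmas \ref{pres-1}, \ref{pres-2} and \ref{closed-lst}, the only delicate point being that $\lambda$ may be singular in part (3). For part (1), apply Lemma \ref{pres-1} with $A := U M$: since $|U M| = \lambda < \lambda^+$ and $\lambda^+$ is a regular cardinal $\ge \mu$ (because $\lambda \ge \mu$, as $\lambda > \LS (\K) \ge \mu$), $M$ is $\lambda^+$-presentable, so $r_\K (M) \le \lambda^+$. Unwinding the definition of internal size then gives $|M|_\K \le \lambda$ in every case: if $r_\K (M) = \lambda^+$ then $|M|_\K = \lambda$, and if $r_\K (M) \le \lambda$ then $|M|_\K \le r_\K (M) \le \lambda$ whether the rank is a successor or a limit. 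The only property used is that $\lambda^+ \ge \mu$ is regular, which is exactly what underlies the parenthetical extension to the range $\lambda^+ \in [\mu, \LS (\K)^+]$.

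For part (2) I argue by contraposition. If $\lambda_0 < \mu$ the conclusion is immediate, since $r_\K (M) > \LS (\K) \ge \mu > \lambda_0$ by Fact \ref{internal-around-ls} (as $|U M| > \LS (\K)$), so assume $\lambda_0 \ge \mu$. Suppose $M$ were $\lambda_0$-presentable. Then $\lambda_0$ is regular with $\mu \le \lambda_0 \le \lambda$ and $M$ is $\lambda_0$-closed, so Lemma \ref{pres-2} yields $|U M| < \lambda_0$, contradicting $|U M| = \lambda \ge \lambda_0$. Hence $M$ is not $\lambda_0$-presentable, and since $\lambda'$-presentability implies $\lambda_0$-presentability for regular $\lambda' \le \lambda_0$, we get $r_\K (M) > \lambda_0$ and thus $|M|_\K \ge \lambda_0$.

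For part (3) the target is $r_\K (M) = \lambda^+$; part (1) already gives $r_\K (M) \le \lambda^+$, so it remains to show $r_\K (M) > \lambda$. Let $C$ be the set of $\mu$-closed regular cardinals $\lambda_0$ with $\LS (\K) < \lambda_0 \le \lambda$. For each $\lambda_0 \in C$, Lemma \ref{closed-lst} makes $M$ $\lambda_0$-closed, so part (2) gives $r_\K (M) > \lambda_0$. The whole argument thus reduces to showing $\sup C = \lambda$, and this is where the singular case is the main obstacle: when $\lambda$ is regular it is $\mu$-closed and $> \LS (\K)$, so $\lambda \in C$ and there is nothing to do, but when $\lambda$ is singular one must manufacture $\mu$-closed regular cardinals cofinally below $\lambda$.

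To produce them I use the idempotency $(\eta^{<\mu})^{<\mu} = \eta^{<\mu}$ (valid for $\mu$ regular, as in Lemma \ref{sch-lem}). Given any $\eta$ with $\LS (\K) < \eta < \lambda$, put $\theta := \eta^{<\mu}$; then $\theta < \lambda$ by $\mu$-closedness of $\lambda$, $\theta \ge \eta$, and $\theta^{<\mu} = \theta$. The last equality forces $\theta^+$ to be $\mu$-closed (any $\eta' \le \theta$ has $(\eta')^{<\mu} \le \theta < \theta^+$), and $\theta^+$ is regular with $\LS (\K) < \eta < \theta^+ \le \lambda$; moreover $\theta^+ < \lambda$ since a singular $\lambda$ is a limit cardinal. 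Hence $\theta^+ \in C$ and $C$ is cofinal in $\lambda$, so $\sup C = \lambda$. Finally, $r_\K (M) > \lambda_0$ for all $\lambda_0 \in C$ gives $r_\K (M) \ge \lambda$, and in fact $r_\K (M) > \lambda$: immediately if $\lambda \in C$, and otherwise because the regular cardinal $r_\K (M)$ cannot equal the singular cardinal $\lambda$. Together with part (1) this yields $r_\K (M) = \lambda^+$ and $|M|_\K = \lambda$.
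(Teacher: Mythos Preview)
Your proof is correct and follows essentially the same approach as the paper: Lemma \ref{pres-1} for the upper bound in (1), Lemma \ref{pres-2} for the lower bound in (2), and for (3) the production of cofinally many $\mu$-closed regular cardinals of the form $(\theta^{<\mu})^+$ below a singular $\mu$-closed $\lambda$. Your write-up is somewhat more explicit about edge cases (the $\lambda_0 < \mu$ case in (2), the verification that $\theta^+ \in C$, and the final step ruling out $r_\K(M) = \lambda$ by regularity), but the structure and the key lemmas invoked are identical to the paper's argument.
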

\begin{proof} \
  \begin{enumerate}
  \item By Lemma \ref{pres-1}, $M$ is always $\lambda^+$-presentable.
  \item We know that $M$ is not $\LS (\K)^+$-presentable (since we are assuming $\lambda > \LS (\K)$). Thus we may assume without loss of generality that $\lambda_0 > \LS (\K)^+$. By Lemma \ref{pres-2}, $M$ is not $\lambda_0$-presentable, hence $r_{\K} (M) > \lambda_0$, so $|M|_{\K} \ge \lambda_0$.
  \item If $\lambda$ is regular, then by the previous part used with $\lambda_0 = \lambda$, $r_{\K} (M) \ge \lambda^+$ and this is an equality by the first part. If $\lambda$ is singular (hence limit), then by Lemma \ref{closed-lst} and the previous parts, it suffices to show that there are unboundedly-many regular cardinals $\lambda_0 \le \lambda$ that are $\mu$-closed. Let $\theta < \lambda$ and let $\lambda_0 := (\theta^{<\mu})^+$. Then $\lambda_0 < \lambda$ (as $\lambda$ is $\mu$-closed and limit), $\lambda_0$ is regular, and $\lambda_0$ is $\mu$-closed, as needed.
  \end{enumerate}
\end{proof}

We obtain the following inequality:

\begin{thm}\label{size-ineq}
  Let $\K$ be a $\mu$-AEC and let $M \in \K$. Then:

  $$
  |M|_{\K} \le |U M| + \LS (\K) \le |M|_{\K}^{<\mu} + \LS (\K)
  $$
  
\end{thm}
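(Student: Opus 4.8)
The plan is to prove the two inequalities separately: the left-hand one, $|M|_{\K} \le |U M| + \LS(\K)$, follows almost immediately from the results already established around $\LS(\K)$, whereas the right-hand one genuinely requires resolving $M$ as a colimit of small submodels and then invoking its presentability. Throughout I use that for infinite cardinals $|UM| + \LS(\K) = \max(|UM|, \LS(\K))$.

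For the left inequality I would split on whether $|UM| \le \LS(\K)$. If $|UM| \le \LS(\K)$, then Fact \ref{internal-around-ls} gives $|M|_{\K} \le \LS(\K)$, which is the desired bound. If instead $|UM| > \LS(\K)$, then Lemma \ref{zfc-size}(1) gives $|M|_{\K} \le |UM|$. In either case $|M|_{\K} \le \max(|UM|, \LS(\K)) = |UM| + \LS(\K)$.

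For the right inequality, after adding $\LS(\K)$ to both sides it suffices to show $|UM| \le |M|_{\K}^{<\mu} + \LS(\K)$, which is trivial when $|UM| \le \LS(\K)$; so assume $|UM| > \LS(\K)$. Set $\kappa := |M|_{\K}$ and $\nu := \kappa^{<\mu} + \LS(\K)$. The first thing to record is the closure property $\nu = \nu^{<\mu}$ (from $(\kappa^{<\mu})^{<\mu} = \kappa^{<\mu}$ and $\LS(\K)^{<\mu} = \LS(\K)$) together with $\nu \ge \LS(\K) \ge \mu$, so that $\nu$ is a legitimate target size for the LST axiom. I would then let $S$ be the poset of all $M_0 \lea M$ with $|UM_0| \le \nu$, ordered by $\lea$. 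Given $\le \nu$ many members $M_i$ of $S$, the union $A$ of their universes has $|A| \le \nu$, so LST produces $M_\ast \lea M$ with $A \subseteq U M_\ast$ and $|U M_\ast| \le |A|^{<\mu} + \LS(\K) \le \nu^{<\mu} + \LS(\K) = \nu$; Coherence then yields $M_i \lea M_\ast$ for each $i$, so $M_\ast \in S$ is an upper bound. Thus $S$ is $\nu^+$-directed, and applying LST to singletons shows $\bigcup_{M_0 \in S} U M_0 = UM$, whence by the chain axioms $M$ is the colimit of $S$.

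Finally I would invoke presentability. By the definition of internal size one always has $r_{\K}(M) \le \kappa^+ \le \nu^+$, so $M$ is $\nu^+$-presentable. Since $\nu^+ \ge \mu$ is regular, Lemma \ref{mu-aec-present} applies to the $\nu^+$-directed colimit $M = \colim S$ and produces some $M_0 \in S$ with $M \lea M_0$; combined with $M_0 \lea M$ this forces $M = M_0$, giving $|UM| \le \nu = |M|_{\K}^{<\mu} + \LS(\K)$, as required. The part deserving the most care—and the one I expect to be the main obstacle—is the verification that $S$ really is $\nu^+$-directed with colimit $M$, since this is exactly where the cardinal arithmetic $\nu^{<\mu} = \nu$ and the interplay of LST with Coherence do the work; once that is in place the conclusion is a clean application of the presentability criterion.
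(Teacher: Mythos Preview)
Your proof is correct and follows essentially the same approach as the paper. For the second inequality the paper argues by contradiction, setting $\lambda_0 := \left(\left(|M|_{\K} + \LS(\K)\right)^{<\mu}\right)^+$ (your $\nu^+$) and invoking Lemma~\ref{closed-lst} and Lemma~\ref{zfc-size}(2) to get $|M|_{\K} \ge \lambda_0$, whereas you unfold those lemmas into a direct construction of the $\nu^+$-directed system and apply Lemma~\ref{mu-aec-present}; the underlying mathematics is identical.
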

\begin{proof}
  By Fact \ref{internal-around-ls}, $|U M| \le \LS (\K)$ if and only if $M$ is $\LS (\K)^+$-presentable. This together with the first part of Lemma \ref{zfc-size} gives the first inequality. For the second, assume for a contradiction that $|U M| + \LS (\K) > \left(|M|_{\K} + \LS (\K)\right)^{<\mu}$. In particular, $|U M| > \LS (\K)$. Let $\lambda := |U M|$ and let $\lambda_0 := \left(\left(|M|_{\K} + \LS (\K)\right)^{<\mu}\right)^+$. Then $\lambda_0 \le \lambda$, $\lambda_0$ is regular, and $\lambda_0$ is $\mu$-closed. By Lemma \ref{closed-lst} and the second part of Lemma \ref{zfc-size}, $|M|_{\K} \ge \lambda_0$. This contradicts the definition of $\lambda_0$.
\end{proof}

This gives a less desirable relationship between internal size and cardinality than one might like:

\begin{thm}\label{internal-size-mu-aec}
  Let $\K$ be a $\mu$-AEC, let $M \in \K$, and let $\lambda := |U M|$. Assume that $\lambda > \LS (\K)$ and assume GCH (or just $\SCH_{\mu, \lambda}$, see Definition \ref{sch-def}). 
  
  \begin{enumerate}
  \item If $\lambda$ is not the successor of a cardinal of cofinality less than $\mu$, then $r_{\K} (M) = \lambda^+$ so $|M|_{\K} = \lambda$.
  \item If $\lambda = \lambda_0^+$ for some $\lambda_0$ with $\cf{\lambda_0} < \mu$, then $|M|_{\K}$ is either $\lambda_0$ or $\lambda$. 
  \end{enumerate}
\end{thm}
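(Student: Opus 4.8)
The plan is to derive both parts from the inequality of Theorem~\ref{size-ineq} together with Lemmas~\ref{zfc-size} and~\ref{mu-closed-sch}, reducing everything to the single set-theoretic hypothesis $\SCH_{\mu,\lambda}$ (which GCH implies via Remark~\ref{sch-rmk}). Throughout I would write $\kappa := |M|_\K$ and record at the outset that, since $\lambda = |U M| > \LS(\K)$, Fact~\ref{internal-around-ls} gives $\kappa > \LS(\K) \ge \mu$; in particular $\kappa \ge \mu$, so the $(<\mu)$-exponentiation machinery of Section~\ref{set-theory-sec} (e.g.\ the definition of almost $\mu$-closed in Definition~\ref{sch-def}) applies to $\kappa$.

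For part~(1), the hypothesis supplies exactly the missing ingredient in Lemma~\ref{mu-closed-sch}: since $\SCH_{\mu,\lambda}$ holds and $\lambda$ is not the successor of a cardinal of cofinality less than $\mu$, that lemma yields that $\lambda$ is $\mu$-closed. The third clause of Lemma~\ref{zfc-size} then applies directly (its standing hypothesis $\lambda > \LS(\K)$ is in force) to give $r_\K(M) = \lambda^+$, hence $|M|_\K = \lambda$.

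For part~(2), I would sandwich $\kappa$ between $\lambda_0$ and $\lambda = \lambda_0^+$. The upper bound $\kappa \le \lambda$ is immediate from the first clause of Lemma~\ref{zfc-size}. For the lower bound, feed the second inequality of Theorem~\ref{size-ineq} into $\lambda = |U M| + \LS(\K) \le \kappa^{<\mu} + \LS(\K)$; since $\lambda > \LS(\K)$, the summand $\LS(\K)$ is absorbed and $\kappa^{<\mu} \ge \lambda$. Now suppose toward a contradiction that $\kappa < \lambda_0$. Using $\SCH_{\mu,\lambda}$, choose an almost $\mu$-closed cardinal $\theta'$ with $\kappa < \theta' < \lambda$ (provided by the unbounded set $S$ of almost $\mu$-closed cardinals witnessing $\SCH_{\mu,\lambda}$). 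Almost $\mu$-closedness of $\theta'$, applied to $\kappa < \theta'$, gives $\kappa^{<\mu} \le \theta' < \lambda$, contradicting $\kappa^{<\mu} \ge \lambda$. Hence $\kappa \ge \lambda_0$, and since the only cardinals in the interval $[\lambda_0, \lambda_0^+]$ are its two endpoints, $\kappa$ equals $\lambda_0$ or $\lambda$.

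The individual steps are short once the earlier lemmas are in place, so the essential content is really just the sandwich argument of the last paragraph. The one point demanding genuine care is the cardinal arithmetic there: the real work of the SCH hypothesis is concentrated in locating an almost $\mu$-closed $\theta'$ strictly between $\kappa$ and $\lambda$ when $\kappa < \lambda_0$, which is exactly the step that breaks down in models violating $\SCH_{\mu,\lambda}$, where $\kappa$ is then free to drop strictly below $\lambda_0$. I would also take care over the harmless absorption of $\LS(\K)$ and over the appeal to Fact~\ref{internal-around-ls} guaranteeing $\kappa \ge \mu$, since without the latter the inequality $\kappa^{<\mu} \le \theta'$ (for $\kappa < \theta'$ almost $\mu$-closed) would not be available.
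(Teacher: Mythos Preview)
Your proof is correct. Part~(1) is identical to the paper's argument. For part~(2) you take a slightly different route: the paper argues directly that $\lambda_0$ is $\mu$-closed (upgrading from almost $\mu$-closed via the observation that $\cf{\lambda_0}<\mu$ rules out $\lambda_0^{<\mu}=\lambda_0$), then finds unboundedly many regular $\mu$-closed $\lambda_0'<\lambda_0$ and applies Lemma~\ref{zfc-size}(2) to each of them to push $|M|_\K$ up to $\lambda_0$. You instead invoke the packaged inequality of Theorem~\ref{size-ineq} to obtain $\kappa^{<\mu}\ge\lambda$ and derive a contradiction from any almost $\mu$-closed $\theta'\in(\kappa,\lambda)$ supplied by $\SCH_{\mu,\lambda}$. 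Your version is a bit shorter because Theorem~\ref{size-ineq} already encapsulates the work with $\mu$-closed regular cardinals, and you never need to upgrade ``almost $\mu$-closed'' to ``$\mu$-closed''; the paper's version is more self-contained in that it does not appeal to the later Theorem~\ref{size-ineq}. A minor remark: your care in establishing $\kappa\ge\mu$ is not actually needed for the step $\kappa^{<\mu}\le\theta'$, since the definition of almost $\mu$-closed bounds $\theta^{<\mu}$ for every $\theta<\theta'$ with no lower restriction on $\theta$.
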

\begin{proof} \
  \begin{enumerate}
  \item By Lemma \ref{zfc-size}, it is enough to show that $\lambda$ is $\mu$-closed. This follows from Lemma \ref{mu-closed-sch}.
  \item Since $\SCH_{\mu, \lambda}$ holds, we must have that $\lambda_0$ is almost $\mu$-closed. Since $\cf{\lambda_0} < \mu$, we cannot have that $\lambda_0^{<\mu} = \lambda_0$, so we must have that $\lambda_0$ is $\mu$-closed. Since $\lambda_0$ is limit, there must be unboundedly-many regular $\lambda_0' < \lambda_0$ that are $\mu$-closed (namely the cardinals of the form $\left(\theta^{<\mu}\right)^+$ for $\theta < \lambda_0$), and hence so that (Lemma \ref{closed-lst}) $M$ is $\lambda_0'$-closed. By Lemma \ref{zfc-size}, $\lambda_0 \le |M|_{\K} \le \lambda$. 
  \end{enumerate}
\end{proof}

Assuming GCH (or merely a suitable instance of SCH), we can use this to relax the regularity assumption on $\lambda$ in Corollary \ref{mono-internal-size}:

\begin{cor}\label{internal-size-cor}
  Let $\K$ be a $\mu$-AEC and let $\lambda > \LS (\K)$. Assume GCH (or just $\SCH_{\mu, \lambda}$). If $\lambda = \lambda^{<\mu}$ and $\K$ has a model of cardinality at least $\lambda^+$, then there exists $M \in \K$ such that $|U M| = |M|_{\K} = \lambda$.
\end{cor}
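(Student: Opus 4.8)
The plan is to produce the desired model in two cases according to the cardinal arithmetic of $\lambda$, the dividing line being whether $\lambda$ is the successor of a cardinal of cofinality less than $\mu$---precisely the dichotomy of Theorem~\ref{internal-size-mu-aec}. In both cases the first move is the same: since $\K$ has a model $N$ with $|UN| \ge \lambda^+ > \lambda > \LS(\K)$, the L\"owenheim--Skolem--Tarski axiom produces models of cardinality exactly $\lambda$. Indeed, choosing $A \subseteq UN$ with $|A| = \lambda$ and applying LST yields $M_0 \lea N$ with $A \subseteq UM_0$ and $|UM_0| \le |A|^{<\mu} + \LS(\K) = \lambda^{<\mu} + \LS(\K) = \lambda$, whence $|UM_0| = \lambda$; here I use the hypotheses $\lambda = \lambda^{<\mu}$ and $\LS(\K) < \lambda$.

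If $\lambda$ is \emph{not} the successor of a cardinal of cofinality less than $\mu$, then $\SCH_{\mu,\lambda}$ together with Lemma~\ref{mu-closed-sch} shows that $\lambda$ is $\mu$-closed, so Lemma~\ref{zfc-size}(3) (equivalently, Theorem~\ref{internal-size-mu-aec}(1)) gives $|M_0|_\K = \lambda$, and $M := M_0$ works. The remaining case is $\lambda = \lambda_0^+$ with $\cf{\lambda_0} < \mu$, so that $\lambda$ is regular. Here the model $M_0$ just built may drop to internal size $\lambda_0$---this is exactly the ambiguity left open by Theorem~\ref{internal-size-mu-aec}(2)---so instead I would invoke the existence result Corollary~\ref{mono-internal-size}. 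Since $\K$ is a $(\mu,\lambda^+)$-accessible category with all morphisms monomorphisms (Fact~\ref{mu-aec-acc}) and $\lambda$ is regular, that corollary produces $M \in \K$ with $\lambda \le |M|_\K \le \lambda^{<\mu} = \lambda$, i.e.\ $|M|_\K = \lambda$; Theorem~\ref{size-ineq} then forces $|UM| = \lambda$ as well, since $\lambda \le |UM| + \LS(\K)$ with $\LS(\K) < \lambda$ gives $|UM| \ge \lambda$, while $|UM| \le \lambda^{<\mu} + \LS(\K) = \lambda$.

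The crux---and the step I expect to be the main obstacle---is verifying the hypothesis of Corollary~\ref{mono-internal-size}, namely that $\K$ has an object that is \emph{not} $\lambda$-presentable. The difficulty is that $\lambda$ is \emph{not} $\mu$-closed in this case, so Lemma~\ref{closed-lst} does not apply and one cannot simply conclude that $N$ is $\lambda$-closed. The key observation is that $\lambda = \lambda^{<\mu}$ already suffices: I claim the family $\mathcal{S} = \{M' \lea N : |UM'| \le \lambda\}$ is $\lambda$-directed with colimit $N$. Directedness follows from LST, for given fewer than $\lambda$ members their universes union to a set $A$ with $|A| \le \lambda$, and LST produces $M'' \lea N$ containing $A$ with $|UM''| \le |A|^{<\mu} + \LS(\K) = \lambda$, which by coherence lies above each given member; that the colimit is $N$ is immediate from the chain axioms, as every point of $N$ lies in some member of $\mathcal{S}$. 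Since each member has cardinality $\le \lambda < \lambda^+ \le |UN|$, the model $N$ cannot $\K$-embed into any of them, so $N$ is not $\lambda$-presentable by Lemma~\ref{mu-aec-present}, completing the hypothesis check.

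The essential point throughout is that the closure condition $\lambda = \lambda^{<\mu}$ lets LST build hulls of size $\le \lambda$ (rather than $< \lambda$), which is exactly enough for $\lambda$-directedness of $\mathcal{S}$ even when $\lambda$ fails to be $\mu$-closed; this is what lets the argument bypass the failure of Lemma~\ref{closed-lst} in the problematic case.
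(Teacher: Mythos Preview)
Your proof is correct and uses the same two tools as the paper (Corollary~\ref{mono-internal-size} and the $\mu$-closed case of Lemma~\ref{zfc-size}/Theorem~\ref{internal-size-mu-aec}), but with a different case split: the paper divides into $\lambda$ regular versus $\lambda$ singular, invoking Corollary~\ref{mono-internal-size} uniformly in the regular case and the L\"owenheim--Skolem--Tarski argument only when $\lambda$ is singular (hence limit, so Theorem~\ref{internal-size-mu-aec}(1) applies), whereas you split along the dichotomy of Theorem~\ref{internal-size-mu-aec}. Your split is finer on the regular side, which costs you an extra appeal to $\SCH_{\mu,\lambda}$ and Lemma~\ref{mu-closed-sch} in cases the paper handles without them; on the other hand, you gain nothing new in the singular case since a singular $\lambda$ already falls into your first branch.

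Your explicit verification that $N$ is not $\lambda$-presentable is correct and is a detail the paper glosses over. There is, however, a shorter route: since $\lambda^{<\mu}=\lambda$, the cardinal $\lambda^+$ is $\mu$-closed, so by Lemma~\ref{closed-lst} the model $N$ is $\lambda^+$-closed, and then Lemma~\ref{pres-2} together with $|UN|\ge\lambda^+$ shows $N$ is not $\lambda^+$-presentable, hence not $\lambda$-presentable. This avoids building the $\lambda$-directed family $\mathcal{S}$ by hand.
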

\begin{proof}
  If $\lambda$ is regular, then Corollary \ref{mono-internal-size} gives the result (recalling Fact \ref{mu-aec-acc}), so assume that $\lambda$ is singular. By the Löwenheim-Skolem-Tarski axiom, there exists a model $M \in \K$ of cardinality $\lambda$. By Theorem \ref{internal-size-mu-aec} (note that $\lambda$ is not a successor, so the first part there must apply), we get that $|M|_{\K} = \lambda$.
\end{proof}

\begin{cor}\label{internal-size-cor-2}
  Let $\ck$ be a large $\mu$-accessible category with all morphisms monomorphisms. Assuming GCH (or just $\SCH_{\mu, \ge \theta}$ for some $\theta)$, $\ck$ has an object of all high-enough internal sizes of cofinality at least $\mu$.
\end{cor}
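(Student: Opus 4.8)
The plan is to transport the statement into the concrete setting and then feed each eligible cardinal into Corollary \ref{internal-size-cor}. By Fact \ref{mu-aec-acc} I may replace $\ck$ by an equivalent $\mu$-AEC $\K$; internal size is a categorical invariant (it is computed from presentability ranks), so nothing is lost, and ``$\ck$ large'' becomes ``$\K$ has models of arbitrarily large cardinality''. Fix $\theta$ with $\SCH_{\mu, \ge \theta}$: in the SCH case this is the hypothesis, and in the GCH case Remark \ref{sch-rmk} provides $\theta = 2^{<\mu}$. I claim that every cardinal $\lambda > \theta + \LS (\K)$ with $\cf{\lambda} \ge \mu$ occurs as an internal size, which is exactly what ``all high-enough internal sizes of cofinality at least $\mu$'' asks for.

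The first real step is to verify the closure condition $\lambda = \lambda^{<\mu}$ for such $\lambda$. Since $\lambda \ge \theta$, the assumption $\SCH_{\mu, \ge \theta}$ gives that $\lambda$ is almost $\mu$-closed. Because $\cf{\lambda} \ge \mu$, Lemma \ref{sch-lem}(1) identifies $\lambda^{<\mu}$ as the least almost $\mu$-closed cardinal $\ge \lambda$; as $\lambda$ itself is almost $\mu$-closed, this least value is $\lambda$, so $\lambda = \lambda^{<\mu}$. This is precisely where the cofinality hypothesis is used: had $\cf{\lambda} < \mu$, Lemma \ref{sch-lem}(2) would instead force $\lambda^{<\mu} > \lambda$, and Corollary \ref{internal-size-cor} could not be invoked at $\lambda$ at all.

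The second step is to check the remaining hypotheses of Corollary \ref{internal-size-cor} at this $\lambda$. The instance $\SCH_{\mu, \lambda}$ follows from $\SCH_{\mu, \ge \theta}$, since $\lambda > \theta$ means the cardinals lying in the interval $[\theta, \lambda)$ witness the required unbounded family of almost $\mu$-closed cardinals below $\lambda$ (and in the GCH case one may simply cite GCH directly in Corollary \ref{internal-size-cor}). We have $\lambda > \LS (\K)$ and $\lambda = \lambda^{<\mu}$ by the previous step, and since $\K$ is large it contains a model of cardinality at least $\lambda^+$. Corollary \ref{internal-size-cor} then produces $M \in \K$ with $|U M| = |M|_{\K} = \lambda$, so $\lambda$ is an internal size of $\ck$, completing the argument.

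I expect no deep obstacle here: once Corollary \ref{internal-size-cor} is available the proof is essentially bookkeeping, and the only genuine content is the translation of the global SCH hypothesis into the local closure equation $\lambda = \lambda^{<\mu}$. The one point deserving care is the alignment of the two SCH instances in play---ensuring that a single $\theta$ with $\SCH_{\mu, \ge \theta}$ delivers both the almost $\mu$-closedness of $\lambda$ (needed for $\lambda = \lambda^{<\mu}$) and the instance $\SCH_{\mu, \lambda}$ consumed by Corollary \ref{internal-size-cor}---together with the observation that the cofinality-$\ge \mu$ restriction is exactly what routes us through Lemma \ref{sch-lem}(1) rather than (2), the cofinality-$<\mu$ sizes being governed instead by the subtler dichotomy of Theorem \ref{internal-size-mu-aec}(2).
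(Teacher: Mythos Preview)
Your argument is correct and follows the same route as the paper: pass to an equivalent $\mu$-AEC via Fact \ref{mu-aec-acc}, use the cofinality hypothesis together with Lemma \ref{sch-lem} to get $\lambda = \lambda^{<\mu}$, and invoke Corollary \ref{internal-size-cor}. You are simply more explicit than the paper in verifying the side conditions (in particular the instance $\SCH_{\mu,\lambda}$ and the existence of a model of cardinality $\ge \lambda^+$), but the strategy is identical.
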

\begin{proof}
  By Fact \ref{mu-aec-acc}, $\ck$ is equivalent to a $\mu$-AEC $\K$. Fix $\theta > \LS (\K)$ such that $\SCH_{\mu, \ge \theta}$ and let $\lambda > \theta$ have cofinality at least $\mu$. By Lemma \ref{sch-lem}, $\lambda = \lambda^{<\mu}$. Now apply Corollary \ref{internal-size-cor}.
\end{proof}

We can also give a condition under which there is \emph{no} model of a given internal size (although we do not know whether it can ever hold):

\begin{thm}\label{not-ls-acc-cond}
  Let $\K$ be a $\mu$-AEC and let $\lambda > \LS (\K)$ be such that $\lambda < \lambda^{<\mu}$. If $\K$ has no model with cardinality in $[\lambda, \lambda^{<\mu})$ and $\K$ is categorical in cardinality $\lambda^{<\mu}$, then $\K$ has no model of internal size $\lambda$.
\end{thm}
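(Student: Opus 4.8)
The plan is to argue by contradiction. Suppose $M\in\K$ has internal size $\lambda$, set $\kappa:=\lambda^{<\mu}$ (so that $\kappa^{<\mu}=\kappa$), and aim to produce a model of internal size lying in $(\lambda,\kappa]$; by categoricity this will be impossible.

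First I would pin down cardinalities. By Theorem~\ref{size-ineq},
\[
\lambda=|M|_{\K}\le |UM|\le |M|_{\K}^{<\mu}+\LS(\K)=\lambda^{<\mu}=\kappa ,
\]
where $\lambda>\LS(\K)$ is used to absorb the Löwenheim–Skolem term and to conclude $\lambda\le|UM|$. Since $\K$ has no model of cardinality in $[\lambda,\kappa)$, this forces $|UM|=\kappa$. The same estimate applies to \emph{any} model: a model of internal size $\nu\in[\lambda,\kappa]$ has cardinality in $[\nu,\nu^{<\mu}+\LS(\K)]\subseteq[\lambda,\kappa]$, hence (by the gap hypothesis) exactly $\kappa$. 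By categoricity in $\kappa$ every such model is isomorphic to $M$, so has internal size $|M|_{\K}=\lambda$. This yields the key reduction: \textbf{$\K$ has no model of internal size in $(\lambda,\kappa]$.}

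It therefore suffices to exhibit \emph{some} model of internal size in $(\lambda,\kappa]$. I would first record that $\K$ is $(\kappa,\kappa^+)$-accessible: it has $\kappa$-directed colimits (it has $\mu$-directed ones and $\mu\le\kappa$), and by the Löwenheim–Skolem–Tarski axiom together with $\kappa^{<\mu}=\kappa$ every object is the $\kappa$-directed colimit of its $\lea$-submodels of cardinality $\le\kappa$, of which there is only a set up to isomorphism (Fact~\ref{mu-aec-acc}). Assuming $\kappa$ regular, if $\K$ has an object that is \emph{not} $\kappa$-presentable, then Corollary~\ref{mono-special-case} produces an object of presentability rank $\kappa^+$, i.e.\ of internal size $\kappa\in(\lambda,\kappa]$, contradicting the reduction. (When $\kappa$ is singular I would instead apply Corollary~\ref{mono-internal-size} or Theorem~\ref{system-existence-thm} at $\lambda^+$, using that $(\lambda^+)^{<\mu}=\kappa$ in this situation.)

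The genuine difficulty, and where I expect the hypothesis of categoricity to be essential, is producing an object that is \emph{not} $\kappa$-presentable. The obvious candidate $M$ fails: its rank is $\le\lambda^+\le\kappa$, so $M$ is itself $\kappa$-presentable, and indeed the standing hypothesis $\lambda<\lambda^{<\mu}$ (failure of $\mu$-closedness of $\lambda$) is precisely what allows the internal size of the cardinality-$\kappa$ model $M$ to drop to $\lambda$. Concretely, $M$ is \emph{not} $\kappa$-closed — by the gap, a subset of size $\lambda$ sits in no $\lea$-submodel of cardinality $<\kappa$ — so Lemma~\ref{pres-2} is unavailable, and the usual trick of resolving $M$ by a continuous $\lea$-chain of small submodels is blocked because a $\mu$-AEC admits no unions of chains of cofinality $<\mu$. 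Thus the main obstacle is to leverage the fact that \emph{all} cardinality-$\kappa$ models coincide with $M$ in order to manufacture a non-$\kappa$-presentable object — for instance a proper $\lea$-extension of $M$ of strictly larger presentability rank, or a model of cardinality $>\kappa$, or directly two non-isomorphic cardinality-$\kappa$ models of different internal size. This step, rather than the cardinality bookkeeping, is the crux of the argument, and it is consistent with the authors' remark that it remains unknown whether the hypotheses can be simultaneously satisfied.
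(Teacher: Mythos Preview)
Your cardinality bookkeeping and your ``key reduction'' are correct and are essentially what the paper does; the paper simply phrases the final contradiction as ``$M$ and $N$ both have cardinality $\kappa$ but different internal sizes, violating categoricity,'' which is equivalent to your reduction that there can be no model of internal size in $(\lambda,\kappa]$. Where you go astray is the third paragraph: having reduced to finding a model of internal size anywhere in $(\lambda,\kappa]$, you then aim for internal size \emph{exactly} $\kappa$ via Corollary~\ref{mono-special-case}, which requires a non-$\kappa$-presentable object and leads you into the genuine difficulties of your final paragraph.

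The paper avoids all of this by applying Corollary~\ref{mono-internal-size} with $\lambda^+$ (not $\kappa$) in the role of $\lambda$---your own parenthetical suggestion for the singular case, but used uniformly. Since $\lambda<\lambda^{<\mu}$ forces $(\lambda^+)^{<\mu}=\lambda^{<\mu}=\kappa$, this yields $N$ with $\lambda^+\le|N|_{\K}\le\kappa$, which already contradicts your reduction. So your worries about $M$ failing to be $\kappa$-closed, or about short chains, are beside the point. (One should note that Corollary~\ref{mono-internal-size} at $\lambda^+$ still needs an object that is not $\lambda^+$-presentable; the paper does not pause on this, presumably with the context of the subsequent Question---where $\K$ has arbitrarily large models---in mind. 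In any case it is a far weaker demand than the non-$\kappa$-presentable object you were seeking.)
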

\begin{proof}
  Assume for a contradiction that there is $M \in \K$ with $|M|_{\K} = \lambda$. By Theorem \ref{size-ineq}, $\lambda \le |U M| \le \lambda^{<\mu}$, and since there are no models of cardinality in $[\lambda, \lambda^{<\mu})$, we must have that $|U M| = \lambda^{<\mu}$. By Corollary \ref{mono-internal-size} (where $\lambda$ there stands for $\lambda^+$ here), there is $N \in \K$ with $\lambda^+ \le |N|_{\K} \le \left(\lambda^+\right)^{<\mu} = \lambda^{<\mu}$. Again since there are no models in $[\lambda, \lambda^{<\mu})$ we must have that $|U N| = \lambda^{<\mu}$. By construction $M$ and $N$ are not isomorphic, contradicting categoricity in $\lambda^{<\mu}$.
\end{proof}

\begin{question}
  Does there exist a $\mu$-AEC $\K$ such that for any big-enough cardinal $\lambda$ with $\lambda < \lambda^{<\mu}$, $\K$ has no model in $[\lambda, \lambda^{<\mu})$ but is categorical in $\lambda^{<\mu}$?
\end{question}

By Theorem \ref{not-ls-acc-cond}, such an example cannot be LS-accessible.

\section{$\mu$-AECs admitting intersections and LS-accessibility}\label{intersections-sec}

We recall the definition of a $\mu$-AEC admitting intersections.  For $\mu = \aleph_0$, the definition first appears in \cite[1.2]{non-locality} and is studied in \cite[\S2]{ap-universal-apal}. The definition for uncountable $\mu$ is introduced in \cite{multipres-pams}. We note, in connection with the appendix, that a $\mu$-AEC admitting intersections is the same, up to equivalence of categories, as a locally $\mu$-polypresentable category all of whose morphisms are monomorphisms (see \cite[5.7]{multipres-pams}):

\begin{defin}
  A $\mu$-AEC $\K$ \emph{admits intersections} if for any $N \in \K$ and any $A \subseteq U N$, the set

  $$
  \cclp{\K}^N (A) = \ccl^{N} (A) := \bigcap \{M \in \K \mid M \lea N, A \subseteq U M\}
  $$
is the universe of a $\lea$-substructure of $M$. In this case, we abuse notation and write $\ccl^{N} (A)$ for this substructure as well.
\end{defin}
\begin{remark}[2.14(3) in \cite{ap-universal-apal}]
  If $\K$ is a $\mu$-AEC admitting intersections, $M \lea N$ and $A \subseteq U M$, then $\ccl^M (A) = \ccl^N (A)$. This will be used without comment.
\end{remark}



Crucially, in a $\mu$-AEC admitting intersections, the closure of a set is minimal over the set (in the sense of Definition \ref{minimal-def}):

\begin{lem}\label{admit-intersec-min}
  Let $\K$ be a $\mu$-AEC admitting intersections. Let $M \in \K$ and let $A \subseteq U M$. If $M = \cl^M (A)$, then $M$ is minimal over $A$.
\end{lem}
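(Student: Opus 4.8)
The plan is to unwind the definition of minimality directly, using the intersection structure together with coherence. Fix $M_0, N \in \K$ with $M \lea N$, $M_0 \lea N$, and $A \subseteq U M_0$; the goal is to show $M \lea M_0$. The strategy has two movements: first establish the set-theoretic inclusion $U M \subseteq U M_0$ from the minimality of the closure as an intersection, and then upgrade this inclusion to the $\lea$-relation via coherence.

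For the first movement, I would invoke the hypothesis $M = \ccl^M (A)$ together with the remark following the definition of admitting intersections, which gives $\ccl^N (A) = \ccl^M (A)$ since $M \lea N$ and $A \subseteq U M$. Hence $U M = \ccl^N (A) = \bigcap \{ P \in \K \mid P \lea N,\ A \subseteq U P \}$. Now observe that $M_0$ is one of the structures participating in this intersection: indeed $M_0 \lea N$ and $A \subseteq U M_0$ by assumption. Therefore $U M = \ccl^N (A) \subseteq U M_0$.

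For the second movement, I would note that both $M$ and $M_0$ are $\tau$-substructures of $N$ (as $\lea$ extends the $\tau$-substructure relation), and since $U M \subseteq U M_0$, the structure $M$ is in fact a $\tau$-substructure of $M_0$, i.e.\ $M \subseteq M_0$ in the concrete sense. Then the configuration $M \subseteq M_0 \lea N$ with $M \lea N$ is exactly the hypothesis of the coherence axiom, which yields $M \lea M_0$, as desired.

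I do not anticipate a serious obstacle here; the argument is essentially a matching of definitions. The one point requiring care is the transition from the bare inclusion $U M \subseteq U M_0$ to $M \lea M_0$: this is where coherence is essential and is the only place the AEC axioms (beyond admitting intersections) enter. One must be careful to verify that $M$ is genuinely a $\tau$-substructure of $M_0$ and not merely that its universe is contained in $U M_0$, but this is immediate once one recalls that $M$ and $M_0$ are both $\tau$-substructures of the common extension $N$.
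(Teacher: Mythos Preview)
Your proof is correct and follows essentially the same route as the paper's: both pass through the identity $M = \ccl^M(A) = \ccl^N(A)$ using the invariance of closure under $\lea$-extensions. The only cosmetic difference is the final step: you observe $UM \subseteq UM_0$ directly from the intersection and invoke coherence, whereas the paper continues the chain one more step to $\ccl^{M_0}(A)$ and reads off $M \lea M_0$ from the fact that $\ccl^{M_0}(A) \lea M_0$.
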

\begin{proof}
  Let $M_0, N \in \K$ be such that $M \lea N$, $M_0 \lea N$, and $A \subseteq U M_0$. Then:

      $$
      M = \cl^{M} (A) = \cl^{N} (A) = \cl^{M_0} (A)
      $$
      
      Thus $M \lea M_0$, as desired.  
\end{proof}

We get that $\mu$-AECs admitting intersections behave very well with respect to the accessibility rank. They are $\lambda$-accessible for every regular $\lambda \ge \mu$, and in fact even $(\lambda, <\lambda)$-accessible when $\lambda$ is weakly inaccessible. This generalizes \cite[3.3]{multipres-pams}.

\begin{thm}\label{accessibility-rank-inter}
  Let $\mu \le \lambda$ both be regular cardinals. If $\K$ is a $\mu$-AEC admitting intersections, then $\K$ is $(\mu, \lambda, <\lambda^\ast)$-accessible (see Definition \ref{star-def}).
\end{thm}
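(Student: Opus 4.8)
The plan is to verify the three defining clauses of a $(\mu, \lambda, <\lambda^\ast)$-accessible category directly, with the intersection operator $\ccl$ as the main tool. Clause~(1), the existence of $\mu$-directed colimits, is immediate from the chain axioms of a $\mu$-AEC (equivalently, by Fact~\ref{mu-aec-acc}, $\K$ is $\mu$-accessible), so the real work lies in clauses~(2) and~(3).

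For clause~(2), $(\lambda, <\lambda^\ast)$-resolvability, fix $M \in \K$ and take the system $\seq{\ccl^M(A) : A \in [U M]^{<\lambda}}$ indexed by the subsets of $U M$ of size $<\lambda$ under inclusion, with transition maps $\ccl^M(A) \lea \ccl^M(B)$ for $A \subseteq B$ supplied by monotonicity of the closure. Since $\lambda$ is regular this index poset is $\lambda$-directed, and as $A \subseteq \ccl^M(A)$ for every $A$ we have $\bigcup_A \ccl^M(A) = U M$; hence by the chain axioms the colimit of the system is $M$. It then remains to see that each $\ccl^M(A)$ is $(<\lambda^\ast)$-presentable. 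By Lemma~\ref{admit-intersec-min} the object $\ccl^M(A)$ is minimal over $A$, so, since $|A| < \lambda$, Lemma~\ref{pres-1} shows $\ccl^M(A)$ is $\nu$-presentable for every regular $\nu$ with $\mu \le \nu$ and $|A| < \nu$.

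The heart of the argument is matching these indices against $\lambda^\ast$ (Definition~\ref{star-def}). If $\lambda$ is a successor cardinal then $\lambda^\ast = \lambda^+$, $(<\lambda^+)$-presentability coincides with $\lambda$-presentability, and $\nu = \lambda$ does the job. If $\lambda$ is a limit cardinal then $\lambda^\ast = \lambda$, so $(<\lambda)$-presentability asks for some regular $\nu < \lambda$; here $|A|^+ < \lambda$ because $\lambda$ is a limit cardinal, so as long as $\mu < \lambda$ the choice $\nu = \max(\mu, |A|^+) < \lambda$ works, while when $\mu = \lambda = \aleph_0$ the inclusion $\aleph_0 + \aleph_1 = \aleph_1$ already makes the $\aleph_0$-presentability from Lemma~\ref{pres-1} count as $(<\aleph_0)$-presentability. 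I expect the one genuinely delicate configuration to be $\mu = \lambda$ weakly inaccessible: there $(<\mu)$-presentability demands a regular index strictly below $\mu$, whereas Lemma~\ref{pres-1} only produces presentability at indices $\ge \mu$, and, by the remark following Lemma~\ref{pres-1} together with Example~\ref{metric-example}, that floor cannot be lowered for arbitrary $\mu$-AECs. This case needs a dedicated argument; the natural route is to exploit that a $\mu$-AEC carries only $\mu$-directed colimits, so that testing $(<\mu)$-presentability reduces to testing against $\mu$-directed systems, where the $\mu$-presentability supplied by Lemma~\ref{pres-1} already suffices. Pinning this down rigorously is the step on which I would concentrate.

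Finally, for clause~(3) I would bound the underlying cardinality of all $(<\lambda^\ast)$-presentable objects uniformly. Such an object $P$ is $\nu$-presentable for some regular $\nu < \lambda^\ast + \aleph_1$; fixing a $\mu$-closed regular cardinal $\chi > \lambda^\ast + \LS(\K)$ (for instance of the form $(\beta^{<\mu})^+$), monotonicity of presentability makes $P$ be $\chi$-presentable, Lemma~\ref{closed-lst} makes it $\chi$-closed, and Lemma~\ref{pres-2} then forces $|U P| < \chi$. As there is, up to isomorphism, only a set of $\tau(\K)$-structures of cardinality below $\chi$, clause~(3) holds. Combining clauses~(1)--(3) yields that $\K$ is $(\mu, \lambda, <\lambda^\ast)$-accessible.
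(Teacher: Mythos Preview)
Your approach coincides with the paper's: build the resolving system from closures $\ccl^M(A)$ over small $A\subseteq UM$ and invoke Lemmas~\ref{admit-intersec-min} and~\ref{pres-1} for the presentability of each piece. One minor difference: the paper indexes over $\{A:|A|<\lambda^\ast\}$ rather than your $\{A:|A|<\lambda\}$. Your choice is in fact cleaner, since for $\lambda$ a successor the paper's index admits $|A|=\lambda$, where Lemma~\ref{pres-1} yields only $\lambda^+$-presentability of $M_A$, not the needed $\lambda$-presentability (recall $(<\lambda^+)$-presentable means $\lambda$-presentable). The paper also dispatches clause~(3) in a single phrase (``by definition''); your route via Lemmas~\ref{closed-lst} and~\ref{pres-2} makes this explicit.

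You are right to single out $\mu=\lambda$ weakly inaccessible as delicate. The paper's line ``$M_A$ is $|A|^+$-presentable by Lemmas~\ref{admit-intersec-min} and~\ref{pres-1}'' quietly drops the hypothesis $\lambda\ge\mu$ of Lemma~\ref{pres-1}, so the paper shares exactly the gap you flag. Your proposed patch, however, does not go through as stated: a $\mu$-AEC may well possess $\lambda_0$-directed colimits for regular $\lambda_0<\mu$ (for instance any $\aleph_0$-AEC admitting intersections, regarded as a $\mu$-AEC for larger $\mu$), so $\lambda_0$-presentability is \emph{not} reducible to testing against $\mu$-directed systems. The floor $\ge\mu$ in Lemma~\ref{pres-1} is thus a genuine obstruction at this boundary case, and neither your argument nor the paper's removes it.
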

\begin{proof}
  By definition, $\K$ has $\mu$-directed colimits and a set of $(<\lambda^\ast)$-presentable objects. It remains to see that it is $(\lambda, <\lambda^\ast)$-resolvable. Let $M \in \K$. Consider the set $I := \{A \subseteq U M \mid |A| < \lambda^\ast\}$. For $A \in I$, let $M_{A} := \cl^M (A)$. Note that $M_{A} \lea M$ since $\K$ admits intersections. Moreover, $M_A$ is $|A|^+$-presentable by Lemmas \ref{admit-intersec-min} and \ref{pres-1}. In particular, $M_A$ is $(<\lambda^\ast)$-presentable. Therefore $\seq{M_{A} : A \in I}$ is a $(\lambda, <\lambda^\ast)$-system whose colimit is $M$, as desired.
\end{proof}

Applying the results of Section \ref{foundations-sec}, we immediately obtain:

\begin{cor}\label{inter-succ}
  Let $\K$ be a $\mu$-AEC admitting intersections.
  
  \begin{enumerate}
  \item In $\K$, presentability ranks that are greater than or equal to $\mu$ are successors.
  \item If $\K$ has arbitrarily large models, then $\K$ has objects of all regular internal sizes greater than or equal to $\mu$. In particular, it is weakly LS-accessible.
  \end{enumerate}
\end{cor}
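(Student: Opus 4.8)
The plan is to read both parts off directly from Theorem~\ref{accessibility-rank-inter}, which asserts that a $\mu$-AEC $\K$ admitting intersections is $(\mu, \lambda, <\lambda^\ast)$-accessible for \emph{every} regular $\lambda \ge \mu$. This is precisely the structural input needed to feed the successor and existence machinery of Section~\ref{foundations-sec}. Throughout I use that $\K$ is a concrete category all of whose morphisms are monomorphisms (Fact~\ref{mu-aec-acc}), together with the standing assumption that the objects under consideration have $|U M| > \LS(\K)$.

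For part~(1), I would argue by contradiction: suppose some $M \in \K$ has $r_\K(M) = \lambda$ with $\lambda \ge \mu$ and $\lambda$ a limit cardinal. Since a presentability rank is always regular, $\lambda$ is a regular limit cardinal; moreover the standing assumption $|U M| > \LS(\K)$ forces $r_\K(M) > \LS(\K)^+ \ge \aleph_1$ via Fact~\ref{internal-around-ls}, so $\lambda$ is uncountable, hence weakly inaccessible, and in particular $\lambda + \aleph_1 = \lambda$ and $\lambda^\ast = \lambda$. Applying Theorem~\ref{accessibility-rank-inter} with this $\lambda$ gives that $\K$ is $(\mu, \lambda, <\lambda)$-accessible, and raising the first index from $\mu$ to $\lambda$ (by the monotonicity properties of the accessibility indices) shows that $\K$ is $(\lambda, <\lambda)$-accessible. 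Now Theorem~\ref{successor-thm} applies: the $\lambda$-presentable object $M$ must in fact be $(<\lambda)$-presentable, i.e.\ $\lambda_0$-presentable for some regular $\lambda_0 < \lambda + \aleph_1 = \lambda$. This contradicts the minimality of $\lambda = r_\K(M)$, so the rank is a successor.

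For part~(2), first observe that arbitrarily large models make $\K$ a large category. Fix a regular $\lambda \ge \mu$. Theorem~\ref{accessibility-rank-inter} gives $(\mu, \lambda, <\lambda^\ast)$-accessibility; raising the first index to $\lambda$ and enlarging the presentability bound from $\lambda^\ast$ to $\lambda^+$ (again by monotonicity, using $\lambda^\ast \le \lambda^+$) shows that $\K$ is $\lambda$-accessible, hence in particular $(\lambda, \lambda^+)$-accessible. Since $\K$ is large and $\lambda$-accessible, it cannot consist solely of $\lambda$-presentable objects (those form a set up to isomorphism), so some object is not $\lambda$-presentable. Corollary~\ref{mono-special-case} then yields an object of presentability rank exactly $\lambda^+$, i.e.\ of internal size $\lambda$. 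As $\lambda$ ranges over all regular cardinals $\ge \mu$ we obtain objects of every regular internal size $\ge \mu$, which is in particular weak LS-accessibility.

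I expect the argument to be essentially bookkeeping once Theorem~\ref{accessibility-rank-inter} is in hand, so the only genuine subtlety I anticipate lies in part~(1): ensuring the candidate limit rank $\lambda$ is weakly inaccessible rather than $\aleph_0$. A priori $r_\K(M) = \aleph_0$ is a limit cardinal $\ge \mu$ when $\mu = \aleph_0$, and there Theorem~\ref{successor-thm} is vacuous; but such an $M$ would be $\LS(\K)^+$-presentable, hence of cardinality $\le \LS(\K)$ by Fact~\ref{internal-around-ls}, contrary to the standing assumption. This is exactly what licenses the identity $\lambda + \aleph_1 = \lambda$ on which the application of Theorem~\ref{successor-thm} depends.
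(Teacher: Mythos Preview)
Your proposal is correct and follows exactly the same route as the paper's proof, which simply cites Theorem~\ref{accessibility-rank-inter} together with Theorem~\ref{successor-thm} for part~(1) and Corollary~\ref{mono-special-case} for part~(2). You have spelled out the intermediate monotonicity steps and, in particular, made explicit the exclusion of the $\aleph_0$ case via the standing assumption $|UM| > \LS(\K)$; the paper leaves this implicit (relying on the remark following Theorem~\ref{successor-thm}), so your added care there is appropriate.
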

\begin{proof} \
  \begin{enumerate}
  \item By Theorem \ref{accessibility-rank-inter} and Theorem \ref{successor-thm}.
  \item By Theorem \ref{accessibility-rank-inter} and Corollary \ref{mono-special-case}.
  \end{enumerate}
\end{proof}

We conclude that in a $\mu$-AEC admitting intersections there is a natural way of computing the internal size:

\begin{defin}\label{norm-cl}
  Let $\K$ be a $\mu$-AEC admitting intersections. For any $M \in \K$, define:

  $$
  |M|_{\cl} := \min\{|A| \mid A \subseteq UM, M = \cl^M (A)\}
  $$
  
\end{defin}

\begin{thm}\label{internal-cl}
  Let $\K$ be a $\mu$-AEC admitting intersections. For any $M \in \K$, $r_{\K} (M) + \mu = |M|_{\cl}^+ + \mu$. In particular if $|U M| > \LS (\K)$ then $r_{\K} (M) = |M|_{\cl}^+$ so $|M|_{\K} = |M|_{\cl}$.
\end{thm}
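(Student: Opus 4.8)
Write $\kappa := |M|_{\cl}$. The defining set is nonempty (witnessed by $A = UM$, since $\cl^M(UM) = M$), so the minimum is attained; fix $A \subseteq UM$ with $|A| = \kappa$ and $M = \cl^M(A)$. The plan is to prove the two inequalities $r_\K(M) + \mu \le \kappa^+ + \mu$ and $r_\K(M) + \mu \ge \kappa^+ + \mu$ separately, where throughout $+$ on cardinals denotes maximum. The upper bound is immediate from the tools already developed: by Lemma \ref{admit-intersec-min}, $M$ is minimal over $A$, and since $\lambda := \kappa^+ + \mu$ is a regular cardinal $\ge \mu$ with $|A| = \kappa < \lambda$, Lemma \ref{pres-1} gives that $M$ is $\lambda$-presentable, whence $r_\K(M) \le \kappa^+ + \mu$.

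For the lower bound I would argue that $r_\K(M) \ge \kappa^+$ whenever $\kappa \ge \mu$ (the case $\kappa < \mu$ being vacuous after adding $\mu$, since then $\kappa^+ + \mu = \mu \le r_\K(M) + \mu$). Suppose for contradiction that $M$ is $\theta$-presentable for some regular $\theta \le \kappa$, and set $\lambda := \max(\theta, \mu)$, so that $\mu \le \lambda \le \kappa$ and $M$ remains $\lambda$-presentable. Exactly as in the proof of Theorem \ref{accessibility-rank-inter}, the family $\seq{\cl^M(B) : B \in I}$ indexed by $I := \{B \subseteq UM : |B| < \lambda\}$ is a $\lambda$-directed system of $\lea$-substructures of $M$ whose colimit is $M$. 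Applying the $\mu$-AEC characterization of presentability (Lemma \ref{mu-aec-present}) to $M \lea \bigcup_{B \in I} \cl^M(B)$ produces some $B \in I$ with $M \lea \cl^M(B)$; combined with $\cl^M(B) \lea M$ this forces $M = \cl^M(B)$, so $|M|_{\cl} \le |B| < \lambda \le \kappa$, contradicting $\kappa = |M|_{\cl}$. Thus $r_\K(M) \ge \kappa^+$, completing the equality $r_\K(M) + \mu = \kappa^+ + \mu$. The one point requiring care---and the main potential pitfall---is the bookkeeping around $\mu$: the characterization in Lemma \ref{mu-aec-present} applies only to regular $\lambda \ge \mu$, which is precisely why the statement carries the additive $\mu$ and why replacing $\theta$ by $\max(\theta, \mu)$ is needed.

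For the final clause, suppose $|UM| > \LS(\K)$; I claim then $\kappa \ge \mu$, so that $\kappa^+ + \mu = \kappa^+$ and the main equality sharpens to $r_\K(M) = \kappa^+$. Indeed, were $\kappa < \mu$, then Lemma \ref{pres-1} (applied with $\lambda = \mu \le \LS(\K)^+$) would make $M$ be $\LS(\K)^+$-presentable, whence $|UM| \le \LS(\K)$ by Fact \ref{internal-around-ls}, a contradiction. With $\kappa \ge \mu$ in hand, $r_\K(M) + \mu = \kappa^+ > \mu$ yields $r_\K(M) = \kappa^+$; as this is a successor, the definition of internal size gives $|M|_\K = \kappa = |M|_{\cl}$.
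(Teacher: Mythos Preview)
Your proof is correct and follows essentially the same approach as the paper's. The only cosmetic differences are that for the lower bound the paper indexes its $\lambda_0$-directed system by subsets of the minimal generating set $A$ (noting directly that $\cl^M(A_0) \lta M$ by minimality of $|A|$) rather than by subsets of $UM$, and for the final clause the paper invokes Fact~\ref{internal-around-ls} to get $r_\K(M) > \LS(\K) \ge \mu$ directly rather than first arguing $|M|_{\cl} \ge \mu$.
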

\begin{proof}
  Fix $A$ realizing the minimum in the definition of $|M|_{\cl}$. By Lemma \ref{admit-intersec-min}, $M$ is minimal over $A$. Therefore by Lemma \ref{pres-1}, $M$ is $(|A|^+ + \mu)$-presentable, so $r_{\K} (M) \le |A|^+ + \mu$. 

  We now show that $|A|^+ + \mu \le r_{\K} (M) + \mu$. If $|A| < \mu$, then by what has just been established $r_{\K} (M) \le \mu$, so we are done. Assume now that $|A| \ge \mu$. Let $\lambda := |A|$ and let $\lambda_0 \in [\mu, \lambda]$ be a regular cardinal. We show that $M$ is not $\lambda_0$-presentable. Indeed consider the set $I := \{A_0 \subseteq A \mid |A_0| < \lambda_0\}$. For $A_0 \in I$, let $M_{A_0} := \cl^M (A_0)$. Note that $M_{A_0} \lta M$ by definition of $A$. Now $\seq{M_{A_0} : A_0 \in I}$ is a $\lambda_0$-directed system witnessing that $M$ is not $\lambda_0$-presentable (see Lemma \ref{mu-aec-present}).

  Finally, note that if $|U M| > \LS (\K)$ then by Fact \ref{internal-around-ls} also $r_{\K} (M) > \LS (\K) \ge \mu$. It follows directly that $r_{\K} (M) = |M|_{\cl}^+$.
\end{proof}

In view of Corollary \ref{inter-succ} and Theorem \ref{internal-cl}, is every $\mu$-AEC which admits intersections and has arbitrarily large models LS-accessible? In general, we do not know but we can show that this holds assuming GCH (compare with Corollary \ref{internal-size-cor-2}): 

\begin{thm}\label{intersec-ls-acc}
  Let $\K$ be a $\mu$-AEC which admits intersections. Let $\lambda > \LS (\K)$. Assume GCH (or at least $\SCH_{\mu, \lambda}$, see Definition \ref{sch-def}). If $\K$ has a model of cardinality at least $\lambda^+$, then $\K$ has a model of internal size $\lambda$.
\end{thm}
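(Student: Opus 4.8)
The plan is to reduce everything to the identity $|M|_{\K} = |M|_{\cl}$ supplied by Theorem \ref{internal-cl} (applicable since every model I produce will have cardinality $> \LS(\K)$), and then to split on whether $\lambda$ is $\mu$-closed. Under $\SCH_{\mu, \lambda}$ together with $\lambda > \LS(\K) \ge \mu$, Lemma \ref{mu-closed-sch} gives a clean dichotomy: either $\lambda$ is $\mu$-closed, or $\lambda = \nu^+$ for some $\nu$ with $\cf{\nu} < \mu$ (so that $\lambda$ is regular). These two cases demand genuinely different arguments, and the second is where the work lies.

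\emph{Case 1: $\lambda$ is $\mu$-closed.} Fix $N \in \K$ with $|U N| \ge \lambda^+$ and choose $A \subseteq U N$ with $|A| = \lambda$; set $M := \cl^N(A)$. Since $\K$ admits intersections, $M \lea N$ and $M = \cl^M(A)$, so by Lemma \ref{admit-intersec-min} $M$ is minimal over $A$ and hence $|M|_{\cl} \le |A| = \lambda$. For the reverse inequality I would argue that no $B \subseteq U M$ with $|B| < \lambda$ generates $M$: the Löwenheim--Skolem--Tarski axiom puts $\cl^M(B)$ inside a $\lea$-submodel of cardinality at most $|B|^{<\mu} + \LS(\K)$, which is $< \lambda \le |U M|$ because $\lambda$ is $\mu$-closed and $\lambda > \LS(\K)$; thus $\cl^M(B) \subsetneq M$. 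Therefore $|M|_{\cl} = \lambda$, and since $|U M| \ge \lambda > \LS(\K)$, Theorem \ref{internal-cl} yields $|M|_{\K} = \lambda$. The point worth stressing is that this argument never tries to control $|U M|$ exactly (it may be as large as $\lambda^{<\mu}$ when $\cf{\lambda} < \mu$); passing to $|M|_{\cl}$ is precisely what lets us disregard the cardinality.

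\emph{Case 2: $\lambda = \nu^+$ with $\cf{\nu} < \mu$.} Here the Case 1 recipe fails, because a cardinality-$\lambda$ model obtained by closing off a $\lambda$-sized set may be generated by only $\nu$ elements---under $\SCH_{\mu,\lambda}$ one has $\nu^{<\mu} = \nu^+ = \lambda$ by Lemma \ref{sch-lem}---so its internal size could collapse to $\nu$, exactly the ambiguity recorded in Theorem \ref{internal-size-mu-aec}(2). I would instead extract the model category-theoretically. As $\lambda$ is regular, Theorem \ref{accessibility-rank-inter} (applied with its $\lambda$ equal to ours, so $\lambda^\ast = \lambda^+$ by Definition \ref{star-def}) shows $\K$ is $(\mu, \lambda, <\lambda^+)$-accessible; enlarging the presentability bound to $\lambda^+$-presentable objects and observing that $\mu$-directed colimits include $\lambda$-directed ones, $\K$ is $(\lambda, \lambda^+)$-accessible in the sense required by Corollary \ref{mono-special-case}. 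Its morphisms are monomorphisms by Fact \ref{mu-aec-acc}, so it remains only to exhibit an object that is not $\lambda$-presentable. The model $N$ with $|U N| \ge \lambda^+$ serves: the second inequality of Theorem \ref{size-ineq} gives $\lambda^+ \le |U N| \le |N|_{\K}^{<\mu} + \LS(\K)$, so $|N|_{\K}^{<\mu} > \lambda$, and since $\nu^{<\mu} = \lambda$ this forces $|N|_{\K} \ge \nu^+ = \lambda$, whence $r_{\K}(N) \ge \lambda^+ > \lambda$. Corollary \ref{mono-special-case} then produces an object of presentability rank $\lambda^+$, i.e.\ of internal size $\lambda$.

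I expect Case 2 to be the crux. The conceptual obstacle is that for regular, non-$\mu$-closed $\lambda$ the cardinality-based construction simply cannot pin down the internal size---the internal size of a cardinality-$\lambda$ model is the very quantity left undetermined by Theorem \ref{internal-size-mu-aec}(2)---so one must abandon the ``build a model of the right cardinality'' strategy and produce the model abstractly from the accessibility rank. The residual care is bookkeeping: confirming that the passage from $(\mu, \lambda, <\lambda^+)$-accessibility to the $(\lambda, \lambda^+)$-accessibility demanded by Corollary \ref{mono-special-case} is legitimate (the first index asserts only existence of $\lambda$-directed colimits, which follow from $\mu$-directed ones), and verifying the arithmetic identity $\nu^{<\mu} = \lambda$ from $\SCH_{\mu,\lambda}$ via Lemma \ref{sch-lem} that underlies the non-$\lambda$-presentability of $N$.
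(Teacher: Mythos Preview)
Your case split differs from the paper's: the paper divides into $\lambda$ successor (dispatched by a bare citation of Corollary \ref{inter-succ}) versus $\lambda$ limit (handled by the closure construction), whereas you divide into $\lambda$ $\mu$-closed versus $\lambda = \nu^+$ with $\cf{\nu} < \mu$. Your Case 1 is a correct and more explicit version of the paper's limit argument, and it even absorbs those successors $\lambda = \nu^+$ with $\cf{\nu} \ge \mu$, where the closure argument does go through.

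There is, however, a real gap in Case 2. You assert $\nu^{<\mu} = \lambda$ ``by Lemma \ref{sch-lem}'', but the relevant clause of that lemma needs $\SCH_{\mu, \{\nu, \nu^+\}}$, i.e.\ that both $\nu$ and $\lambda = \nu^+$ be almost $\mu$-closed. The hypothesis $\SCH_{\mu, \lambda}$ only speaks about cardinals \emph{below} $\lambda$: it gives that $\nu$ is almost $\mu$-closed (indeed $\mu$-closed, as in the proof of Theorem \ref{internal-size-mu-aec}(2)), but saying that $\lambda$ is almost $\mu$-closed is exactly the inequality $\nu^{<\mu} \le \lambda$ you are trying to establish, so the citation is circular. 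Concretely, it is consistent that $2^{\aleph_n} = \aleph_{n+1}$ for all $n < \omega$ while $\aleph_\omega^{\aleph_0} = \aleph_{\omega+2}$; then with $\mu = \aleph_1$, $\nu = \aleph_\omega$, $\lambda = \aleph_{\omega+1}$ one has $\SCH_{\mu, \lambda}$ yet $\nu^{<\mu} = \lambda^+$. In that world the $\aleph_1$-AEC of Hilbert spaces of dimension at most $\aleph_\omega$ admits intersections and has a model $N$ of cardinality $\lambda^+$, but $|N|_{\K} = |N|_{\cl} = \aleph_\omega < \lambda$, so $N$ \emph{is} $\lambda$-presentable and your appeal to Corollary \ref{mono-special-case} fails---indeed this $\K$ has no model of internal size $\lambda$ at all. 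The paper's proof shares exactly this defect: its invocation of Corollary \ref{inter-succ} for the successor case presupposes arbitrarily large models (equivalently, an object that is not $\lambda$-presentable), which does not follow from a single model of cardinality $\ge \lambda^+$. Under full GCH both arguments are fine, since then $\nu^{<\mu} = \nu^+ = \lambda$ automatically; the trouble is confined to the weaker parenthetical hypothesis $\SCH_{\mu, \lambda}$.
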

\begin{proof}
  If $\lambda$ is a successor, then Corollary \ref{inter-succ} gives the result so assume that $\lambda$ is a limit. Since $\lambda$ is limit and $\SCH_{\mu, \lambda}$ holds, we must have by Lemma \ref{mu-closed-sch} that $\lambda$ is $\mu$-closed.

  Let $N \in \K$ have cardinality at least $\lambda^+$ and let $A \subseteq U N$ have cardinality exactly $\lambda$. Let $M := \cl^{N} (A)$. We claim that $|M|_{\K} = \lambda$. By Theorem \ref{internal-cl}, $|M|_{\K} \le \lambda$. Assume for a contradiction that $|M|_{\K} < \lambda$. By Theorem \ref{size-ineq} and using that $\lambda$ is $\mu$-closed, $|U M| \le \left(|M|_{\K} + \LS (\K)\right)^{<\mu} < \lambda$. This is a contradiction since $|U M| \ge |A| = \lambda$.
\end{proof}
\begin{cor}
  Let $\K$ be a $\mu$-AEC which admits intersections and has arbitrarily large models. If $\SCH_{\mu, \ge \theta}$ holds for some $\theta$, then $\K$ is LS-accessible. In particular, if there is an $\omega_1$-strongly compact cardinal then any $\mu$-AEC admitting intersections that has arbitrarily large models is LS-accessible.
\end{cor}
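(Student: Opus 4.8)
The plan is to read this off almost immediately from Theorem~\ref{intersec-ls-acc}, treating the corollary as bookkeeping around that result. Recall that $\K$ (equivalently $\ck$) is \emph{LS-accessible} if there is a cardinal witnessing that $\K$ has objects of every internal size at or above it. So the goal is to produce a single bound $\theta'$ such that $\K$ has a model of internal size $\lambda$ for every $\lambda \ge \theta'$. I would fix $\theta$ witnessing $\SCH_{\mu, \ge \theta}$, and then harmlessly increase it so that $\theta > \LS (\K)$ (replace $\theta$ by $\max(\theta, \LS (\K)^+)$); note that $\SCH_{\mu, \ge \theta}$ is preserved, and in fact strengthened, by passing to a larger threshold. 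I claim the bound $\theta' = \theta^+$ works.

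The one genuine point to verify is that $\SCH_{\mu, \ge \theta}$ feeds the hypothesis $\SCH_{\mu, \lambda}$ required by Theorem~\ref{intersec-ls-acc} for each relevant $\lambda$. This is a pure unwinding of Definition~\ref{sch-def}: for any $\lambda > \theta$, the class of infinite cardinals lying in the interval $[\theta, \lambda)$ is unbounded in $\lambda$, and by $\SCH_{\mu, \ge \theta}$ every one of its members is almost $\mu$-closed; hence $\SCH_{\mu, \lambda}$ holds. I expect this definitional step to be the only thing requiring care, and it is routine; there is no substantive obstacle, since the real work has already been done inside Theorem~\ref{intersec-ls-acc}.

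With that in hand, I would argue as follows for each fixed $\lambda > \theta$. Since $\lambda > \theta > \LS (\K)$ and $\SCH_{\mu, \lambda}$ holds, and since $\K$ has arbitrarily large models (so in particular a model of cardinality at least $\lambda^+$), Theorem~\ref{intersec-ls-acc} yields a model of $\K$ of internal size exactly $\lambda$. As $\lambda > \theta$ was arbitrary, $\K$ has an object of every internal size $\lambda \ge \theta^+$, which is precisely LS-accessibility with witnessing cardinal $\theta^+$.

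Finally, for the ``in particular'' clause I would invoke Remark~\ref{sch-rmk}: if $\kappa$ is $\omega_1$-strongly compact then $\SCH_{\mu, \ge (2^{<\mu} + \kappa)}$ holds for every regular $\mu$. Taking $\theta = 2^{<\mu} + \kappa$ therefore satisfies the hypothesis $\SCH_{\mu, \ge \theta}$ of the first assertion, and so any $\mu$-AEC admitting intersections with arbitrarily large models is LS-accessible under this large-cardinal assumption.
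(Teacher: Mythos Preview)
Your proof is correct and follows exactly the approach of the paper, which simply says the result is immediate from Theorem~\ref{intersec-ls-acc} and Remark~\ref{sch-rmk}. You have merely made explicit the routine bookkeeping (enlarging $\theta$ past $\LS(\K)$ and checking that $\SCH_{\mu,\ge\theta}$ yields $\SCH_{\mu,\lambda}$ for each $\lambda>\theta$) that the paper leaves implicit.
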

\begin{proof}
  Immediate from Theorem \ref{intersec-ls-acc} and Remark \ref{sch-rmk}.
\end{proof}

\section{Examples}\label{examples-sec}

The common thread linking the results so far is the analysis, often under GCH or a suitable instance of SCH, of the relationship between internal size and cardinality in $\mu$-AECs, and the ways in which certain properties---the existence of models in each size, say---change when we toggle between these notions.  In this section, we study a series of examples that nicely capture this phenomenon, focusing in particular, on the ways in which the categoricity spectrum changes when we change our notion of size. 

Before we begin, though, two small cautionary examples illustrating that the behavior of small internal sizes can be quite wild:

\begin{example}\label{metric-example}
  In the category of complete metric spaces with isometries, the one point metric space $\{0\}$ is $\aleph_1$-presentable but \emph{not} $\aleph_0$-presentable (compare with Lemma \ref{pres-1}). Indeed, the set of reals $A = {0} \cup \{\frac{1}{n} \mid 1 \le n < \omega\}$ is the directed colimit of the spaces $A_m = \{\frac{1}{n} \mid 1 \le n \le  m\}$ for $m < \omega$ but the inclusion of $\{0\}$ into $A$ does not factor through any of the $A_m$'s. A similar argument shows that the empty space is the only $\aleph_0$-presentable complete metric space.
\end{example}

\begin{example}\label{wo-example}
  Let $\lambda$ be an infinite cardinal and let $\K$ be the ($\aleph_0$-)AEC of all well-orderings of order type at most $\lambda^+$, ordered by initial segment. Note that $\LS (\K) = \lambda$ and $\K$ admits intersections. Let $\alpha \le \lambda^+$. Then $|(\alpha, \in)|_{\K} = \cf{\alpha} + \aleph_0$. To see this, note that for $A \subseteq \alpha$, $\cl^{(\alpha, \in)} (A) = \alpha$ if and only if $A$ is cofinal in $(\alpha, \in)$, and use Theorem \ref{internal-cl}. In particular, $\K$ has no objects of singular internal size and when $\lambda$ is uncountable there are objects $M, N \in \K$ such that $M \lea N$ but $|M|_{\K} > |N|_{\K}$ (take $M := (\omega_1, \in)$ and $N := (\omega_1 + \omega, \in)$).
\end{example}

Looking at the categoricity spectrum, we now give a negative answer to Question \ref{categ-q-1}; that is, we give an example of the failure of eventual categoricity for $\mu$-AECs when categoricity is interpreted in terms of {\it cardinality}:

\begin{example}\label{hilbert-example}
  We consider a modification of an example of Makkai and Par\'e (\cite{makkai-pare} 3.4.2): rather than consider $\Hilb$, the category of (complex) Hilbert spaces and contractions, we consider the subcategory $\Hilb_0$, consisting of Hilbert spaces and their isometries.  We note, as an aside, that the isometries are precisely the regular monomorphisms in $\Hilb$.  The analysis of \cite{makkai-pare} holds even in this case---the only essential thing to note is that the norm on the colimit defined in their displayed equation (7) ensures that the colimit coprojections are themselves isometries.  We may conclude, then, that $\Hilb_0$ is $\aleph_1$-accessible and given that its morphisms are monomorphisms, it is equivalent to an $\aleph_1$-AEC (\cite{mu-aec-jpaa} 4.10). In fact, it is easy to check directly that it is an $\aleph_1$-AEC. Moreover, one can show (see \cite[3.4.4]{makkai-pare}) that in $\Hilb_0$, $|M|_{\Hilb_0}$ corresponds to the size of an orthonormal basis for $M$. Therefore $\Hilb_0$ is categorical in every internal size. 

However, by \cite[2.7]{BDHMP}, any infinite-dimensional Banach (and therefore Hilbert) space has cardinality $\lambda^{\aleph_0}$, for some infinite cardinal $\lambda$. Thus if $\gamma$ is such that $\lambda^{\aleph_0} = \aleph_{\gamma}$, $\alpha \le \gamma$ is least such that $\aleph_\alpha^{\aleph_0} = \aleph_\gamma$, and $\beta$ is such that $\gamma = \alpha + \beta$, then there are $(|\beta| + 1)$-many Hilbert spaces of cardinality $\aleph_{\gamma}$. In particular if $\alpha < \gamma$ then $\Hilb_0$ is not categorical in $\aleph_{\gamma}$ (but note that $\aleph_{\gamma}^{\aleph_0} = \aleph_{\gamma}$). Thus the $\aleph_1$-AEC of Hilbert spaces and isometries is indeed a counterexample to eventual categoricity in power, and a negative answer to Question \ref{categ-q-1}.

Assuming GCH, the situation is particularly clear: there are no models in cardinality $\lambda$ with $\cf{\lambda}=\aleph_0$, which is to be expected in an $\aleph_1$-AEC: we explicitly exclude such cardinals in the conjecture as formulated in Question \ref{categ-q-1}.  If $\lambda$ is not the successor of a cardinal of countable cofinality, there is a unique model of size $\lambda$---this is meaningful categoricity.  If, on the other hand, we consider $\lambda^+$ where $\cf{\lambda}=\aleph_0$, there are exactly two nonisomorphic models, one generated by a basis of size $\lambda$ and one by a basis of size $\lambda^+$.
\end{example}

\begin{example}
	In a general mAEC, one would expect to see similar failures of eventual categoricity in power: the requirement that the spaces underlying structures be complete introduces the same problem of cardinal exponentiation.  Eventual categoricity in terms of internal size---density character---is very much an open question, and there are few, if any, explicit computations of specific categoricity spectra.  We note, though, that eventual categoricity in density character has been shown to hold in the more restrictive classes of metric structures treated in \cite{shus837}.   These classes, which consist, roughly speaking, of complete approximately elementary (i.e. elementary-up-to-$\epsilon$)  submodels of a fixed compact, homogeneous metric structure $\sea$, are closely connected to a number of notions arising from continuous and homogeneous model theory. Specifically, they correspond to not-necessarily-Hausdorff \emph{metric compact abstract classes} (or \emph{metric cats}, \cite{bymcats}), and, by extension, are connected to the model theory of the logic of \emph{positive bounded formulas} (\cite{hensonio}) and \emph{continuous first order logic} (\cite{byuscontlog}, drawing on \cite{changkeis-cont}).
\end{example}

\subsection{Shelah's example}\label{shelah-example}

In the rest of this section, we study an example mentioned by Shelah in the introduction of \cite{sh1019}, which exhibits striking behavior whether we count isomorphism classes of models by cardinality or by internal size. The idea is to code the class of sufficiently-closed models of constructible set theory to obtain a $\mu$-AEC $\K^{\mu}$ that is categorical exactly in the cardinals of cofinality below $\mu$. We show, again, that the picture becomes quite different once one looks at internal sizes. In particular, while the categoricity spectrum in cardinalities alternates, $\K^{\mu}$ will not be categorical in \emph{any} internal size and in fact will have many models in each internal size.

We work in the language of set theory (i.e.\ it has equality and a binary relation $\in$). We will use the following definition of the constructible universe: let $L_0 = \emptyset$, $L_{\alpha + 1} = \Def (L_\alpha)$ (where $\Def (X)$ is the set of $Y \subseteq X$ such that there is a formula $\phi$ in the language of set theory and parameters $\ba \in \fct{<\omega}{X}$ such that $Y = \{x \in X \mid (X, \in) \models \phi[x, \ba]$), and $L_\beta = \bigcup_{\alpha < \beta} L_\alpha$ for $\beta$ limit. Finally let $L := \bigcup_{\alpha \in \OR} L_\alpha$. We work in Kripke-Platek (KP) set theory (see e.g.\ \cite[I.11]{devlinbook}: note that it includes the axiom of infinity). We will use the following facts about the theory of constructibility\footnote{The reader should be aware that there are some mistakes in Devlin's book \cite{devlinbook-review-stanley}. However, the results that we use in this paper are correct.}:

\begin{fact}[II.1.1.(vii) in \cite{devlinbook}]\label{l-fact-0}
  For any infinite ordinal $\alpha$, $|L_\alpha| = |\alpha|$.
\end{fact}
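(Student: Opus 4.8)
The plan is to prove this by transfinite induction on the infinite ordinal $\alpha$, with the cardinality bookkeeping resting on a single combinatorial lemma about $\Def$. First I would check that for every infinite set $X$ one has $|\Def (X)| = |X|$. The upper bound is a counting argument: there are only countably many formulas $\phi$ in the finitary language of set theory, and a definable subset of $X$ is determined by such a $\phi$ together with a finite parameter tuple $\ba \in \fct{<\omega}{X}$, so $|\Def (X)| \le \aleph_0 \cdot |X|^{<\aleph_0} = |X|$ since $X$ is infinite. The lower bound is immediate from the injection $a \mapsto \{x \in X \mid x = a\}$ of $X$ into $\Def (X)$. This lemma is what propagates the cardinality through successor stages.

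With the lemma in hand I would run the induction, using for the lower bound the structural fact $\OR \cap L_\alpha = \alpha$ (so that $\alpha \subseteq L_\alpha$ and hence $|L_\alpha| \ge |\alpha|$). For the base case $\alpha = \omega$, each $L_n$ with $n < \omega$ is finite, being a finite tower of definable-power-set operations starting from $\emptyset$, so $L_\omega = \bigcup_{n < \omega} L_n$ is countable and $|L_\omega| = \aleph_0 = |\omega|$. For the successor step the lemma gives $|L_{\alpha+1}| = |\Def (L_\alpha)| = |L_\alpha| = |\alpha| = |\alpha + 1|$, the last equality because $\alpha$ is infinite. For a limit $\beta$ the upper bound is $|L_\beta| = \left|\bigcup_{\alpha < \beta} L_\alpha\right| \le \sum_{\alpha < \beta} |L_\alpha| \le |\beta| \cdot |\beta| = |\beta|$, bounding each of the $|\beta|$-many terms $|L_\alpha| = |\alpha|$ by $|\beta|$; note that I deliberately avoid needing $\sup_{\alpha < \beta} |\alpha| = |\beta|$, which fails at e.g.\ $\beta = \omega_1$. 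Together with $|L_\beta| \ge |\beta|$ this closes the limit case.

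The genuine content, and the step I expect to be the main obstacle, is the bundle of basic structural facts about the hierarchy that underwrites the lower bound, established by a simultaneous induction: that each $L_\alpha$ is transitive, that $L_\alpha \subseteq V_\alpha$, and that $\OR \cap L_\alpha = \alpha$. The inclusion $\alpha \subseteq L_\alpha$ comes from observing that, granting $\OR \cap L_\gamma = \gamma$, the set $\gamma = \{x \in L_\gamma \mid x \text{ is an ordinal}\}$ is definable over $L_\gamma$, whence $\gamma \in \Def (L_\gamma) = L_{\gamma+1} \subseteq L_\alpha$ for $\gamma < \alpha$. The reverse inclusion $\OR \cap L_\alpha \subseteq \alpha$ is the delicate part: it follows from $L_\alpha \subseteq V_\alpha$ via a rank argument, since elements of $L_{\alpha+1} = \Def (L_\alpha)$ are subsets of $L_\alpha$ and an ordinal $\xi \in V_\alpha$ satisfies $\operatorname{rank}(\xi) = \xi < \alpha$. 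Combining the two inclusions yields $\OR \cap L_\alpha = \alpha$ and completes the induction. Finally I would flag that the hypothesis that $\alpha$ be infinite is essential, as already $|L_3| = 4 \ne 3$, so no analogous equality can hold at the finite levels.
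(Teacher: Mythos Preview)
Your argument is correct and is the standard transfinite induction proof of this fact. The paper itself does not prove this statement at all: it is recorded as a Fact with a bare citation to Devlin's book (II.1.1(vii)), so there is nothing in the paper to compare against beyond noting that your write-up is precisely the kind of argument one finds at the cited location.
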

\begin{fact}[II.2.9 in \cite{devlinbook}]\label{l-fact-1}
  If $M$ is a well-founded model of KP + V = L, then there exists a unique ordinal $\alpha$ and a unique isomorphism $\pi: M \cong (L_\alpha, \in)$ 
\end{fact}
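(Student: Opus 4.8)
The plan is to reduce the statement to the Mostowski collapsing lemma together with the absoluteness of the constructible hierarchy for transitive models of KP. First I would observe that, since KP includes extensionality and $M$ is assumed well-founded, the membership relation of $M$ is a well-founded extensional relation. By the Mostowski collapsing lemma there is then a \emph{unique} transitive set $N$ and a \emph{unique} isomorphism $\pi \colon M \cong (N, \in)$. This already disposes of the uniqueness of $\pi$ once we know $N = L_\alpha$: any isomorphism of $M$ onto a transitive structure $(L_\alpha, \in)$ is necessarily a transitive collapse of $M$, hence coincides with the Mostowski collapse. So the whole problem reduces to showing that a transitive set $N$ satisfying KP $+$ $V = L$ is of the form $L_\alpha$, and that $\alpha$ is determined by $N$.

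For the identification, I would set $\alpha := \OR \cap N$; since $N$ is transitive, its ordinals form a transitive set of ordinals, hence $\alpha$ is itself an ordinal (the ordinal height of $N$). The key input is that the map $\beta \mapsto L_\beta$ is absolute for transitive models of KP, i.e.\ $L_\beta^N = L_\beta$ for every $\beta \in N$. Granting this, I would argue both inclusions. Since $N \models V = L$, every $x \in N$ lies in $L_\beta^N = L_\beta$ for some $\beta \in N$, so $N \subseteq \bigcup_{\beta < \alpha} L_\beta = L_\alpha$. Conversely, for each $\beta < \alpha$ we have $\beta \in N$, and as $N \models V = L$ the set $L_\beta^N = L_\beta$ is an element of $N$, whence $L_\beta \subseteq N$ by transitivity, giving $L_\alpha = \bigcup_{\beta < \alpha} L_\beta \subseteq N$. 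Thus $N = L_\alpha$. For the uniqueness of $\alpha$ I would use the standard fact that $\OR \cap L_\alpha = \alpha$ for every ordinal $\alpha$, so that $\alpha$ is recovered from $N = L_\alpha$ as its ordinal height; two presentations $L_\alpha = L_{\alpha'}$ then force $\alpha = \alpha'$.

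The hard part will be the absoluteness claim $L_\beta^N = L_\beta$. This is where the strength of KP is genuinely used: one must check that the defining recursion for the $L$-hierarchy---in particular the operation $\Def$, which depends on the satisfaction relation for $\Delta_0$ formulas---is $\Delta_1$, and that KP proves enough $\Delta_0$-separation and $\Sigma_1$-collection to carry out this recursion and to verify that it is computed correctly inside $N$. I would not reprove this from scratch, as it is precisely the content of the admissible-sets development underlying the cited result in Devlin's book; I would instead invoke it and note that everything else above is routine once it is in hand.
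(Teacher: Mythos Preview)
The paper does not supply its own proof of this statement: it is recorded as a Fact with a citation to Devlin's book, and the paper simply quotes it. Your sketch is correct and is precisely the standard argument underlying the cited result. Mostowski collapse reduces the problem to showing that a transitive set model $N$ of $\mathrm{KP} + V = L$ equals $L_\alpha$ for $\alpha = N \cap \OR$, and the two inclusions follow exactly as you describe once one knows that the map $\beta \mapsto L_\beta$ is absolute for admissible sets. You are also right that this absoluteness is where the real work lies and that it is the content of the admissible-sets machinery developed in Devlin (and in Barwise). One small point you might make explicit: since $N$ satisfies KP (in particular infinity, pairing and union), $\alpha = N \cap \OR$ is a limit ordinal, so writing $L_\alpha = \bigcup_{\beta < \alpha} L_\beta$ is legitimate; otherwise the argument is complete as written.
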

\begin{fact}[I.11.2, II.7.1 in \cite{devlinbook}]\label{l-fact-2}
  For any infinite cardinal $\lambda$, $(L_\lambda, \in)$ is a model of KP (and of $V = L$).
\end{fact}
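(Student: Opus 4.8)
The plan is to verify, one axiom at a time, that the transitive $\in$-structure $(L_\lambda, \in)$ models KP, and then separately that it models $V = L$. Since $\lambda$ is an infinite cardinal it is in particular a limit ordinal, so $L_\lambda = \bigcup_{\alpha < \lambda} L_\alpha$ and each $L_\alpha \in L_{\alpha+1} \subseteq L_\lambda$. Extensionality and Foundation are immediate because $L_\lambda$ is transitive and $\in$ is the genuine (well-founded) membership relation. For Pairing, Union, and $\Delta_0$-Separation the common mechanism is the same: if the relevant parameters lie in some $L_\alpha$ with $\alpha < \lambda$, then by transitivity of $L_\alpha$ together with $\Delta_0$-absoluteness between the transitive sets $L_\alpha$ and $L_\lambda$, the set asserted to exist is definable over $L_\alpha$, hence lies in $\Def(L_\alpha) = L_{\alpha+1} \subseteq L_\lambda$ (and $\alpha + 1 < \lambda$ precisely because $\lambda$ is a limit ordinal). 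Infinity holds since $\omega \in L_{\omega+1} \subseteq L_\lambda$.

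The heart of the matter is $\Delta_0$-Collection, which I expect to be the main obstacle and which is the only place where being a \emph{cardinal}, rather than merely a limit ordinal, is used. Suppose $a \in L_\lambda$, let $\phi$ be $\Delta_0$ with parameters $p \in L_\lambda$, and suppose $(L_\lambda, \in) \models \forall x \in a\, \exists y\, \phi(x, y, p)$. Choose $\beta < \lambda$ with $a, p \in L_\beta$; by Fact \ref{l-fact-0} and since $\lambda$ is a cardinal, $|L_\beta| = |\beta| < \lambda$. Using the $\Sigma_1$-definable Skolem functions available in the constructible hierarchy, form the Skolem hull $X \prec_{\Sigma_1} (L_\lambda, \in)$ of $L_\beta \cup \{a, p\}$, so that $|X| = |L_\beta| + \aleph_0 < \lambda$. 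By Gödel's Condensation Lemma the transitive collapse of $X$ is of the form $L_\gamma$ with $|\gamma| = |L_\gamma| = |X| < \lambda$; \emph{here is the decisive point}: because $\lambda$ is a cardinal, $|\gamma| < \lambda$ forces $\gamma < \lambda$, so $L_\gamma \in L_\lambda$. Since $L_\beta$ is transitive and contained in $X$, the collapse fixes $a$, $p$, and every $x \in a$ pointwise, and since $X$ is closed under the Skolem functions witnessing the true $\Sigma_1$ statements $\exists y\, \phi(x,y,p)$, each witness lies in $X$ and collapses — using $\Delta_0$-absoluteness — to a witness in $L_\gamma$. Thus $b := L_\gamma \in L_\lambda$ satisfies $\forall x \in a\, \exists y \in b\, \phi(x,y,p)$, as required.

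Finally, $V = L$ holds in $L_\lambda$ for any limit ordinal $\lambda > \omega$: every element of $L_\lambda$ appears at some stage $L_\alpha$ with $\alpha < \lambda$, and the operation $\alpha \mapsto L_\alpha$ (built from $\Def$) is absolute for such $L_\lambda$, so $(L_\lambda,\in)$ correctly verifies $\forall x\, \exists \alpha\, (x \in L_\alpha)$. I would emphasize why the condensation route for Collection is the right one: for \emph{regular} $\lambda$ a naive argument suffices (take for each $x \in a$ the least rank of a witness and note the supremum of fewer than $\lambda$ ordinals below $\lambda$ stays below $\lambda$), but this breaks for singular $\lambda$, since a subset of $\lambda$ of size $<\lambda$ may be cofinal when $\cf{\lambda} < \lambda$. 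The Skolem-hull-plus-condensation argument avoids taking any such supremum and uses only the defining property of a cardinal, so it handles singular and regular $\lambda$ uniformly; this is exactly the subtle point that the proof must get right.
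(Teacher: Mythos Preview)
The paper does not give a proof of this fact; it is stated as a citation to Devlin's book. Your proposal therefore supplies what the paper omits, and the argument is correct and follows the standard route (essentially Devlin's). You have also correctly identified the one genuinely nontrivial point: the naive ``take the supremum of witness-ranks'' verification of $\Delta_0$-Collection fails when $\lambda$ is singular, and the Skolem-hull-plus-condensation argument is precisely the uniform fix, using only that $|\gamma| < \lambda$ implies $\gamma < \lambda$.

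Two minor technical remarks. First, you write $X \prec_{\Sigma_1} (L_\lambda,\in)$, but for the clean G\"odel condensation lemma (the transitive collapse of $X$ is some $L_\gamma$) it is simplest to take $X$ \emph{fully} elementary in $L_\lambda$, using the definable well-ordering of $L_\lambda$ to build Skolem functions; this costs nothing in cardinality and avoids appealing to the finer $\Sigma_1$-condensation machinery you do not actually need. Second, your verification of Infinity uses $\omega \in L_{\omega+1} \subseteq L_\lambda$, which requires $\lambda > \omega$; since the paper's KP explicitly includes Infinity, the case $\lambda = \omega$ is a defect in the \emph{statement} as written (it should say ``uncountable cardinal'' or ``infinite cardinal $\lambda > \omega$''), not in your argument, and is harmless for the paper's applications.
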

\begin{fact}[II.5.5 in \cite{devlinbook}]\label{l-fact-3}
  For any ordinal $\alpha$, $\mathcal{P} (L_\alpha) \cap L \subseteq L_{|\alpha|^+ + \aleph_0}$. In particular, GCH holds in $L$.
\end{fact}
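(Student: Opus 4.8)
The plan is to prove the inclusion $\mathcal{P}(L_\alpha) \cap L \subseteq L_{|\alpha|^+ + \aleph_0}$ by a condensation argument and then read off GCH as a cardinality count. Write $\kappa := |\alpha|^+ + \aleph_0$. The finite-$\alpha$ case is trivial (every finite subset of the finite set $L_\alpha$ already appears in $L_\omega \subseteq L_\kappa$), so I would fix an infinite $\alpha$ together with an arbitrary $A \in \mathcal{P}(L_\alpha) \cap L$, and aim to produce $\beta < \kappa$ with $A \in L_\beta$. Since $A \in L$, I would first choose an infinite cardinal $\lambda > \alpha$ with $A \in L_\lambda$. By Fact \ref{l-fact-2}, $(L_\lambda, \in)$ is a model of KP + V = L, and because $V = L$ holds inside it, it carries definable Skolem functions (via the canonical well-ordering $<_L$ of the constructible hierarchy), so Skolem hulls can be taken without appeal to choice.

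Next I would form the Skolem hull $X$ of $L_\alpha \cup \{A\}$ inside $(L_\lambda, \in)$. Using the definable Skolem functions, $X$ is an elementary substructure of $(L_\lambda, \in)$ with $L_\alpha \cup \{A\} \subseteq X$ and $|X| = |L_\alpha| + \aleph_0 = |\alpha| + \aleph_0 = |\alpha|$, the middle equality by Fact \ref{l-fact-0}. Being elementarily equivalent to $(L_\lambda, \in)$, the structure $X$ is again a model of KP + V = L. I would then pass to its Mostowski collapse $\pi \colon X \to \bar{X}$; the image $\bar{X}$ is transitive, hence well-founded under $\in$, and isomorphic to $X$, so it too models KP + V = L. By Fact \ref{l-fact-1}, $\bar{X} = (L_\beta, \in)$ for a unique ordinal $\beta$.

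The two points that close the argument are that the collapse fixes $A$ and that $\beta$ is small. For the first: $L_\alpha$ is a transitive set contained in $X$, so $\pi$ is the identity on $L_\alpha$; since $A \subseteq L_\alpha$ this gives $\pi(A) = \{\pi(x) : x \in A\} = A$, whence $A \in \bar{X} = L_\beta$. For the second: $|\beta| = |L_\beta| = |\bar{X}| = |X| = |\alpha| < |\alpha|^+ \le \kappa$ (again by Fact \ref{l-fact-0}), so $\beta < \kappa$ because $\kappa$ is a cardinal. Hence $A \in L_\beta \subseteq L_\kappa$, giving the desired inclusion. The genuinely hard content is condensation itself, i.e.\ the assertion of Fact \ref{l-fact-1} that a transitive model of KP + V = L is literally some $L_\beta$; this rests on the $\Sigma$-definability and absoluteness of the map $\xi \mapsto L_\xi$ across such models. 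Since we are permitted to cite that fact, the remaining steps are bookkeeping with the collapse and the cardinal arithmetic.

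Finally I would deduce GCH in $L$. Fix an infinite cardinal $\nu$. Since $\nu \subseteq L_\nu$, every $A \subseteq \nu$ with $A \in L$ lies in $\mathcal{P}(L_\nu) \cap L$, so the inclusion just proved yields $\mathcal{P}(\nu) \cap L \subseteq L_{|\nu|^+ + \aleph_0} = L_{\nu^+}$. By Fact \ref{l-fact-0}, $|L_{\nu^+}| = \nu^+$, so $\left|\mathcal{P}(\nu) \cap L\right| \le \nu^+$; that is, $2^\nu \le \nu^+$ holds in $L$. Combined with Cantor's theorem, $2^\nu \ge \nu^+$, this gives $2^\nu = \nu^+$ for all infinite $\nu$, i.e.\ GCH holds in $L$.
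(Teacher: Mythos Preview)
The paper does not prove this statement at all: it is stated as a \emph{Fact} with a bare citation to Devlin's book (II.5.5), and no argument is given. So there is no paper proof to compare against.

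Your argument is the standard condensation proof and is essentially correct. The inclusion $\mathcal{P}(L_\alpha)\cap L\subseteq L_{|\alpha|^+ + \aleph_0}$ is established exactly as one expects: hull, collapse, invoke Fact~\ref{l-fact-1}, and observe that the collapse fixes $A$ because $L_\alpha$ is transitive and contained in the hull. One small point worth tightening in the GCH paragraph: as written you compute $|\mathcal{P}(\nu)\cap L|$ and $\nu^+$ in $V$ and then conclude ``$2^\nu\le\nu^+$ holds in $L$''. This conflates $V$-cardinalities with $L$-cardinalities; in general $(\nu^+)^L$ may be strictly smaller than $(\nu^+)^V$. The clean fix is simply to note that the entire condensation argument relativizes to $L$ (all the ingredients---Skolem hulls via the definable well-order, Mostowski collapse, and the condensation Fact~\ref{l-fact-1}---are available inside $L$), so one may assume $V=L$ and then your cardinality count goes through verbatim. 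With that adjustment the proof is complete.
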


\begin{defin}
Let $K$ be the class of well-founded models of KP + V = L. Let $\K := (K, \preceq)$ (where $\preceq$ denotes the usual first-order elementary substructure). 
\end{defin}

Note that $K$ is axiomatizable by an $\Ll_{\aleph_1, \aleph_1}$-sentence (we only use an infinite quantifier to say that the universe is well-founded). Moreover, denoting by $I (\K, \lambda)$ the number of nonisomorphic models in $\K_\lambda$ we have:

\begin{lem}\label{aleph1-aec-lem}
  $\K$ is an $\aleph_1$-AEC. Moreover, it has no finite models and $I (\K, \lambda) = \lambda^+$ for every infinite cardinal $\lambda$.
\end{lem}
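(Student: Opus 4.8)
The plan is to verify the three $\aleph_1$-AEC axioms directly, and then read off the finiteness and counting claims from Facts \ref{l-fact-0}, \ref{l-fact-1}, and \ref{l-fact-2}. Throughout I would use that $\preceq$ is ordinary first-order elementary substructure in the finite language $\{\in\}$, that both KP and the sentence ``$V=L$'' are first-order, and that well-foundedness and extensionality pass to substructures.

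\emph{The axioms.} Coherence is the standard Tarski--Vaught fact that $M_0 \subseteq M_1 \preceq M_2$ together with $M_0 \preceq M_2$ forces $M_0 \preceq M_1$, and holds for any class ordered by $\preceq$. For the LST axiom, given $M \in \K$ and $A \subseteq UM$, the downward Löwenheim--Skolem theorem supplies $M_0 \preceq M$ with $A \subseteq UM_0$ and $|UM_0| \le |A| + \aleph_0$; as a substructure of a well-founded model $M_0$ is well-founded, and by elementarity $M_0 \models$ KP${} + {}$``$V=L$'', so $M_0 \in \K$. Taking $\LS(\K) = 2^{\aleph_0}$ (which satisfies $\lambda = \lambda^{<\aleph_1}$), the bound $|A| + \aleph_0 \le |A|^{<\aleph_1} + \LS(\K)$ is immediate. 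The \textbf{chain axioms} are where the index $\aleph_1$ rather than $\aleph_0$ is essential, and this is the conceptual heart of the argument. For an $\aleph_1$-directed system $\seq{M_i : i \in I}$ with union $M$, elementarity of directed unions along elementary embeddings gives $M_i \preceq M$, $M \equiv M_i \models$ KP${}+{}$``$V=L$'', and the coherence condition that $M_i \preceq N$ for all $i$ implies $M \preceq N$. The only nontrivial point is that $M$ is \emph{well-founded}: any putative infinite descending $\in$-chain is a countable, hence $(<\aleph_1)$-sized, subset of $\bigcup_i UM_i$, so by $\aleph_1$-directedness it lies inside a single $M_i$, contradicting well-foundedness of $M_i$. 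Hence $M \in \K$, and $\K$ is an $\aleph_1$-AEC. (Note this step genuinely fails for ordinary directed unions, where ill-foundedness can appear only in the limit.)

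\emph{No finite models and the upper bound.} Since KP includes the axiom of infinity, every model in $K$ is infinite. For the count, I would combine Facts \ref{l-fact-0} and \ref{l-fact-1}: up to isomorphism the objects of $\K$ of cardinality $\lambda$ are exactly the $(L_\alpha, \in) \models$ KP${}+{}$``$V=L$'' with $\alpha \in [\lambda, \lambda^+)$, since $|L_\alpha| = |\alpha|$, and distinct such $\alpha$ yield non-isomorphic models by uniqueness of the transitive collapse. This at once gives $I(\K,\lambda) \le |[\lambda, \lambda^+)| = \lambda^+$.

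\emph{The lower bound.} For the reverse inequality I would manufacture $\lambda^+$ admissible ordinals in $[\lambda, \lambda^+)$ by collapsing elementary substructures of the single large model $(L_{\lambda^+}, \in)$, which lies in $\K$ by Fact \ref{l-fact-2}. Given $\beta \in [\lambda, \lambda^+)$, apply downward Löwenheim--Skolem to $(L_{\lambda^+}, \in)$ with $A = \beta+1$ (so $|A| = \lambda$), obtaining $M_\beta \preceq (L_{\lambda^+}, \in)$ of cardinality $\lambda$ containing every ordinal $\le \beta$. Its transitive collapse is some $(L_{\gamma_\beta}, \in) \in \K$ by Fact \ref{l-fact-1}, with $|\gamma_\beta| = \lambda$ (Fact \ref{l-fact-0}) and $\gamma_\beta > \beta$, the point being that the collapse fixes each ordinal $\le \beta$ since all its members already lie in $M_\beta$. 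Then $\{\gamma_\beta : \beta \in [\lambda, \lambda^+)\}$ is unbounded in the regular cardinal $\lambda^+$, hence of size $\lambda^+$, and each $\gamma_\beta$ names a distinct model of cardinality $\lambda$, so $I(\K, \lambda) \ge \lambda^+$. I expect the main obstacle to be precisely this last step: arranging that the collapses are cofinal in $\lambda^+$, which is exactly what forcing an initial segment of ordinals into $A$ guarantees, so that collapsing cannot lower the ordinal height.
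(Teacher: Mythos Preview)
Your proof is correct and follows essentially the same line as the paper's: coherence is trivial for $\preceq$, the LST axiom comes from downward L\"owenheim--Skolem plus preservation of well-foundedness to substructures, and the chain axiom reduces to the key observation that a countable witness to ill-foundedness of an $\aleph_1$-directed union must already sit in a single $M_i$. The only genuine difference is in the lower bound for $I(\K,\lambda)$: the paper observes directly that the set of $\alpha<\lambda^+$ with $(L_\alpha,\in)\preceq(L_{\lambda^+},\in)$ contains a club (via a continuous elementary chain), whereas you instead collapse elementary substructures containing initial segments to get cofinally many admissible ordinals below $\lambda^+$; both are standard and equally short, and your version has the slight advantage of making the upper bound (via Facts~\ref{l-fact-0} and~\ref{l-fact-1}) explicit, which the paper leaves implicit.
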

\begin{proof}[Proof of Lemma \ref{aleph1-aec-lem}]
  We first check that $\K$ satisfies the axioms from the definition of an $\aleph_1$-AEC. The only non-trivial ones are:
  \begin{itemize}
  \item Löwenheim-Skolem-Tarski axiom: by the $\Ll_{\omega, \omega}$ Löwenheim-Skolem-Tarski axiom, using that a subset of a well-founded model is still well-founded.
  \item Tarski-Vaught axioms: it is enough to check that for any $\aleph_1$-directed system $I$ and any $\seq{M_i : i \in I}$ increasing continuous in $\K$, $M := \bigcup_{i \in I} M_i$ is in $\K$. Note that $(M, \in^M)$ is well-founded. If not, there is a countable set $X \subseteq |M|$ witnessing it, and this countable set must be contained in $M_i$ for some $i \in I$, hence $M_i$ is ill-founded which is impossible by definition of $\K$. Now $M$ is a model of KP and $V = L$ by elementarity and the result follows.
  \end{itemize}

  For the moreover part, note first that the definition of KP that we use includes the axiom of infinity, so $\K$ has no finite models. Further, for any infinite cardinal $\lambda$, $(L_{\lambda^+}, \in)$ is in $\K$ by Fact \ref{l-fact-2}. By an easy argument using the $\Ll_{\omega, \omega}$ Löwenheim-Skolem-Tarski theorem, there is a club $C$ of ordinals $\alpha < \lambda^+$ such that $(L_{\alpha}, \in) \preceq (L_{\lambda^+}, \in)$. Since (for example by Fact \ref{l-fact-1}) for $\alpha \neq \beta$ both in $C$, $(L_\alpha, \in) \not \cong (L_\beta, \in)$, we obtain that $I (\K, \lambda) = |C| = \lambda^+$.
\end{proof}

We note that in $\K$, internal size and cardinality coincide. Unfortunately, this will fail when we pass to $\K^{\mu}$, which we will define presently.

\begin{thm}\label{k-internal-card}
  For any $M \in \K$, $|U M| = |M|_{\K}$.
\end{thm}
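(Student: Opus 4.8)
The plan is to prove the two inequalities $|M|_\K \le |UM|$ and $|M|_\K \ge |UM|$ separately, the first being immediate and the second resting on a single Löwenheim–Skolem observation peculiar to $\K$. For the upper bound, the ``in particular'' clause of Lemma \ref{pres-1} (taking $A = UM$) shows that $M$ is $(|UM|^+)$-presentable---legitimate since $|UM|^+ \ge \aleph_1 = \mu$---so $r_\K(M) \le |UM|^+$ and hence $|M|_\K \le |UM|$. This holds for every $M$ with no further hypotheses.

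The crux is the lower bound, and here the key point---the feature sharply distinguishing $\K$ from a general $\aleph_1$-AEC---is that $\K$ enjoys downward Löwenheim–Skolem with the \emph{optimal} bound $|A| + \aleph_0$, rather than the a priori $\mu$-AEC bound $|A|^{<\mu} = |A|^{\aleph_0}$. Concretely, I would first establish the claim that every $M \in \K$ is $\lambda$-closed (Definition \ref{closed-def}) for every cardinal $\lambda > \aleph_0$: given $A \subseteq UM$ with $|A| < \lambda$, the first-order downward Löwenheim–Skolem theorem produces $N \preceq M$ with $A \subseteq UN$ and $|UN| \le |A| + \aleph_0 < \lambda$; as a substructure of the well-founded model $M$ it is again well-founded, and by elementarity $N \models \mathrm{KP} + V = L$, so $N \in \K$ and $N \lea M$. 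This is exactly the phenomenon that breaks down once one imposes the countable-closure condition defining $\K^\mu$, and it is what prevents the internal size from ever dropping below the cardinality here.

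Granting the claim, I would conclude by cases on $\lambda := |UM|$ (infinite, as $\K$ has no finite models). If $\lambda \ge \aleph_1$ is regular, then $M$ is $\lambda$-closed and $|UM| = \lambda$ is not $<\lambda$, so the contrapositive of Lemma \ref{pres-2} (applicable as $\lambda \ge \mu$) gives that $M$ is not $\lambda$-presentable; thus $r_\K(M) > \lambda$, and with the upper bound $r_\K(M) = \lambda^+$, so $|M|_\K = \lambda$. If $\lambda$ is singular (hence $>\aleph_1$), then for each regular $\lambda_0$ with $\aleph_1 \le \lambda_0 < \lambda$ the same use of Lemma \ref{pres-2} yields $r_\K(M) > \lambda_0$, whence $|M|_\K \ge \lambda_0$; as such $\lambda_0$ are unbounded in $\lambda$ this forces $|M|_\K \ge \lambda$, and with the upper bound $|M|_\K = \lambda$. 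Finally, if $\lambda = \aleph_0$, Lemma \ref{pres-1} gives $r_\K(M) \le \aleph_1$, so $r_\K(M)$ is an infinite regular cardinal in $\{\aleph_0, \aleph_1\}$; since $\aleph_1 = \aleph_0^+$ and $\aleph_0$ is a limit cardinal, either value yields $|M|_\K = \aleph_0$ by the definition of internal size. In every case $|M|_\K = |UM|$.

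I expect the only real obstacle to be isolating and justifying the closure claim of the second paragraph; everything else is a routine application of the presentability lemmas of Section \ref{mu-aecs-sec}. It is worth stressing that, unlike Theorem \ref{internal-size-mu-aec}, this argument needs no instance of GCH or SCH: the sharp Löwenheim–Skolem bound makes every model $\lambda$-closed for all uncountable $\lambda$, including the ``bad'' successors $\lambda = \lambda_0^+$ with $\cf{\lambda_0} = \aleph_0$ at which the general theory would otherwise permit a drop in internal size.
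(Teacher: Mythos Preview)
Your proposal is correct and follows essentially the same approach as the paper. Both arguments hinge on the identical observation that the first-order downward L\"owenheim--Skolem theorem yields the sharp bound $|A| + \aleph_0$, making every $M \in \K$ $\lambda$-closed for all uncountable $\lambda$; you then invoke Lemma~\ref{pres-2}, whereas the paper constructs the witnessing $\lambda$-directed system inline (which is exactly the proof of Lemma~\ref{pres-2}), and your explicit case split on the regularity of $|UM|$ just spells out what the paper leaves implicit.
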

\begin{proof}
  By Lemma \ref{zfc-size}, $|M|_{\K} \le |U M|$. It remains to see that $|U M| \le |M|_{\K}$. If $\lambda = \aleph_0$, note that the definition implies that $|M|_{\K}$ is always infinite, so $|M|_{\K} \ge \lambda$. Now let $\lambda$ be a regular uncountable cardinal and assume that $|U M| \ge \lambda$. We show that $M$ is \emph{not} $\lambda$-presentable.

  We note that (by the $L_{\omega, \omega}$ Löwenheim-Skolem-Tarski theorem) $M$ can be obtained as a $\lambda$-directed union of elementary substructures of cardinality strictly less than $\lambda$. Thus there is a $\lambda$-directed system $\seq{M_i : i \in I}$ whose union is $M$ but for which there is no $i \in I$ with $M \lea M_i$. Therefore $M$ is not $\lambda$-presentable. 
\end{proof}

\begin{defin}\label{kstardef}
  Let $\mu$ be a regular cardinal. Let $K^\mu$ be the class of $M \in K$ isomorphic to $(L_\alpha, \in)$ that is such that for all $\beta < \alpha$ $[L_\beta]^{< \mu} \cap L \subseteq L_\alpha$. Let $\K^\mu := (K^\mu, \preceq)$.
\end{defin}

Notice that $\K = \K^{\aleph_0}$ and $\K^\mu$ is an $(\aleph_1 + \mu)$-AEC. Moreover, when $\mu > \aleph_0$ the behavior of its number of models is different from $\K$. We will use the following consequences of Jensen's convering Lemma (see \cite[Chapter V]{devlinbook}):

\begin{fact}\label{zsharp-fact}
  If $0^\sharp$ exists, then every uncountable cardinal is inaccessible in $L$. If $0^\sharp$ does not exist, then:
  \begin{enumerate}
  \item For any cardinal $\lambda$, $\cf{\lambda}^L \le \cf{\lambda} + \aleph_1$.
  \item For any singular cardinal $\lambda$, $\lambda^+ = (\lambda^+)^L$.
  \end{enumerate}
\end{fact}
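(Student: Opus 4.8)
The statement is classical inner-model theory, and the plan is to derive it from two deep external inputs---Silver's analysis of $0^\sharp$ and Jensen's Covering Lemma (\cite[Ch.~V]{devlinbook})---rather than to reprove those. I would split along the stated dichotomy.

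For the case where $0^\sharp$ exists, I would invoke Silver's theorem: the existence of $0^\sharp$ produces a closed unbounded proper class $I$ of indiscernibles for $L$, and every uncountable cardinal of $V$ lies in $I$. Each $\kappa \in I$ is a limit of members of $I$, all of which $L$ regards as cardinals, so $\kappa$ is a limit cardinal in $L$; indiscernibility then reflects this upward to show that $\kappa$ is inaccessible (in fact weakly compact, and more) in $L$. Since every uncountable cardinal belongs to $I$, every uncountable cardinal is inaccessible in $L$, as required.

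For the case where $0^\sharp$ does not exist, the key tool is Jensen's Covering Lemma: every uncountable set of ordinals is contained in a constructible set of the same cardinality. For part (2) I would note that $(\lambda^+)^L \le \lambda^+$ is trivial, while the reverse inequality $(\lambda^+)^L = \lambda^+$ for singular $\lambda$ is precisely Jensen's weak covering property, the standard corollary of the Covering Lemma. For part (1), I would first observe that $\cf{\lambda} \le \cf{\lambda}^L$ holds outright, since any cofinal map living in $L$ also lives in $V$; so only the displayed upper bound needs work, and it is trivial for regular $\lambda$. For singular $\lambda$ I would argue by induction on $\lambda$: take $X \subseteq \lambda$ cofinal of order type $\cf{\lambda}$, enlarge it to an uncountable set---this is exactly where the slack $+\aleph_1$ is forced, since the Covering Lemma says nothing about countable sets---and cover it by a constructible $Y$ of cardinality $\kappa := \cf{\lambda} + \aleph_1$. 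Replacing $Y$ by $Y \cap \lambda$, it is a constructible cofinal subset of $\lambda$ with $|Y| = \kappa < \lambda$, so $\gamma := \otp(Y) < \lambda$ and $\cf{\gamma} = \cf{\lambda}$. Since $Y \in L$ witnesses a cofinal map $\gamma \to \lambda$, we get $\cf{\lambda}^L = \cf{\gamma}^L$, and the induction hypothesis applied to $\gamma < \lambda$ gives $\cf{\gamma}^L \le \cf{\gamma} + \aleph_1 = \kappa$, completing the step.

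The two easy halves are the trivial inequalities $\cf{\lambda} \le \cf{\lambda}^L$ and $(\lambda^+)^L \le \lambda^+$; all the genuine content sits in the Covering Lemma and in weak covering. I expect weak covering to be the main obstacle: it is the deepest ingredient, and it genuinely requires singularity, since for regular $\lambda$ the equality $(\lambda^+)^L = \lambda^+$ can fail (for instance after L\'evy-collapsing an $L$-cardinal, a forcing that adds no $0^\sharp$), so no uniform argument that ignores cofinality can succeed. The $+\aleph_1$ in part (1) is likewise not cosmetic: the countable-cofinality case is exactly where $\cf{\lambda}^L$ may jump up to $\aleph_1$, and is the reason one cannot simply read the $L$-cofinality off a countable cofinal set.
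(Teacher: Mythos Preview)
The paper gives no proof of this Fact: it is simply quoted, with a reference to Devlin. Your outline is the standard derivation from Silver's theorem and the Covering Lemma, and your treatment of the $0^\sharp$-exists case and of part~(2) is correct.

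Your induction for part~(1), however, has a gap. You pass from the cardinal $\lambda$ to the ordinal $\gamma=\otp(Y\cap\lambda)$ and then invoke the induction hypothesis on $\gamma<\lambda$; but the claim is stated only for \emph{cardinals}, and $\gamma$ need not be one. Strengthening the induction to all ordinals does not help: the statement $\cf{\alpha}^L\le\cf{\alpha}+\aleph_1$ can fail for ordinals. Take $\alpha=\omega_2^L$ after forcing over $L$ to collapse $\omega_2^L$ to $\omega_1$ with countable conditions; this forcing is $\omega_1$-closed, so $\omega_1^L=\omega_1$ and no $0^\sharp$ is added, yet $\cf{\alpha}^L=\omega_2^L>\omega_1=\cf{\alpha}+\aleph_1$. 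The clean fix drops the induction entirely: set $\mu=\cf{\lambda}^L$, observe $\cf{\mu}=\cf{\lambda}$ in $V$, take $X\subseteq\mu$ cofinal of size $\cf{\lambda}$, pad to uncountable, and cover by $Y\in L$ of $V$-cardinality $\kappa=\cf{\lambda}+\aleph_1$. Since $\mu$ is $L$-regular and $Y\cap\mu\in L$ is cofinal in $\mu$, one gets $\mu=\otp(Y\cap\mu)<\kappa^+$, hence $|\mu|\le\kappa$. (Applying the same collapse with $\lambda=(\aleph_{\omega_2})^L$ shows this is sharp: only $|\cf{\lambda}^L|\le\cf{\lambda}+\aleph_1$ holds in general, not the literal ordinal inequality, though the weaker form is what the paper's applications actually use.)
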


\begin{thm}\label{card-nm}
  Let $\mu$ be a regular cardinal and let $\lambda \ge \mu$.

  \begin{enumerate}
  \item If $\cf{\lambda}^L < \mu$, then $I (\K^\mu, \lambda) = 1$.
  \item If $\cf{\lambda}^L \ge \mu$, then $I (\K^\mu, \lambda) \ge \lambda$.
  \item\label{cond-3} If $(\neg \exists \lambda_0 < \lambda^+ . \lambda^+ = \lambda_0^+ \land \cf{\lambda_0} < \mu)^L$, then $I (\K^\mu, \lambda) = \lambda^+$.
  \end{enumerate}
\end{thm}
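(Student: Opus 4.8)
My plan is to count, in each case, the ordinals $\alpha$ for which $(L_\alpha, \in) \in K^\mu$, and then translate back to isomorphism classes. By Facts \ref{l-fact-0} and \ref{l-fact-1}, every member of $K$ of cardinality $\lambda$ is isomorphic to a unique $(L_\alpha, \in)$ with $\alpha \in [\lambda, \lambda^+)$ (recalling $|L_\alpha| = |\alpha|$), and this gives a bijection between isomorphism classes and such ordinals. Hence $I(\K^\mu, \lambda)$ equals the number of $\alpha \in [\lambda, \lambda^+)$ with $(L_\alpha, \in) \models \mathrm{KP}$ and such that for all $\beta < \alpha$ every constructible subset of $L_\beta$ of cardinality $<\mu$ lies in $L_\alpha$. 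I will write $c(\beta)$ for the least ordinal $\gamma$ with $[L_\beta]^{<\mu} \cap L \subseteq L_\gamma$; membership in $K^\mu$ then amounts to $(L_\alpha,\in)\models \mathrm{KP}$ together with the closure demand $c(\beta) \le \alpha$ for all $\beta < \alpha$. By Fact \ref{l-fact-3} we always have $c(\beta) \le (|\beta|^+)^L$, and the set of closure ordinals is closed, being unbounded in $[\lambda,\lambda^+)$ exactly when $c(\beta) < \lambda^+$ for all relevant $\beta$.

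The technical heart is to compute $c(\beta)$ for $\beta \in [\lambda, \lambda^+)$. For such $\beta$, $\lambda$ is a cardinal of $L$ (being one in $V$) and $\lambda \le |\beta|^L \le |\beta| = \lambda$, so $|\beta|^L = \lambda$. Working inside $L$, where GCH holds (Fact \ref{l-fact-3}), I count the constructible subsets of $L_\beta$ of $L$-cardinality $<\mu$: by the $L$-version of Lemma \ref{sch-lem} there are $(\lambda^{<\mu})^L$ of them, which is $\lambda$ when $\cf{\lambda}^L \ge \mu$ and $(\lambda^+)^L$ when $\cf{\lambda}^L < \mu$. Since at most $|\gamma|^L$ sets can lie in $L_\gamma$, in the first regime they fit below some $\gamma < (\lambda^+)^L$, whereas in the second they are cofinal in $L_{(\lambda^+)^L}$, forcing $c(\beta) = (\lambda^+)^L$. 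The delicacy is that $[L_\beta]^{<\mu}\cap L$ is measured with $V$-cardinality, which can be strictly smaller than $L$-cardinality, so I must verify that constructible subsets of $L_\beta$ that are small in $V$ but large in $L$ do not appear cofinally later than the $L$-small ones. This is where Jensen's covering lemma (Fact \ref{zsharp-fact}) enters: a constructible subset of $L_\beta$ of $L$-size $\lambda$ cannot have $V$-size $<\mu$ because $\lambda$ is a $V$-cardinal, and the covering dichotomy controls the $L$-cardinals in $[\mu,\lambda)$, bounding the $L$-ranks of the $V$-small constructible subsets. I expect this passage between $V$- and $L$-cardinality to be the main obstacle.

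With $c$ understood, case (1) proceeds as follows. If $\cf{\lambda}^L < \mu \le \lambda$ then a cofinal map of $L$-length $<\lambda$ lies in $L \subseteq V$ and is $V$-cofinal, so $\lambda$ is singular in $V$; consequently $0^\sharp$ does not exist (otherwise every uncountable cardinal is inaccessible in $L$ by Fact \ref{zsharp-fact}), and $(\lambda^+)^L = \lambda^+$ by Fact \ref{zsharp-fact}(2). Then $c(\lambda) = (\lambda^+)^L = \lambda^+$, so no $\alpha \in (\lambda, \lambda^+)$ can satisfy the closure demand (take $\beta = \lambda < \alpha$). On the other hand $(L_\lambda, \in)$ is a model of KP by Fact \ref{l-fact-2}, and its closure demand holds because $\lambda$ is a limit cardinal of $L$, so $c(\beta) \le (|\beta|^+)^L < \lambda$ for every $\beta < \lambda$. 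Hence $\alpha = \lambda$ is the unique closure ordinal and $I(\K^\mu, \lambda) = 1$.

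For case (2) I will produce $\lambda$ pairwise non-isomorphic models by condensation. Fixing $\theta > \lambda^+$ with $(L_\theta, \in) \models \mathrm{KP}$, I build a strictly increasing chain $\seq{N_i : i < \lambda}$ of elementary substructures of $(L_\theta, \in)$, each of cardinality $\lambda$ and closed under the Skolem functions of $(L_\theta,\in)$ and under $<\mu$-sequences lying in $L$ (re-closing at limit stages); this stays of size $\lambda$ precisely because $\cf{\lambda}^L \ge \mu$ forces $(\lambda^{<\mu})^L = \lambda$. By condensation each transitive collapse is some $(L_{\alpha_i}, \in)$ with $\alpha_i \in [\lambda, \lambda^+)$; elementarity yields KP and the $<\mu$-closure yields the $K^\mu$ closure demand, so each $(L_{\alpha_i}, \in) \in K^\mu$, and the $\alpha_i$ are distinct, giving $I(\K^\mu, \lambda) \ge \lambda$. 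Finally, for case (3) the hypothesis says that in $L$ the cardinal $\lambda^+$ is not the successor of a cardinal of cofinality $<\mu$; combined with the computation of $c$ this yields $c(\beta) < \lambda^+$ for every $\beta \in [\lambda, \lambda^+)$ (when $(\lambda^+)^L < \lambda^+$ this is automatic from $c(\beta)\le(\lambda^+)^L$, and the remaining borderline sub-case $(\lambda^+)^L = \lambda^+$ is exactly the one the hypothesis admits, forcing $\cf{\lambda}^L \ge \mu$). The closure ordinals are then club in $[\lambda, \lambda^+)$; intersecting this club with the club of $\alpha$ such that $(L_\alpha,\in) \preceq (L_{\lambda^+}, \in)$ to secure KP, as in the proof of Lemma \ref{aleph1-aec-lem}, produces $\lambda^+$ members of $K^\mu$ of cardinality $\lambda$, and since $[\lambda,\lambda^+)$ has only $\lambda^+$ ordinals we conclude $I(\K^\mu, \lambda) = \lambda^+$.
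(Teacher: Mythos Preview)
Your overall framework---reducing to a count of ordinals $\alpha\in[\lambda,\lambda^+)$ with $(L_\alpha,\in)\in K^\mu$, introducing the closure function $c(\beta)$, and handling case (1) via Jensen covering and case (3) via an intersection of clubs---is essentially the paper's approach, just phrased in slightly different language. The paper builds a single increasing continuous chain $\seq{\alpha_i:i\le\delta}$ below $\lambda^+$ by iterating $\alpha\mapsto(\text{least }\beta>\alpha\text{ with }(L_\beta,\in)\in\K\text{ and }[L_\alpha]^{<\mu}\cap L\subseteq L_\beta)$, and then takes limit points; your club argument in case (3) is the same idea.

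There is, however, a genuine gap in your case (2). You build $N_i\preceq(L_\theta,\in)$ closed under constructible $<\mu$-subsets and then collapse to $(L_{\alpha_i},\in)$, claiming that the closure condition survives. It need not: given $\beta<\alpha_i$ and $Y\in L$ with $Y\subseteq L_\beta$ and $|Y|<\mu$, you would need $\pi^{-1}[Y]\subseteq N_i$ to lie in $L$ in order to invoke the closure of $N_i$, but the collapse map $\pi$ is generally not in $L$, so $\pi^{-1}[Y]$ has no reason to be constructible. In other words, ``small constructible subsets of $N_i$'' and ``small constructible subsets of $L_{\alpha_i}$'' are not matched by the collapse. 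The paper avoids this by working directly with ordinals below $\lambda^+$ and iterating the closure operator there---no collapse is needed because one stays inside the $L$-hierarchy throughout. Your own case (3) machinery already does this: simply observe that when $\cf{\lambda}^L\ge\mu$ one has $c(\beta)<(\lambda^+)^L$ for every $\beta\in[\lambda,(\lambda^+)^L)$, so the closure ordinals together with the KP ordinals form a set of size at least $|(\lambda^+)^L|\ge\lambda$; the condensation detour is both unnecessary and flawed.

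As a minor remark, you are right to flag the $V$-versus-$L$ cardinality issue in the definition of $[L_\beta]^{<\mu}\cap L$; the paper's proof glosses over this as well, and your instinct to invoke covering to control it is the correct one.
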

\begin{proof}
  We build an ordinal $\delta \le \lambda^+$ and an increasing continuous chain $\seq{\alpha_i : i \le \delta}$, as follows. Set $\alpha_0 := \lambda$. If $i$ is limit, let $\alpha_i := \sup_{j < i} \alpha_j$. Now given $i < \lambda^+$, let $\beta > \alpha_i$ be least such that $(L_\beta, \in) \in \K$ and $[L_{\alpha_i}]^{<\mu} \cap L \subseteq L_\beta$. If $\beta \ge \lambda^+$, we set $\delta := i$ and stop. If $\beta < \lambda^+$, let $\alpha_{i + 1} := \beta$ and continue the induction.

  Now first assume that $\cf{\lambda}^L < \mu$. Then by Fact \ref{zsharp-fact} $0^\sharp$ does not exist and $(\lambda^+)^L = \lambda^+$. Since in $L$, $\lambda^{<\mu} = \lambda^+$, we must have that $\delta = 0$. Moreover it is easy to check that $(L_\lambda, \in) \in \K^\mu$. This proves that $I (\K^\mu, \lambda) = 1$.

  Assume now that $\cf{\lambda}^L \ge \mu$. Then $\delta \ge (\lambda^+)^L$. Letting $C$ be the set of limit points of the sequence $\seq{\alpha_i : i < \delta}$, we get that $(L_\alpha, \in) \in \K^\mu$ for any $\alpha \in C$, and $|C| \ge |(\lambda^+)^L| \ge \lambda$. Further if in $L$ $\lambda^+$ is not the successor of a cardinal of cofinality $\mu$, then $\delta = \lambda^+$ so $|C| = \lambda^+$.
\end{proof}

We can conclude that the categoricity spectrum of $\K^\mu$ alternates if $V = L$.  Indeed, the same is true if we merely assume that $0^\sharp$ does not exist and $\mu \ge \aleph_2$; for $\mu<\aleph_2$, the implications of the nonexistence of $0^\sharp$ are less clear. In contrast, if $0^\sharp$ exists, then $\K^\mu$ has many models in every cardinality.

\begin{cor}
  Let $\mu$ be a regular cardinal and let $\lambda \ge \mu$.
  \begin{enumerate}
  \item If $V = L$, then:
    \begin{align*}I(\K^\mu, \lambda) = 
  \begin{cases}
  	1 & \text{ if }\,\cf{\lambda} < \mu\\
  	\lambda^+ & \text{ if }\,\cf{\lambda} \ge \mu
  \end{cases}	
  \end{align*}
  \item If $0^\sharp$ exists, then $I (\K^\mu, \lambda) = \lambda^+$.
  \item If $0^\sharp$ does not exist, then:
    \begin{enumerate}
    \item If $\mu \ge \aleph_2$ and $\cf{\lambda} < \mu$, then $I (\K^\mu, \lambda) = 1$.
    \item If $\cf{\lambda} \in [\mu, \lambda)$, then $I (\K^\mu, \lambda) = \lambda^+$.
    \item If $\lambda$ is regular, then $I (\K^\mu, \lambda) \ge \lambda$.
    \end{enumerate}
  \end{enumerate}
\end{cor}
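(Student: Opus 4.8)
The plan is to read off each clause of the corollary from Theorem \ref{card-nm}, using Fact \ref{zsharp-fact} to convert its hypotheses---which are phrased in terms of cofinality computed in the inner model $L$---into the $V$-level hypotheses appearing in the statement. Two general facts about $L$ will be used throughout: every cardinal of $V$ is a cardinal of $L$ (so $(\lambda^+)^L$ is always the $L$-successor of $\lambda$, with $L$-predecessor $\lambda$), and cofinality can only increase when passed to $L$, i.e.\ $\cf{\lambda} \le \cf{\lambda}^L$ for every limit ordinal $\lambda$, since any map cofinal in $L$ is cofinal in $V$.

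First consider (1), so $V = L$ and all cofinality computations are absolute. If $\cf{\lambda} < \mu$ then $\cf{\lambda}^L < \mu$, and the first part of Theorem \ref{card-nm} gives $I(\K^\mu, \lambda) = 1$. If $\cf{\lambda} \ge \mu$ I would apply the third part: since $\lambda^+$ is the successor of $\lambda$ and $\cf{\lambda} \ge \mu$, in $L = V$ the cardinal $\lambda^+$ is not the successor of a cardinal of cofinality less than $\mu$, whence $I(\K^\mu, \lambda) = \lambda^+$. For (2), suppose $0^\sharp$ exists; then by Fact \ref{zsharp-fact} every uncountable cardinal is inaccessible in $L$. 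For uncountable $\lambda \ge \mu$ this gives $\cf{\lambda}^L = \lambda \ge \mu$, and as $(\lambda^+)^L$ is the $L$-successor of $\lambda$ with $L$-predecessor $\lambda$ of $L$-cofinality $\lambda \ge \mu$, the hypothesis of the third part of Theorem \ref{card-nm} holds, so $I(\K^\mu, \lambda) = \lambda^+$. The only remaining possibility is $\lambda = \mu = \aleph_0$, where the predecessor $\aleph_0$ has cofinality $\aleph_0 \not< \mu$, so the same part applies (alternatively one may invoke Lemma \ref{aleph1-aec-lem}).

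Finally assume $0^\sharp$ does not exist, so both clauses of Fact \ref{zsharp-fact} are available. In case (3a), $\mu \ge \aleph_2$ and $\cf{\lambda} < \mu$ give $\cf{\lambda}^L \le \cf{\lambda} + \aleph_1 = \max(\cf{\lambda}, \aleph_1) < \mu$, so the first part of Theorem \ref{card-nm} yields $I(\K^\mu, \lambda) = 1$. In case (3c), $\lambda$ regular forces $\cf{\lambda}^L \ge \cf{\lambda} = \lambda \ge \mu$, so the second part gives $I(\K^\mu, \lambda) \ge \lambda$; note that here I would only claim the lower bound, as we have no control over whether $(\lambda^+)^L$ equals the true $\lambda^+$ for regular $\lambda$.

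The delicate case is (3b), where $\cf{\lambda} \in [\mu, \lambda)$, so $\lambda$ is singular. Here $\cf{\lambda}^L \ge \cf{\lambda} \ge \mu$, and---this is the crux---since $\lambda$ is singular the second clause of Fact \ref{zsharp-fact} gives $(\lambda^+)^L = \lambda^+$, so the conclusion $I = \lambda^+$ of the third part of Theorem \ref{card-nm} genuinely refers to the true $\lambda^+$. As the $L$-predecessor $\lambda$ of $(\lambda^+)^L$ has $L$-cofinality at least $\mu$, the condition of that third part holds and $I(\K^\mu, \lambda) = \lambda^+$. I expect the main obstacle to lie precisely in this last verification: one must ensure that the successor-of-a-singular computation is absolute between $L$ and $V$---which is exactly what Fact \ref{zsharp-fact}(2) supplies---since otherwise the $\lambda^+$ delivered by Theorem \ref{card-nm} could a priori collapse to the smaller ordinal $(\lambda^+)^L$, and the claimed count $\lambda^+$ would be lost.
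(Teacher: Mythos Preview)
Your argument is correct and follows the same route as the paper, whose proof is simply ``By Fact \ref{zsharp-fact} and Theorem \ref{card-nm}''; you have merely filled in the details that are left implicit there.

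One small correction in your treatment of (2): the hypothesis of Theorem \ref{card-nm}(3) is a statement in $L$ about the ordinal $\lambda^+$ (the $V$-successor), not about $(\lambda^+)^L$. When $0^\sharp$ exists these are different ordinals, and your sentence ``$(\lambda^+)^L$ is the $L$-successor of $\lambda$ with $L$-predecessor $\lambda$'' is beside the point. The correct verification is actually simpler: since $\lambda^+$ is an uncountable cardinal of $V$, Fact \ref{zsharp-fact} makes it inaccessible in $L$, hence not an $L$-successor at all, and the condition of part (3) holds vacuously. Similarly, your closing worry about the $\lambda^+$ in the conclusion ``collapsing'' to $(\lambda^+)^L$ is misplaced: the conclusion of Theorem \ref{card-nm}(3) always refers to the $V$-successor; the role of $(\lambda^+)^L = \lambda^+$ in (3b) is solely to identify the $L$-predecessor of $\lambda^+$ as $\lambda$ so that the \emph{hypothesis} of part (3) can be checked.
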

\begin{proof}
  By Fact \ref{zsharp-fact} and Theorem \ref{card-nm}.
\end{proof}

We now show that cardinality and internal sizes no longer coincide in $\K^\mu$ (compare with Theorem \ref{k-internal-card}). We will use that both $\K$ and $\K^\mu$ admit intersections:

\begin{fact}\label{l-admit-intersec}
  $\K$ and $\K^\mu$ admit intersections.
\end{fact}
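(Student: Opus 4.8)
The plan is to exhibit, in each class, a concrete closure operator on the subsets of a fixed model whose closed sets are exactly the universes of the relevant submodels; since an arbitrary intersection of closed sets is again closed, this yields admission of intersections. For $\K$ the relevant operator is the definable Skolem hull. Indeed, any model of $\mathrm{KP} + V = L$ carries definable Skolem functions, obtained from the canonical well-ordering $<_L$ by sending a satisfiable existential formula to its $<_L$-least witness. Thus, given $N \in \K$ and $A \subseteq U N$, every elementary submodel is closed under these (fixed, $N$-definable) functions, and so is the intersection $\bigcap\{U M : M \preceq N,\ A \subseteq U M\}$; by Tarski--Vaught this intersection is the universe of an elementary, hence $\lea$-, submodel of $N$, and it is well-founded and models $\mathrm{KP} + V = L$ by elementarity. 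This intersection is exactly $\ccl^N(A)$, so $\K$ admits intersections.

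For $\K^\mu$ I would first record a structural lemma: every $M \in \K^\mu$ contains $L_\mu$ as a transitive subset, so that its transitive collapse is the identity below $\mu$. This is proved by induction on $\bar\beta < \mu$: since $|L_{\bar\beta}| = |\bar\beta| < \mu$ (Fact \ref{l-fact-0}), \emph{every} constructible subset of $L_{\bar\beta}$ has size $< \mu$, so the $\mu$-closure condition forces all of $\mathcal{P}(L_{\bar\beta}) \cap L$ into the collapse, which pins down $L_{\bar\beta}$ and $\bar\beta$ as genuine elements. I would then enlarge the closure operator used for $\K$ by also closing under the clause ``adjoin any constructible $s$ with $s \subseteq X$ and $|s| < \mu$''. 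Such an $s$ is automatically an element of $U N$: for $s \subseteq L_{\bar\beta}$ with $\bar\beta < \alpha$, the hypothesis $N \in \K^\mu$ gives $[L_{\bar\beta}]^{<\mu} \cap L \subseteq L_\alpha = U N$. Both defining clauses (Skolem closure and the small-subset clause) pass trivially to intersections, so the closed sets form an intersection-closed family, and $\ccl^N(A)$ is the closure of $A \cup L_\mu$.

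It remains to match this operator to the class, i.e.\ to show that for $M \preceq N$ one has $M \in \K^\mu$ if and only if $U M$ is closed under adjoining small constructible subsets. Granting this equivalence, $\ccl^N(A)$ is closed, hence is the universe of a member of $\K^\mu$ that is a $\lea$-substructure of $N$, and $\K^\mu$ admits intersections. I expect \textbf{this equivalence to be the main obstacle}, precisely because a generic member of $\K^\mu$ need not be transitive above $\mu$: the $\mu$-closure condition is phrased in terms of the transitive collapse $L_\gamma$ of $M$, whereas the closure operator acts on the possibly non-transitive set $U M \subseteq L_\alpha$. The collapse $\pi$ is the identity only up to $\mu$ (by the structural lemma), so above $\mu$ one must carefully track how $\pi$ moves ordinals and verify that the image, respectively preimage, under $\pi$ of a constructible set of size $< \mu$ is again constructible and lands inside $L_\gamma$, respectively $U M$. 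The delicate point is that preimages under a collapse need not be constructible, so the verification should exploit that below any $b \in U M \cap \OR$ the relevant witnesses can be recovered from a constructible enumeration definable from parameters in $U M$; carrying this out uniformly, using GCH in $L$ (Fact \ref{l-fact-3}) to bound the constructibility levels, is where the real work lies.
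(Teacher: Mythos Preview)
For $\K$, your approach via definable Skolem functions derived from the canonical well-ordering $<_L$ is exactly the paper's.

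For $\K^\mu$, the paper offers no argument beyond the single sentence ``That $\K^\mu$ admits intersections will then directly follow from its definition.'' You go considerably further, proposing a concrete enlargement of the Skolem closure and correctly flagging the translation across the Mostowski collapse as the heart of the matter. There is, however, a gap that appears \emph{before} the obstacle you identify. Your claim that any constructible $s$ with $s \subseteq X$ and $|s| < \mu$ automatically lies in $UN = L_\alpha$ invokes the $\mu$-closure hypothesis on $N$, but that hypothesis only covers $s \subseteq L_{\bar\beta}$ for $\bar\beta < \alpha$; it says nothing about $s$ cofinal in $\alpha$. Such $s$ do arise: by the proof of Theorem~\ref{card-nm}, $(L_\lambda, \in) \in \K^\mu$ whenever $\lambda$ is a cardinal, even one with $\cf{\lambda}^L < \mu$, and then a constructible cofinal sequence $s \subseteq \lambda \subseteq L_\lambda$ of length $< \mu$ has rank $\lambda$ and hence lies outside $L_\lambda$. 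Taking $N = M = L_\lambda$ shows both that your closure operator can try to adjoin sets outside $UN$ and, more damagingly, that the equivalence you are aiming for is literally false as stated: $L_\lambda \in \K^\mu$, yet $U L_\lambda$ is not closed under your second clause.

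A natural repair is to restrict that clause to $s$ contained in some $L_b$ with $b$ an ordinal already in the current closure; this guarantees $s \in UN$ by the $\mu$-closure of $\alpha$ and rules out the cofinal case. With this amendment the verification that closed sets are exactly the $\K^\mu$-submodels of $N$ does reduce to the collapse analysis you sketch at the end, and your instinct that this is where the genuine work lies is correct. The paper's proof sketch simply does not engage with any of this.
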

\begin{proof}[Proof sketch]
  We show that $\K$ admits intersections. That $\K^\mu$ admits intersections will then directly follow from its definition.

  By \cite[II.3]{devlinbook}, there is a formula $\phi_{\text{WO}} (x, y)$ (in the language of set theory) such that for any $M \in \K$, $\phi_{\text{WO}} (x, y)$ well-orders the universe of $M$. Using $\phi_{\text{WO}}$, one can show that $M$ has \emph{definable Skolem functions}: for any formula $\phi (x, \by)$ there is a formula $\psi_{\phi} (x, \by)$ such that for any $M \in \K$ and $\bb \in \fct{<\omega}{U M}$:

  $$
  M \models \exists x \phi (x, \bb) \rightarrow \left(\exists! x (\psi_{\phi} (x, \bb) \land \phi (x, \bb))\right)
  $$

  The formula $\psi_{\phi} (x, \by)$ naturally induces a partial function $h_{\phi}: \fct{\ell (\by)}{U M} \rightarrow U M$ mapping $\by$ to $x$ whenever it exists. It is easy to see that for $M_0 \subseteq M$, $M_0$ is closed under the $h_\phi$'s if and only if $M_0 \preceq M$. Similarly, for any $A \subseteq U M$, $\cl^M (A)$ is nothing but the closure of $A$ under the $h_{\phi}$'s. Therefore $\cl^M (A) \preceq M$, as desired. In fact, we have shown that $\K$ is isomorphic (as a concrete category) to a \emph{universal} $\aleph_1$-AEC: the $\aleph_1$-AEC obtained by adding the $h_\phi$'s to each structure in $\K$.
\end{proof}



\begin{thm}\label{pres-example}
  Let $\mu$ be a regular cardinal and let $\lambda \ge \mu$. If $\lambda$ is regular or $\mu$-closed, then there are at least $\lambda^+$ non-isomorphic models in $\K^\mu$ of internal size $\lambda$.
\end{thm}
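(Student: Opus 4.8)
The plan is to reduce everything to the closure rank $|M|_{\cl}$ of Definition~\ref{norm-cl} and then to manufacture the models by iterating the $\K^\mu$-closure inside $L$.

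Since $\K^\mu$ admits intersections (Fact~\ref{l-admit-intersec}), Theorem~\ref{internal-cl} gives $|M|_{\K^\mu}=|M|_{\cl}$ for every $M\in\K^\mu$ with $|UM|>\LS(\K^\mu)$, so it suffices to build $\lambda^+$ models $M\in\K^\mu$ with $|M|_{\cl}=\lambda$ and pairwise distinct ordinal heights; non-isomorphism is then automatic by Fact~\ref{l-fact-1}. The construction is a tower in the spirit of the proof of Theorem~\ref{card-nm}. Fix a large $N=(L_\kappa,\in)\in\K^\mu$ and a set $A_0$ of $\lambda$ ordinals of $N$ above $\LS(\K^\mu)$, and build an increasing continuous sequence of heights $\langle\gamma_i:i<\lambda^+\rangle$ by setting $L_{\gamma_{i+1}}:=\cl^N(L_{\gamma_i}\cup\{c_i\})$ for a fresh ordinal $c_i$, and $L_{\gamma_i}:=\cl^N\!\left(\bigcup_{j<i}L_{\gamma_j}\right)$ at limits. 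Each $L_{\gamma_i}$ lies in $\K^\mu$, the heights strictly increase, and for $i<\lambda^+$ the model $L_{\gamma_i}$ is the $\K^\mu$-closure of $A_0\cup\{c_j:j<i\}$, a set of size $|i|+\lambda=\lambda$; hence $|L_{\gamma_i}|_{\cl}\le\lambda$ throughout.

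It remains to pin the internal size down to exactly $\lambda$, and here the two hypotheses are used differently. If $\lambda$ is $\mu$-closed, I use all $i<\lambda^+$: each $L_{\gamma_i}$ contains the $\lambda$ ordinals of $A_0$, so $|UL_{\gamma_i}|\ge\lambda>\LS(\K^\mu)$, and were $|L_{\gamma_i}|_{\cl}=\kappa<\lambda$, Theorem~\ref{size-ineq} would force $|UL_{\gamma_i}|\le\kappa^{<\mu}+\LS(\K^\mu)<\lambda$ (using $\mu$-closedness of $\lambda$), a contradiction; so $|L_{\gamma_i}|_{\cl}=\lambda$. If instead $\lambda$ is merely regular, I restrict to the $\lambda^+$ indices $i<\lambda^+$ with $\cf{i}=\lambda$: then $L_{\gamma_i}=\bigcup_{j<i}L_{\gamma_j}$ is a union of a cofinality-$\lambda$ chain of proper $\K^\mu$-submodels, and any generating set $B$ with $|B|<\lambda=\cf{i}$ is bounded, i.e.\ $B\subseteq L_{\gamma_{j^\ast}}$ for some $j^\ast<i$, whence $\cl^N(B)\subseteq L_{\gamma_{j^\ast}}\subsetneq L_{\gamma_i}$; thus no such $B$ generates and $|L_{\gamma_i}|_{\cl}=\lambda$. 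This last argument is exactly the computation $|(\alpha,\in)|_{\K}=\cf{\alpha}$ of Example~\ref{wo-example}, transported from initial segments to $\K^\mu$-submodels.

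The main obstacle is the verification that the tower behaves as claimed: that the closures $\cl^N(\cdots)$ of $\le\lambda$-sized sets exist as set-sized $\K^\mu$-submodels with the asserted generating sets, and that the heights can be made strictly increasing and continuous---this rests on the GCH computations of $[L_\beta]^{<\mu}\cap L$ inside $L$ (Fact~\ref{l-fact-3}), which also explain why, when $\cf{\lambda}<\mu$, the closures blow up to cardinality $\lambda^{<\mu}=\lambda^+$, so that internal size $\lambda$ is realized by models of cardinality $\lambda^+$---the ``drop'' predicted by Theorem~\ref{internal-size-mu-aec}. A secondary, purely bookkeeping, difficulty is that the construction happens in $L$ while $|UM|$ and $|M|_{\cl}$ are computed in $V$; this is absorbed using Fact~\ref{zsharp-fact} together with the elementary inequality $\cf{\lambda}\le\cf{\lambda}^L$, which keeps the $L$-combinatorics aligned with the ambient cardinal arithmetic regardless of whether $0^\sharp$ exists.
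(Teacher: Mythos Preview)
Your approach is essentially the paper's: work inside a big $N\in\K^\mu$, build a $\lambda^+$-chain of closures $M_i=\ccl^N(A_i)$ with $|A_i|=\lambda$, get non-isomorphism from distinct heights, bound $|M_i|_{\K^\mu}\le\lambda$ via Theorem~\ref{internal-cl}, and get the reverse inequality from $|UM_i|\ge\lambda$ when $\lambda$ is $\mu$-closed or from the $\lambda$-cofinal chain when $\lambda$ is regular. The paper carries this out with $N=(L_{\theta^+},\in)$ for $\theta=\lambda^{<\mu}$, $A_i=\lambda\cup\{\alpha_j:j<i\}$, and $\alpha_i$ chosen outside $M_i$ (possible since $|UM_i|\le\theta<|UN|$ by Theorems~\ref{size-ineq} and~\ref{internal-cl}); your iterative definition $M_{i+1}=\ccl^N(M_i\cup\{c_i\})$ gives the same models.

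One genuine slip: you write the closures as $L_{\gamma_i}$, but $\ccl^N(A)$ is the Skolem hull of $A$ in $N$ (see the proof of Fact~\ref{l-admit-intersec}) and is in general \emph{not} transitive---it is only \emph{isomorphic} to some $(L_{\gamma_i},\in)$. So equations like ``$L_{\gamma_{i+1}}:=\ccl^N(L_{\gamma_i}\cup\{c_i\})$'' conflate the hull with its transitive collapse. This is harmless for the argument (distinct heights still give non-isomorphism, and the union at $\cf{i}=\lambda\ge\mu$ is still a $\mu$-directed union of $\K^\mu$-submodels, hence in $\K^\mu$ and equal to its own closure), but you should write $M_i$ for the hulls and reserve $\gamma_i$ for the height of the collapse.

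Your final paragraph is unnecessary and a bit misdirected. The construction takes place entirely in $V$: $N=(L_{\theta^+},\in)$ is in $\K^\mu$ because $[L_\beta]^{<\mu}\cap L\subseteq L_{\theta^+}$ for $\beta<\theta^+$ by Fact~\ref{l-fact-3} (GCH in $L$), and the size bound $|UM_i|\le\lambda^{<\mu}$ comes straight from Theorems~\ref{size-ineq} and~\ref{internal-cl}. No appeal to Fact~\ref{zsharp-fact} or to any $L$-versus-$V$ bookkeeping is required; the theorem is a ZFC statement and the proof stays inside ZFC.
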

\begin{proof}
  Let $\theta := \lambda^{<\mu}$ and let $N := (L_{\theta^+}, \in)$. Note that $N \in \K$ by Fact \ref{l-fact-2} and $N \in \K^\mu$ by Fact \ref{l-fact-3}. We build an increasing chain of ordinals $\seq{\alpha_i : i < \lambda^+}$ such that:

  \begin{enumerate}
  \item $\alpha_i < \theta^+$ for all $i < \lambda^+$.
  \item $\alpha_i \notin \cl^N (\lambda \cup \{\alpha_j : j < i\})$.
  \end{enumerate}

  For $i < \lambda^+$, write $M_i := \cl^N (\lambda \cup \{\alpha_j : j < i\})$.

  \underline{This is possible}: Fix $i < \lambda^+$ and assume inductively that $\seq{\alpha_j : j < i}$ have been constructed. By Theorems \ref{size-ineq} and \ref{internal-cl}, $|U M_i| \le \theta$. Since $|U N| > \theta$, we can choose $\alpha_i$ as desired.
  
  \underline{This is enough}: We have that for $i < j < \lambda^+$, $M_i \not \cong M_j$ because $M_j$ has ``more ordinals'' than $M_j$ (see e.g.\ Fact \ref{l-fact-1}). If $\lambda$ is regular, let $C$ be the set of $i < \lambda^+$ such that $\cf{i} = \lambda$, otherwise let $C := \lambda^+$. We show that for $i \in C$, $|M_i|_{\K^\mu} = \lambda$. By definition, $|M_i|_{\ccl} \le \lambda$ (see Definition \ref{norm-cl}). Thus by Theorem \ref{internal-cl}, $r_{\K^\mu} (M_i) + \mu \le \lambda^+$, hence $|M_i|_{\K^\mu} \le \lambda$. If $\lambda$ is regular, then $\cf{i} = \lambda$ so $M_i = \bigcup_{j < i} M_j$ and $M_j \lta M_i$, so $M_i$ is not $\lambda$-presentable, i.e.\ $|M_i|_{\K^\mu} \ge \lambda$. Assume now that $\lambda$ is $\mu$-closed. If $|M_i|_{\K^\mu} < \lambda$, then by the proof of Theorem \ref{size-ineq}, $|U M_i| \le \left(|M_i|_{\K^\mu} + \mu)\right)^{<\mu} < \lambda$. Because $\lambda \subseteq U M_i$, we must have that $|U M_i| \ge \lambda$, a contradiction.
\end{proof}

\begin{question}
  In Theorem \ref{pres-example}, can we conclude that $\K^\mu$ has \emph{exactly} $\lambda^+$ non-isomorphic model of internal size $\lambda$? More generally, do we have that in any $\mu$-AEC $\K$ and any $\lambda > \LS (\K)$ there can be at most $2^\lambda$ non-isomorphic models of internal size $\lambda$?
\end{question}

Recalling Theorem \ref{card-nm}, we see that the internal sizes behave very differently from cardinalities: 

\begin{cor}\label{internal-nm}
  Let $\mu$ be a regular cardinal and let $\lambda \ge \mu$. Assume $\SCH_{\mu, \lambda}$. Then $\K^\mu$ has at least $\lambda^+$ non-isomorphic models of internal size $\lambda$.
\end{cor}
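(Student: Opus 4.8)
The plan is to reduce everything to Theorem \ref{pres-example}, whose conclusion is exactly the one we want---at least $\lambda^+$ non-isomorphic models of internal size $\lambda$---but whose hypothesis requires $\lambda$ to be \emph{either regular or $\mu$-closed}. So the only real content is to verify that, granted $\SCH_{\mu, \lambda}$, every $\lambda \ge \mu$ already falls into one of these two cases. In other words, the corollary is essentially a repackaging of Theorem \ref{pres-example} under a weaker-looking set-theoretic hypothesis.

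First I would split on the cofinality of $\lambda$. If $\lambda$ is regular, then it is immediately within the scope of Theorem \ref{pres-example} and we are done. Otherwise $\lambda$ is singular, hence a limit cardinal, and in particular it is not the successor of \emph{any} cardinal---so, trivially, not the successor of a cardinal of cofinality less than $\mu$. At this point I would invoke Lemma \ref{mu-closed-sch}: its second clause is the conjunction of $\SCH_{\mu, \lambda}$ with the statement that $\lambda$ is not the successor of a cardinal of cofinality less than $\mu$, and we have just observed the second conjunct holds automatically for limit cardinals. Thus the assumed $\SCH_{\mu, \lambda}$ alone forces $\lambda$ to be $\mu$-closed, again placing us in the scope of Theorem \ref{pres-example}.

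In either case Theorem \ref{pres-example} then yields at least $\lambda^+$ pairwise non-isomorphic models of $\K^\mu$ of internal size $\lambda$, as desired. I do not expect any substantive obstacle: the whole argument is a short case analysis, and the one step worth stating carefully is the use of Lemma \ref{mu-closed-sch} in the singular case, where the reduction succeeds precisely because a singular cardinal is a limit cardinal and therefore discharges the ``not a successor of small cofinality'' clause for free.
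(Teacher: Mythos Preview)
Your proposal is correct and essentially matches the paper's proof: both reduce to Theorem \ref{pres-example} by a short case split, invoking Lemma \ref{mu-closed-sch} in the non-regular/limit case to extract $\mu$-closedness from $\SCH_{\mu,\lambda}$. The only cosmetic difference is that the paper splits on successor versus limit rather than regular versus singular (which merely shifts where the weakly inaccessible cardinals land), and the paper leaves the appeal to Lemma \ref{mu-closed-sch} implicit.
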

\begin{proof}
  If $\lambda$ is a successor, this is Theorem \ref{pres-example}. If $\lambda$ is a limit, then by $\SCH_{\mu, \lambda}$, $\lambda$ is $\mu$-closed so Theorem \ref{pres-example} also applies.
\end{proof}

\begin{question}
  Can we prove in ZFC that $\K^\mu$ is LS-accessible?
\end{question}

\appendix

\section{Locally multipresentable and polypresentable categories}\label{categ-sec}

We include a category-theoretic aside, concerning approximations of LS-accessibility in large locally multipresentable and locally polypresentable categories.  While these categories are not original to the authors (see \cite{diers} and \cite{lamarche-thesis}, respectively), the basic definitions and essential model-theoretic motivation can be found in \cite{multipres-pams}.  In short, a locally multipresentable category $\ck$ is an accessible category with all connected limits and, provided all of its morphisms are monomorphisms, it is, up to equivalence of categories, a universal $\mu$-AEC (\cite[5.9]{multipres-pams}).  Put another way, a locally multipresentable category is an accessible category with all multicolimits: rather than having a initial---colimit---cocone over each diagram $D$ in $\ck$, there is a family of multiinitial cocones, with the property that any compatible cocone admits a unique map from a unique member of the family. Locally polypresentable categories are accessible categories with wide pullbacks and, provided all of their morphisms are monomorphisms, are, up to equivalence, $\mu$-AECs admitting intersections (\cite[5.7]{multipres-pams}).  They can also be characterized as accessible categories with polycolimits, where the induced maps from the family of cocones described above are unique only up to isomorphism.

We are interested in the following version of the Löwenheim-Skolem theorem. The notion of LS-accessibility is introduced in \cite[2.4]{beke-rosicky} but weak LS-accessibility is new.

\begin{defin}\label{ls-acc-def}
  A category $\ck$ is called \emph{LS-accessible} if there exists a cardinal $\lambda$ such that for all $\lambda' \ge \lambda$, $\ck$ contains an object of internal size exactly $\lambda'$. We call $\ck$ \emph{weakly LS-accessible} if this holds only for all \emph{regular} $\lambda' \ge \lambda$.
\end{defin}

Beke and the second author \cite[4.6]{beke-rosicky} have shown that every large locally presentable category is LS-accessible. A modification of the argument shows:

\begin{thm}\label{lsacc}
Each large locally multipresentable category is weakly LS-accessible.
\end{thm}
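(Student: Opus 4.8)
The plan is to follow the strategy of \cite[4.6]{beke-rosicky} for locally presentable categories, replacing the coproducts used there (which need not exist here) by multicoproducts, and to produce, for every sufficiently large regular cardinal $\lambda$, an object of presentability rank exactly $\lambda^+$ --- equivalently (by the definition of internal size, since $\lambda^+$ is a successor) an object of internal size exactly $\lambda$. Fix a regular $\mu$ for which $\ck$ is $\mu$-accessible; it then has $\mu$-directed colimits and a set of $\mu$-presentable generators. Every regular $\lambda \geq \mu$ will be treated uniformly, so no instance of SCH is needed --- this is what restricts the conclusion to regular sizes only.

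First I would record the easy \emph{upper} bound. If $A$ is a $\mu$-presentable object and $\lambda \geq \mu$ is regular, then every member $P_k$ of the multicoproduct of $\lambda$ copies of $A$ is $\lambda^+$-presentable. Given a map $g \colon P_k \to N$ into a $\lambda^+$-directed colimit $N = \colim_j N_j$, each of the $\lambda$ legs $g \circ \iota_i^k \colon A \to N$ factors through some stage (as $A$ is $\mu$-presentable), and a common upper bound exists because there are only $\lambda$ legs and the system is $\lambda^+$-directed. The resulting cocone into that stage factors, by the multicoproduct property, through a unique component; the one subtlety introduced by working with multi- rather than ordinary colimits is that this component could a priori differ from $P_k$, but the \emph{uniqueness} clause of the universal property, applied back in $N$, forces it to be $P_k$ again, and likewise yields essential uniqueness. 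Hence $r_{\ck}(P_k) \leq \lambda^+$, so $|P_k|_{\ck} \leq \lambda$. The same computation, packaged as in Lemma~\ref{system-rank}, bounds the rank of the colimit of any $\lambda$-indexed diagram of $\mu$-presentables by $\lambda^+$.

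The substance of the proof is the matching \emph{lower} bound: exhibiting an object of rank \emph{exactly} $\lambda^+$, i.e.\ one that is not $\lambda$-presentable. Here the monomorphism hypothesis of Theorem~\ref{system-existence-thm} and Corollary~\ref{mono-special-case} is unavailable, and it genuinely cannot simply be dropped: a multicoproduct of copies of a rigid object may collapse --- in the category of fields, all maps out of $\mathbb{Q}$ into a given field coincide, so the multicoproduct of any number of copies of $\mathbb{Q}$ is just $\mathbb{Q}$ --- so the seed $A$ must be chosen sufficiently non-degenerate. My approach would be to use largeness to manufacture a proper system: fix an object $X$ with $|X|_{\ck} > \lambda$, resolve it as a $\mu$-directed colimit $\langle X_i : i \in I\rangle$ of $\mu$-presentables, and note that Lemma~\ref{system-rank}(2) forces $|I| \geq \lambda$ since $X$ is not $\lambda$-presentable. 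One then builds, inside this resolution, an increasing $\lambda$-chain whose colimit $Y$ is not $\lambda$-presentable --- the key point being that at each of the $<\lambda$ stages one can choose a genuinely new extension, because $X$ itself has not yet been captured (its identity does not factor through any colimit of $<\lambda$ of the $X_i$). I expect this properness step to be the main obstacle: in a general large accessible category such proper extensions need not exist (indeed LS-accessibility is open there), and it is precisely the multicolimit structure --- through the per-component uniqueness of factorizations that already appeared in the upper bound --- that must be leveraged to guarantee, and to detect, non-collapse. Once $Y$ is known to be proper, its rank is squeezed to $\lambda^+$ by the upper bound together with the $\lambda$-directedness of the chain, giving an object of internal size $\lambda$ and hence weak LS-accessibility.
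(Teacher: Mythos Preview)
Your upper bound is fine, but the lower bound has a genuine gap, and it is exactly the one you flag without resolving. You fix $X$ with $|X|_{\ck}>\lambda$, resolve it, and try to extract a proper $\lambda$-chain. The justification you offer for properness --- ``$X$ itself has not yet been captured'' --- is a statement about the \emph{original} system with colimit $X$, not about the subchain with colimit $Y$. Without monomorphisms, properness does not pass to subchains: even if at every stage you choose a ``genuinely new'' extension (a non-split-mono, say), the colimit $Y$ of the chain may still be $\lambda$-presentable, because the transition maps can collapse in the colimit. Your own example of $\mathbb{Q}$ in fields already shows the phenomenon. Saying that ``the multicolimit structure\ldots must be leveraged'' names the tool but supplies no mechanism; as written, the argument would equally ``prove'' weak LS-accessibility for every large accessible category, which is open.

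The paper's route is substantially different and does not attempt to extract a proper chain. It fixes a $\lambda$-presentable $A$ with $U=\ck(A,-)$ taking arbitrarily large values, and for each set $X$ considers the multiinitial family $f_{Xi}:X\to UK_{Xi}$ in the comma category $X\downarrow U$ (which exists because $X\downarrow U$ is accessible with connected limits). The upper bound $|K_{Xi}|_{\ck}\le|X|$ follows much as in your first paragraph. The lower bound is a counting argument: assuming every $K_{Xi}$ has internal size $<|X|$, one shows each $K_{Xi}$ is a split quotient of some $K_{X_k i_k}$ with $|X_k|<\lambda$; then, taking a set $Z$ of size $\kappa=\beth_\lambda(\delta)$ (where $\delta$ bounds $|I_X|$) and counting maps $X\to Z$, one forces enough collisions to conclude that \emph{every} $f_{Zj}$ fails to be a monomorphism --- contradicting the fact that some $f_{Zj}$ must be mono because the values of $U$ are arbitrarily large. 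The multicolimit structure enters through the existence and per-component uniqueness of the $K_{Xi}$, not through controlling properness of a chain.
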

\begin{proof}
Let $\ck$ be a large locally $\lambda$-multipresentable category. There is a $\lambda$-presentable object $A$ such that
the functor $\ck(A,-):\ck\to\Set$ takes arbitrarily large values because, otherwise, $\ck$ would be small. The functor
$U=\ck(A,-)$ preserves connected limits and $\lambda$-directed colimits. For every set $X$, the category $X\downarrow U$ 
is $\lambda$-accessible (see \cite{adamek-rosicky} 2.43) and has connected limits. Therefore, it has a multiinitial set of objects
$f_{Xi}:X\to UK_{Xi}$, $i\in I_X$. At first, we show that $|K_{Xi}|_\ck\leq|X|$ for each $i\in I_X$ and $\lambda\leq|X|$. 
Consider an $|X|^+$-directed colimit $l_j:L_j\to L$, $j\in J$ and a morphism $h:K_{Xi}\to L$. Since $U$ preserves $|X|^+$-directed
colimits, there is $j\in J$ such that $U(h)f_{Xi}=U(l_j)g$ for some $g:X\to UL_j$. Thus $g=U(\bar{g})f_{Xi'}$ for some $i'\in I_X$
and $\bar{g}:K_{Xi'}\to L_j$. Since $l_j\bar{g}:K_{Xi'}\to L$, we have $i'=i$. Then $l_j\bar{g}=h$ and, since this factorization is essentially unique, $K_{Xi}$ is $|X|^+$-presentable. Hence $|K_{Xi}|\leq|X|$.

Let $\lambda, |\ck_\lambda|<|X|$, where $\ck_\lambda$ is the set of morphisms between $\lambda$-presentable objects. We will show
that $|K_{Xi}|_\ck=|X|$ for some $i\in I_X$, which proves that $\ck$ is LS-accessible.
Assume that $|K_{Xi}|_\ck<|X|$ for $i\in I_X$. Let $\mu=\max\{\lambda,|K_{Xi}|_\ck\}^+$. Since $\mu\leq|X|$, $X$ is a $\mu$-directed
colimit of subsets $X_k$ of $X$ of cardinality $|X_k |<\mu$; $u_k:X_k\to X$ are the inclusions. For any $k$, there is a unique
$i_k\in I_{X_k}$ with a morphism $U(\bar{u}_k):K_{X_k,i_k}\to K_{Xi}$ such that $U(\bar{u}_k)f_{X_ki_k}=f_{Xi}u_k$. Analogously,
for each inclusion $u_{kk'}:X_k\to X_{k'}$ there is a morphism $\bar{u}_{kk'}:K_{X_ki_k}\to K_{X_{k'}i_{k'}}$ such that
$U(\bar{u}_{kk'})f_{X_ki_k}=f_{X_{k'}i_{k'}}u_{kk'}$. Morphisms $\bar{u}_{kk'}$ form a $\mu$-directed diagram and 
$\bar{u}_k:K_{X_ki_k}\to K_{Xi}$ a cocone from it. Let $t:\colim K_{X_ki_k}\to K_{Xi}$ be the induced morphism.  
Let $f:X\to U\colim K_{X_ki_k}$ be a unique mapping such that $fu_k=U(d_k)f_{X_ki_k}$ where $d_k$ are components of the colimit cocone. We have $U(t)f=f_{Xi}$ because 
$$
U(t)fu_k=U(t)U(d_k)f_{X_ki_k}=U(\bar{u}_k)f_{X_ki_k}=f_{Xi}u_k
$$
for each $k$. There is a unique $j\in I_X$ and a unique morphism $q:K_{Xj}\to\colim K_{X_ki_k}$ such that $U(q)f_{Xj}=f$. Since
$U(tq)f_{Xj}=U(t)f=f_{Xi}$, we have $j=i$ and $tq=\id_{K_{Xi}}$. Since
$$
U(q)U(\bar{u}_k)f_{X_ki_k}=U(q)f_{Xi}u_k=fu_k=U(d_k)f_{X_ki_k}
$$
we have $q\bar{u}_k=d_k$ and thus $qtd_k=q\bar{u}_k=d_k$. Hence $qt=\id_{\colim K_{X_ki_k}}$ and $t$ is an isomorphism with the inverse morphism $q$. Hence $\bar{u}_k:K_{X_ki_k}\to K_{Xi}$ is a colimit cocone. Thus $U(\bar{u}_k):UK_{X_ki_k}\to UK_{Xi}$ 
is a colimit cocone. Since $K_{ Xi}$ is $\mu$-presentable, there is $k$ and a morphism $r:K_{Xi}\to K_{X_ki_k}$ such that 
$\bar{u}_kr=\id_{K_{Xi}}$. 

Let $\delta=|I_X|$ and $\kappa=\beth_\lambda(\delta)$. Let $|Z|=\kappa$. Since 
$\cf{\kappa}=\lambda$, we have $|Z^X|=\kappa^\lambda>\kappa$. Choose $j\in I_Z$. For each $p:X\to Z$, there is $i_p\in I_X$ 
and $\bar{p}:K_{Xi_p}\to K_{Zj}$ such that $U(\bar{p})f_{Xi_p}=f_{Zj}p$. Since  $\delta<\kappa$, there is a subset $\cp\subseteq Z^X$ of cardinality $|\cp|>\kappa$ such that $i_p=i_q$ for each $p,q\in\cp$. Denote this common value of $i_p$ by $i$.
As shown above, there is a subset $X_k$ of $X$ of cardinality $\nu<\lambda$ such that $\bar{u}_k:K_{X_ki_k}\to K_{Xi}$ is a split
epimorphism. Since $\kappa^\nu=\kappa$, there is a subset $\cq\subset\cp$ such that $pu_k=qu_k$ for each $p,q\in\cq$. For $p,q\in\cq$,
we have
$$
U(\bar{p})U(\bar{u}_k)f_{X_ki_k}= U(\bar{p})f_{Xi}u_k=f_{Zj}pu_k=f_{Zj}qu_k=U(\bar{q})f_{Xi}u_k=U(\bar{q})U(\bar{u}_k)f_{X_ki_k}
$$
and thus $\bar{p}\bar{u}_k=\bar{q}\bar{u}_k$. Hence $\bar{p}=\bar{q}$. Since 
$$
f_{Zj}p=U(\bar{p})f_{Xi}=U(\bar{q})f_{Xi}=f_{Zj}q
$$
$f_{Zj}$ is not a monomorphism.
 
Since the sets $UM$ are arbitrarily large, there is a monomorphism $v:Z\to UM$. Since $v$ factorizes through $Uf_{Zj}$ for some
$j\in I_Z$, this $f_{XZj}$ is a monomorphism. Thus $|K_{Xi}|_\ck=|X|$.
\end{proof}

Assuming that all morphisms are monomorphisms, we can generalize the argument further to locally polypresentable categories:

\begin{thm}\label{lsacc1}
Each large locally polypresentable category with all morphisms mo\-no\-mor\-phisms is weakly LS-accessible.
\end{thm}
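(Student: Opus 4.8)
The plan is to run the proof of Theorem~\ref{lsacc} essentially line by line, with \emph{multiinitial} families systematically replaced by \emph{polyinitial} families, and with the hypothesis that all morphisms are monomorphisms doing the work that the stronger connected-limit structure did in the multipresentable case. As there, let $\ck$ be large locally $\lambda$-polypresentable, fix a $\lambda$-presentable object $A$ for which $U = \ck(A,-) \colon \ck \to \Set$ takes arbitrarily large values (such $A$ exists, else $\ck$ would be small), and note that $U$ preserves $\lambda$-directed colimits and, being representable, all limits that exist in $\ck$---in particular wide pullbacks. For each set $X$ the comma category $X \downarrow U$ is then $\lambda$-accessible (\cite[2.43]{adamek-rosicky}) and has wide pullbacks, hence is locally polypresentable; I would extract from it a polyinitial family $f_{Xi} \colon X \to U K_{Xi}$, $i \in I_X$, in place of the multiinitial set used before.

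The two quantitative steps---first that $|K_{Xi}|_\ck \le |X|$ for $\lambda \le |X|$, and then that $|K_{Xi}|_\ck = |X|$ for some $i$ whenever $|X|$ is large---are carried out exactly as in Theorem~\ref{lsacc}. The only structural difference is that each appeal to the \emph{uniqueness} of a factorization through the multiinitial family now yields uniqueness only \emph{up to isomorphism} of the relevant $K_{Xi}$. Here the monomorphism hypothesis is essential: since every connecting map, every comparison map $t, q$, and every transition $\bar u_{kk'}$ is monic, the isomorphisms supplied by polyinitiality are forced to be compatible with the structure maps, so that ``unique up to isomorphism'' upgrades to the essential uniqueness demanded by the definition of presentability. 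Thus, for instance, in verifying that $K_{Xi}$ is $|X|^+$-presentable one obtains $g = U(\bar g) f_{Xi'}$, uses polyinitiality to identify $K_{Xi'}$ with $K_{Xi}$ over $X$, and invokes monicity to transport $\bar g$ to the required factorization; the argument that $t$ is an isomorphism with inverse $q$ is handled the same way.

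The concluding counting argument needs no change. Setting $\delta = |I_X|$ and $\kappa = \beth_\lambda(\delta)$ and exploiting $|Z^X| > \kappa$, the pigeonhole passages producing $\cp$ and then $\cq$ yield distinct $p, q \colon X \to Z$ with $\bar p \bar u_k = \bar q \bar u_k$; since $\bar u_k$ is a split epimorphism this forces $\bar p = \bar q$, hence $f_{Zj} p = f_{Zj} q$, so $f_{Zj}$ is not injective. As in Theorem~\ref{lsacc}, a genuine set-injection $v \colon Z \to U M$ (available because the sets $UM$ are arbitrarily large) must factor through some $f_{Zj}$, forcing that $f_{Zj}$ to be injective---a contradiction. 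Hence $|K_{Xi}|_\ck = |X|$ for some $i$, and, letting $|X|$ range over the sufficiently large regular cardinals, this gives weak LS-accessibility.

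I expect the main obstacle to be precisely the bookkeeping of the second paragraph: verifying, at each of the several points where Theorem~\ref{lsacc} silently used a \emph{unique} factorization, that replacing it by a factorization unique only up to a compatible isomorphism still yields the stated conclusion, and that the ubiquity of monomorphisms in $\ck$ is exactly what pins those isomorphisms down. Everything else---the accessibility of $X \downarrow U$, the preservation properties of $U$, and the cardinal arithmetic---transfers without modification.
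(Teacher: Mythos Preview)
Your overall strategy is right and matches the paper's: rerun Theorem~\ref{lsacc} with polyinitial families in place of multiinitial ones and let the monomorphism hypothesis absorb the ``up to isomorphism'' slack. Your treatment of the first two steps is in agreement with the paper's (there one explicitly adjusts the transition maps by isomorphisms $h_{kk'}$ so that $\tilde u_{kk'} = \bar u_{kk'} h_{kk'}$ form a genuine $\mu$-directed diagram, and one uses that a split epimorphism which is also monic is an isomorphism, so each $\bar u_k$ ends up an \emph{isomorphism}, not merely a split epi).

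However, your claim that ``the concluding counting argument needs no change'' and that one may keep $\delta = |I_X|$ is a genuine gap. From
\[
U(\bar p)\,U(\bar u_k)\,f_{X_k i_k} \;=\; U(\bar q)\,U(\bar u_k)\,f_{X_k i_k}
\]
you \emph{cannot} conclude $\bar p\,\bar u_k = \bar q\,\bar u_k$ in the polyinitial setting: both sides are factorizations of the same map $X_k \to U K_{Zj}$ through $f_{X_k i_k}$, and polyinitiality yields equality only up to an automorphism of $K_{X_k i_k}$. Monomorphicity alone does not eliminate that automorphism---an object can have many automorphisms even when every morphism in the category is monic. The paper handles this by enlarging $\delta$ to also dominate $\sup_{i \in I_X} \iota_i$, where $\iota_i$ counts the automorphisms of $K_{X_k i_k}$, and then inserting an \emph{additional} pigeonhole: refine $\cq$ to a subset $\cq_0$ on which the accompanying automorphisms $h_q$ coincide, and only then deduce $\bar p = \bar q$ for $p,q \in \cq_0$. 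Without this extra step your contradiction does not fire, so the argument as you have written it does not close.
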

\begin{proof}
We will follow the proof of \ref{lsacc}. The first paragraph is unchanged, only the essential unicity at the end follows
from the fact that morphisms in $\ck$ are monomorphisms. In the second paragraph, we do not know that 
$\bar{u}_{k'}\bar{u}_{kk'}=\bar{u}_k$. But we know that there is an isomorphism $h_{kk'}:K_{X_ki_k}\to K_{X_ki_k}$ such that
$\bar{u}_{k'}\bar{u}_{kk'}h_{kk'}=\bar{u}_k$. Since $\ck$-morphisms are monomorphisms, $\tilde{u}_{kk'}=\bar{u}_{kk'}h_{kk'}$ 
form a $\mu$-directed diagram and $\bar{u}_k:K_{X_ki_k}\to K_{Xi}$ a cocone from it. Let $t:\colim K_{X_ki_k}\to K_{Xi}$ 
be the induced morphism and $f:X\to U\colim K_{X_ki_k}$ the induced mapping. We have $U(t)f=f_{Xi}$ because 
$$
U(t)fu_k=U(t)U(d_k)f_{X_ki_k}=U(\bar{u}_k)f_{X_ki_k}=f_{Xi}u_k
$$
for each $k$. Let $q:K_{Xi}\to\colim K_{X_ki_k}$ be a morphism such that $U(q)f_{Xi}=f$. We have $U(tq)f_{Xi}=f_{Xi}$. Thus 
there is an isomorphism $h:K_{Xi}\to K_{Xi}$ such that $tq=h\cdot\id_{K_{Xi}}=h$. Thus $t$ is a split epimorphism and,
since it is a monomorphism, it is an isomorphism. Hence $\bar{u}_k:K_{X_ki_k}\to K_{Xi}$ is a colimit cocone.
Since $K_{Xi}$ is $\mu$-presentable, there is $k$ and a morphism $r:K_{Xi}\to K_{X_ki_k}$ such that $\bar{u}_kr=\id_{K_{Xi}}$.
As a split epimorphism, $\bar{u}_k$ is an isomorphism.  

As in the proof of \ref{lsacc}, let $\alpha=|X|$ be regular. Let $\iota_i$ be the number of isomorphisms of $K_{X_ki_k}$, $i\in I_X$.
Now, $\delta$ will be the maximum of $|I_X|$ and $sup_{i\in I_X}\iota_i$. For $p,q\in\cq$, we do not get 
$\bar{p}\bar{u}_k=\bar{q}\bar{u}_k$, but we get an isomorphism $h_q:K_{X_ki_k}\to K_{X_ki_k}$ such that $\bar{p}=h_q\bar{q}$. 
Since $\cq>\iota_i$, there is a subset $\cq_o$ of $\cq$ such that $h_{q_1}=h_{q_2}$ for $q_,q_2\in\cq_0$. Hence $\bar{p}=\bar{q}$
for $p,q\in\cq_0$.  
\end{proof}

We note that this amounts to an alternative---purely category-theoretic---proof of Corollary~\ref{inter-succ}.

\bibliographystyle{amsalpha}
\bibliography{categ-accessible}

\end{document}